\numberwithin{equation}{section}
\newtheorem{Theorem}{Theorem}[section]
\newtheorem{Definition}[Theorem]{Definition}
\newtheorem{Proposition}[Theorem]{Proposition}
\newtheorem{Lemma}[Theorem]{Lemma}
\newtheorem{Assumption-Notation}[Theorem]{Assumption-Notation}
\newtheorem{Remark}[Theorem]{Remark}
\newtheorem{Corollary}[Theorem]{Corollary}
\newtheorem{Claim}[Theorem]{Claim}
\newtheorem{Fact}[Theorem]{Fact}
\newtheorem{Example}[Theorem]{Example}
\def\dim{\operatorname{dim}}
\begin{document}

\title[On the bicanonical map of primitive varieties with $q(X) = \dim X$]{On the bicanonical map of primitive varieties with $q(X) = \dim X$: the degree and the Euler number}
\address{Lei Zhang\\College of Mathematics and information Sciences\\Shaanxi Normal University\\Xi'an 710062\\P.R.China}
\email{lzhpkutju@gmail.com}
\author{Lei Zhang}
\begin{abstract}
Let $X$ be a variety of maximal Albanese dimension and of general type. Assume that $q(X) = \dim X$, the Albanese variety $\mathrm{Alb} (X)$ is a simple abelian variety, and the bicanonical map is not birational. We prove that the Euler number $\chi(X, \omega_X)$ is equal to 1, and $|2K_X|$ separates two distinct points over the same general point on $\mathrm{Alb} (X)$ via $\mathrm{alb}_X$ (Theorem \ref{main}).
\end{abstract}

\maketitle

\section{Introductions}\label{intro}

Let $X$ be a smooth complex projective variety of maximal Albanese dimension (m.A.d. for short) and of general type. Recall that tricanonical map is birational onto its image (cf. \cite{CH2} and \cite{JMT}). It is interesting to consider the birationality of
its bicanonical map. Let us recall the following results.

Assume moreover that the bicanonical map of $X$ is not birational. Then
\begin{itemize}
\item[I]{If $X$ is a surface, then either $X$ is fibered by curves of genus 2 (the standard case), or
\begin{enumerate}
\item[(i)]{if $q(X) >2$, then $X$ is birationally equivalent to a theta divisor of a principally polarized abelian variety (p.p.a.v. for short) of dimension 3 (cf. \cite{CCM});}
\item[(ii)]{if $q(X) = 2$, then $X$ is birational to a double cover of a simple principally polarized abelian surface $A$ branched
along a divisor $B \in |2\Theta|$ (cf. \cite{CCM}, \cite{CFM}, \cite{CM}).}
\end{enumerate}}
\item[II]{\begin{enumerate}
\item[(i)]{If $X$ is a primitive variety (cf. Def. \ref{prm}) with $q(X) > \dim X$, then it
is birational to a theta divisor of a p.p.a.v. (cf. \cite{BLNP}).}
\item[(ii)]{If $X$ is not necessarily primitive, then $\mathrm{gv}(\omega_X) \leq 1$, and the Albanese image is fibred by subvarieties
of codimension at most 1 of an Abelian subvariety of $\mathrm{Alb}(X)$ (cf. \cite{La}).}
\end{enumerate}}
\end{itemize}

If $X$ has a fibration $f: X \rightarrow Y$ with general fibers having non-birational bicanonical map, then the bicanonical map of $X$ is not birational. It is known that a non-primitive variety always has an irregular fibration by generic vanishing theorem (cf. Theorem \ref{gv}). Therefore, it is of special interest to study the bicanonical map of primitive varieties or those with simple Albanese varieties.

For the bicanonical map of primitive varieties, when $q(X) > \dim X$, it is completely clear by the results of I(i) and II(i); when $q(X) = \dim X$, it is not clear yet except in dimension 2 (I(ii)), and it is conjectured that if $\mathrm{Alb} (X)$ is simple, then $\mathrm{Alb}(X)$ is a p.p.a.v., and $X$ is birational to a double cover of $\mathrm{Alb}(X)$ branched
along a divisor $B \in |2\Theta|$ (see also \cite{La}).

In this paper, we study the case $q(X) = \dim X$, the main result is
\begin{Theorem}[Theorem \ref{eun}, \ref{spr2}]\label{main}
Let $X$ be a smooth complex projective variety of general type with $q(X) = \dim X$ and maximal Albanese dimension. Suppose that its bicanonical map is not birational and that $\mathrm{Alb}(X)$ is simple. Then $\chi(\omega_X) = 1$, and the linear system $|2K_X|$ separates two distinct points over the same general point $\mathrm{Alb} (X)$ via the Albanese map.
\end{Theorem}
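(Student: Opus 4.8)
The plan is to push the whole argument through the sheaf $\cF := \mathrm{alb}_{X*}\omega_X$ on $A := \mathrm{Alb}(X)$, using that $q(X) = \dim X$ and maximal Albanese dimension make $\mathrm{alb}_X$ generically finite and surjective of some degree $d$. First I would apply the decomposition theorem for pushforwards of canonical sheaves together with the hypothesis that $A$ is simple: since the only abelian quotients of a simple $A$ are $0$ and $A$, every summand lives on $A$ itself, so
\[
\cF \;\cong\; \Big(\bigoplus_{k} \cF_k \otimes Q_k\Big)\ \oplus\ \Big(\bigoplus_{j} Q_j^{\oplus a_j}\Big),
\]
where each $\cF_k$ is $M$-regular on $A$ and $Q_k, Q_j$ are torsion in $\mathrm{Pic}^0(A)$. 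The trivial-type summands $Q_j^{\oplus a_j}$ contribute $0$ to the Euler characteristic, so $\chi(\omega_X) = \sum_k \chi(\cF_k)$, a sum of positive integers. By generic vanishing (Theorem \ref{gv}) $\chi(\omega_X) \geq 0$, and the value $0$ is excluded because it would yield an irregular fibration of $X$ over a positive-dimensional proper abelian quotient of $A$, contradicting simplicity; hence there is at least one $M$-regular summand and $\chi(\omega_X) \geq 1$.

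For the equality $\chi(\omega_X) = 1$ I would argue by contraposition: assuming $\chi(\omega_X) \geq 2$, I aim to show the bicanonical map is birational. The mechanism is that sections of $2K_X$ are produced by the multiplication maps $H^0(\omega_X \otimes \alpha) \otimes H^0(\omega_X \otimes \alpha^{-1}) \to H^0(\omega_X^{\otimes 2})$ for $\alpha \in \mathrm{Pic}^0(X)$, and $M$-regularity of the $\cF_k$ makes $\cF$ continuously globally generated away from the locus of the trivial summands. When $\chi(\omega_X) \geq 2$ the Fourier–Mukai transform of $\cF$ has generic rank $\geq 2$, giving at least two independent $M$-regular directions whose products separate general points; via the Pareschi–Popa criterion this forces birationality. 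Consequently, under non-birationality there is exactly one $M$-regular summand $\cF$ and $\chi(\cF) = 1$, so $\cF$ is a theta-like sheaf of transform-rank one and $\chi(\omega_X) = 1$.

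With $\chi(\omega_X) = 1$ in hand, the separation statement becomes a cohomological vanishing: for a general $a \in A$ with fibre $\mathrm{alb}_X^{-1}(a) = \{x_1, \dots, x_d\}$ I want the evaluation $H^0(X, 2K_X) \to \bigoplus_i (2K_X)|_{x_i}$ to have rank at least two along the fibre, i.e.\ a section of $2K_X$ not constant on the fibre that distinguishes two of the $x_i$. By Serre duality and the decomposition of $\mathrm{alb}_{X*}\omega_X^{\otimes 2}$ this reduces to $H^1\!\big(A, \mathrm{alb}_{X*}\omega_X^{\otimes 2} \otimes \cI_{\{x_i,x_j\}} \otimes P\big) = 0$ for general $P \in \mathrm{Pic}^0(A)$, which I would establish from the $M$-regularity above together with the isomorphism $\mathrm{alb}_X^* : \mathrm{Pic}^0(A) \xrightarrow{\sim} \mathrm{Pic}^0(X)$, running a continuous global generation argument twisted into a general fibre.

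The main obstacle I expect is precisely the second paragraph's dichotomy: translating the numerical hypothesis $\chi(\omega_X) \geq 2$ into genuine separation of points lying \emph{in the same} Albanese fibre, since pullback sections from $A$ can never separate such points and one must instead exploit the $M$-regular summands fibrewise. Controlling the trivial summands $Q_j^{\oplus a_j}$ — which carry no generic sections yet can still appear in $\mathrm{alb}_{X*}\omega_X^{\otimes 2}$ and obstruct the evaluation — is the delicate point, and it is here that simplicity of $A$ and sharp generic-vanishing index estimates must be combined.
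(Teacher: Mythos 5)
Your first paragraph is fine (for simple $\mathrm{Alb}(X)$ and $X$ of general type, $\chi(\omega_X)\geq 1$ is standard), but the core of the theorem is the inequality $\chi(\omega_X)\leq 1$, and there your argument has a genuine gap. The step ``when $\chi(\omega_X)\geq 2$ the Fourier--Mukai transform has generic rank $\geq 2$, giving two independent M-regular directions whose products separate general points; via the Pareschi--Popa criterion this forces birationality'' is not a valid deduction: no such criterion exists, and it cannot exist in this form, because CGG/M-regularity of $a_*\omega_X$ is exactly what holds for a theta divisor, whose bicanonical map has degree $2$. The true obstruction, which your proposal never confronts, is that the movable part $|M|$ of the paracanonical systems $|\omega_X\otimes\alpha|$ (of dimension $\chi(\omega_X)-1\geq 1$) may be composed with a pencil, in which case no continuous products of paracanonical sections can separate points lying in the same member of that pencil. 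The paper's proof of Theorem \ref{eun} is built entirely around this scenario: Lemma \ref{pre} shows $R^0\Phi_{\mathcal{P}_a}(\omega_X)\cong\mathcal{O}_{\hat A}(-\hat D)^{\oplus\chi(\omega_X)}$ with $|M|$ independent of $\alpha$; if $\chi(\omega_X)\geq 2$, the pencil inside $|M|$ gives (after Stein factorization, using simplicity of $A$) a fibration $g:X\rightarrow Y$ over a rational curve whose general fiber $M'$ inherits a non-birational bicanonical map; the hard part (Steps 1--3, using the universal divisor $\mathcal{K}$, Proposition \ref{ir}, Lemma \ref{embd}, Proposition \ref{cnm} and Corollary \ref{ref}) shows $M'$ is birational to a theta divisor; and then Theorem \ref{cp} forces $X$ to be birational to $M'\times\mathbb{P}^1$, contradicting general type. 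Your proposal contains no substitute for this chain, and a purely ``fibrewise CGG'' argument cannot replace it.

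The separation statement also does not follow as you suggest. The vanishing $H^1\bigl(A,\,a_*\omega_X^{\otimes 2}\otimes\mathcal{I}_{\{x_i,x_j\}}\otimes P\bigr)=0$ is asserted rather than proved, and even if it held for general $P\in\mathrm{Pic}^0(A)$ it would only give separation by $|2K_X\otimes a^*P|$ for general $P$, not by $|2K_X|$ itself; passing from the twisted statement to the untwisted one is precisely where continuous-generation arguments require extra structure. The paper's route (Theorem \ref{spr2}) is different and shorter: let $Z$ be the image of $(a\times\phi)$; if separation failed, $X\rightarrow Z$ would be generically finite of degree $\geq 2$ with $\mathrm{Pic}^0(Z)\hookrightarrow\mathrm{Pic}^0(X)$, so the strict Euler-number inequality of Theorem \ref{euln} together with $\chi(\omega_X)=1$ forces $\chi(\omega_Z)=0$, hence $Z$ is birational to $A$, hence $\phi$ factors rationally through $a$ --- contradicting Proposition \ref{cnm}. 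You would need either to supply a proof of your $H^1$-vanishing and of the descent from $|2K_X\otimes a^*P|$ to $|2K_X|$, or to adopt an argument of this comparison type.
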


This paper is organized as follows. In Section \ref{tool}, we list the technical results needed in this paper. In Section \ref{map}, we compare the Euler numbers of two irregular varieties of m.A.d. and equipped with a generically finite surjective morphism. In Section \ref{bicmap}, we study the bicanonical map and prove our main theorem. Finally, in Section \ref{inequ} as an appendix, we give an inequality on the irregularity of a fibration, and describe a certain fibration with the equality attained.

\textbf{Conventions:}

All varieties are assumed over $\mathbb{C}$.

``$\equiv$'' denotes the linear equivalence of line bundles or Cartier divisors respectively.

Let $E$ be a vector bundle on a variety $X$. We denote the projective bundle by $\mathbb{P}_X(E):=\mathrm{Proj}_{\mathcal{O}_X}(\oplus_kS^k(E^*))$ and the tautological line bundle by $\mathcal{O}_{\mathbb{P}_X(E)}(1)$.

Let $X$ be a projective variety. We denote by $D^b(X)$ be the
bounded derived category of coherent sheaves on $X$. Let $f: X \rightarrow Y$ be a morphism between two
smooth projective varieties. We denote by $Rf_*$ and $Lf^*$
the derived functors of $f_*$ and $f^*$ respectively.
We say an object $E \in D^b(X)$ is a sheaf if it is quasi-isomorphic to ($\cong$) a sheaf in $D^b(X)$.

For a product $X = X_1 \times X_2 \times ... \times X_r$ of $r$ varieties, $p_i$ denotes the projection from $X$ to the $i$\textsuperscript{th} factor $X_i$.

For an abelian variety $A$, $\hat{A}$ denotes its dual $\mathrm{Pic}^0 (A)$, $\mathcal{P}$ denotes the Poincar\'{e} line bundle on
$A \times \hat{A}$, and the Fourier-Mukai transform $R\Phi_{\mathcal{P}}: D^b(A) \rightarrow D^b(\hat{A})$ w.r.t. $\mathcal{P}$ is defined as
$$R\Phi_{\mathcal{P}}(\mathcal{F}) := R(p_2)_*(Lp_1^*\mathcal{F} \otimes \mathcal{P});$$
similarly $R\Psi_{\mathcal{P}}: D^b(\hat{A}) \rightarrow D^b(A)$ is defined as
$$R\Psi_{\mathcal{P}}(\mathcal{F}) := R(p_1)_*(Lp_2^*\mathcal{F} \otimes \mathcal{P}).$$
Since $p_i, i =1,2$ are flat morphisms, $R\Phi_{\mathcal{P}}$ and $R\Psi_{\mathcal{P}}$ are two right derived functors.

If $a: X \rightarrow A$ is a map to an abelian variety, then $\mathcal{P}_a : = (a \times \mathrm{id}_{\hat{A}}) ^*\mathcal{P}$, and for $\mathcal{F} \in D^b(X)$, $R\Phi_{\mathcal{P}_a}(\mathcal{F})$ is defined similarly; and if $\alpha \in \hat{A}$, we often denote the line bundle $a^*\alpha \in \mathrm{Pic}^0 (X)$ by $\alpha$ for simplicity. For an irregular variety $X$, we usually denote by $\mathrm{alb}_X: X \rightarrow
\mathrm{Alb} (X)$ the Albanese map.

{\bf Acknowledgements.} Part of this note appears in the author's
doctoral thesis submitted to Peking University (2011). The author expresses appreciations to Prof. Jinxing Cai and Dr. Wenfei Liu for many useful discussions. He thanks Prof. Meng Chen, Dr. Fan Peng and Ze Xu for their help on the inequality appearing in the appendix, and thanks Olivier Debarre and Yifei Chen for their suggestions in improving the English. He also thanks Sofia Tirabassi for some suggestions, and thanks the authors of \cite{BLNP} for their stimulating ideas. Finally, the author owes too much to an anonymous referee, who shares his or her ideas on improving the result of Theorem \ref{euln} and simplifying the proof of Corollary \ref{fm} and Theorem \ref{euln}. The author is supported by NSFC (No. 11226075).

\section{Definitions and technical results}\label{tool}

In this section, we collect some definitions and results needed in the sequel. First recall that

\begin{Theorem}[\cite{Mu} Thm. 2.2]\label{Mu} Let $A$ be an abelian variety of dimension $d$. Then
$$R\Psi_{\mathcal{P}} \circ R\Phi_{\mathcal{P}} = (-1)_A^*[-d]~\mathrm{and}~R\Phi_{\mathcal{P}} \circ R\Psi_{\mathcal{P}} = (-1)_{\hat{A}}^*[-d].$$
\end{Theorem}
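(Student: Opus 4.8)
The statement to be proved is Mukai's inversion theorem: for an abelian variety $A$ of dimension $d$, one has $R\Psi_{\mathcal{P}} \circ R\Phi_{\mathcal{P}} = (-1)_A^*[-d]$ and $R\Phi_{\mathcal{P}} \circ R\Psi_{\mathcal{P}} = (-1)_{\hat{A}}^*[-d]$. Since the two identities are symmetric under the interchange of $A$ and $\hat{A}$ (using the canonical identification $\hat{\hat{A}} \cong A$), I will prove the first and obtain the second by duality. My plan is to compute the composite functor directly as an integral transform on the triple product $A \times \hat{A} \times A$, reducing everything to a base-change / projection-formula computation together with one genuinely nontrivial cohomological input about the Poincar\'{e} bundle.

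\textbf{Setting up the composite as a single kernel.}
First I would write $R\Psi_{\mathcal{P}} \circ R\Phi_{\mathcal{P}}$ as a Fourier--Mukai transform with kernel on $A \times A$. Introduce the triple product $Y = A_1 \times \hat{A} \times A_2$ with the three projections $\pi_{13}, \pi_{12}, \pi_{23}$ to the pairwise products (here $A_1, A_2$ are two copies of $A$). For $\mathcal{F} \in D^b(A_1)$, by definition $R\Phi_{\mathcal{P}}(\mathcal{F})$ lives on $\hat{A}$ and then $R\Psi_{\mathcal{P}}$ sends it to $A_2$. Using flat base change and the projection formula I would show that the composite equals $R(\pi_{13})_*\bigl(L\pi_{13}^*\mathcal{F} \otimes \mathcal{K}\bigr)$, where the composite kernel is
\begin{equation*}
\mathcal{K} = R(\pi_{13})_*\bigl(\pi_{12}^*\mathcal{P}_{12} \otimes \pi_{23}^*\mathcal{P}_{23}\bigr),
\end{equation*}
with $\mathcal{P}_{12}$ the Poincar\'{e} bundle on $A_1 \times \hat{A}$ and $\mathcal{P}_{23}$ that on $\hat{A} \times A_2$. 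The standard compatibility of kernel composition with functor composition makes this reduction formal; the whole theorem then amounts to identifying $\mathcal{K}$ on $A_1 \times A_2$.

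\textbf{The main computation and the key obstacle.}
The heart of the matter is to compute $\mathcal{K}$, which is $R p_*\bigl(\mathcal{P}_{12} \otimes \mathcal{P}_{23}\bigr)$ where $p: A_1 \times \hat{A} \times A_2 \to A_1 \times A_2$ forgets the $\hat{A}$ factor. Along each fiber $\{(x_1, x_2)\}$ the restriction of $\mathcal{P}_{12} \otimes \mathcal{P}_{23}$ is the line bundle $\mathcal{P}_{x_1} \otimes \mathcal{P}_{x_2}$ on $\hat{A}$, which by the seesaw/biextension structure of the Poincar\'{e} bundle is $\mathcal{P}_{x_1 + x_2}$ (the point of $A = \hat{\hat{A}}$ corresponding to $x_1 + x_2 \in A$). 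Thus on the diagonal antidiagonal locus $x_1 + x_2 = 0$ this line bundle is trivial, and off it it is a nontrivial topologically trivial line bundle on $\hat{A}$. The key cohomological input --- which I expect to be the main obstacle --- is the vanishing
\begin{equation*}
H^i(\hat{A}, \mathcal{P}_\alpha) = 0 \quad \text{for all } i \text{ whenever } \alpha \neq 0,
\end{equation*}
together with $H^i(\hat{A}, \mathcal{O}_{\hat{A}}) = \wedge^i H^1$ being $\binom{d}{i}$-dimensional when $\alpha = 0$. By cohomology-and-base-change this forces $\mathcal{K}$ to be supported scheme-theoretically on the antidiagonal $\{x_1 = -x_2\}$, i.e. on the graph of $(-1)_A$. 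Establishing this vanishing for nontrivial $\mathcal{P}_\alpha$ is the crux: it is where the special geometry of abelian varieties (as opposed to arbitrary compact K\"{a}hler manifolds) enters, and I would prove it by noting that translation by a point of $A$ acts on $\mathcal{P}_\alpha$ while acting trivially on cohomology, forcing the Euler characteristic and indeed every $H^i$ to vanish unless $\alpha$ is fixed, i.e. $\alpha = 0$.

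\textbf{Pinning down the shift and the $(-1)$.}
Once $\mathcal{K}$ is concentrated on the antidiagonal, I would identify it precisely as $\delta_* \mathcal{O}_A [-d]$, where $\delta: A \hookrightarrow A_1 \times A_2$ is the antidiagonal $x \mapsto (x, -x)$. The cohomological degree shift $[-d]$ comes from the top nonvanishing $R^d p_*$ at the one point $\alpha = 0$ of $\hat{A}$ (top cohomology $H^d(\hat{A}, \mathcal{O}) \cong \mathbb{C}$), and the absence of lower $R^i p_*$ is exactly the base-change consequence of the vanishing above. Feeding $\mathcal{K} = \delta_*\mathcal{O}_A[-d]$ back into $R(\pi_{13})_*(L\pi_{13}^*\mathcal{F} \otimes \mathcal{K})$ and using the projection formula collapses the transform to pullback along the antidiagonal, yielding $(-1)_A^*\mathcal{F}[-d]$. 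This establishes the first identity; applying it with $A$ replaced by $\hat{A}$ and using $(-1)_{\hat{A}}^* = \widehat{(-1)_A^*}$ under $\hat{\hat{A}} \cong A$ gives the second, completing the proof.
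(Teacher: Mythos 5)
The paper does not actually prove this statement: Theorem \ref{Mu} is quoted from Mukai's paper [Mu, Thm.\ 2.2] without proof, so your proposal can only be measured against Mukai's classical argument. Your outline is indeed that argument: compose the two transforms into a single integral functor with kernel $\mathcal{K} = R(\pi_{13})_*\bigl(\pi_{12}^*\mathcal{P}_{12}\otimes\pi_{23}^*\mathcal{P}_{23}\bigr)$ on $A_1\times A_2$, use the homomorphism property $\mathcal{P}_{x_1}\otimes\mathcal{P}_{x_2}\cong\mathcal{P}_{x_1+x_2}$ of the Poincar\'{e} bundle, invoke the classical vanishing $H^i(\hat{A},\mathcal{P}_\alpha)=0$ for all $i$ when $\alpha\neq 0$, and identify $\mathcal{K}$ with $\delta_*\mathcal{O}_A[-d]$ for the antidiagonal $\delta$. (Two small blemishes: the composite transform should be $R(p_2)_*\bigl(p_1^*\mathcal{F}\otimes^L\mathcal{K}\bigr)$ with $p_1,p_2$ the projections of $A_1\times A_2$, not another push--pull along $\pi_{13}$; and your sketch of the vanishing via ``translation by a point of $A$'' is garbled --- translations of $\hat{A}$ act on bundles over $\hat{A}$, and the standard proofs are Mumford's K\"{u}nneth/multiplication trick or the theta-group argument. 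The vanishing itself is classical and citable.)

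The genuine gap is the step you pass off as routine: ``by cohomology-and-base-change this forces $\mathcal{K}$ to be supported scheme-theoretically on the antidiagonal,'' and ``the absence of lower $R^ip_*$ is exactly the base-change consequence of the vanishing above.'' Neither claim follows from base change. Over a point \emph{on} the antidiagonal the fibre of the kernel is $\mathcal{O}_{\hat{A}}$, whose cohomology $H^i(\hat{A},\mathcal{O}_{\hat{A}})\cong\wedge^iH^1$ is nonzero in \emph{every} degree $0\le i\le d$; so fibrewise vanishing off the antidiagonal gives only set-theoretic support there and says nothing about killing $R^ip_*$ for $i<d$. Moreover, even once concentration in degree $d$ is known, base change in the top degree plus Nakayama only shows that $R^dp_*$ is a cyclic module, i.e.\ $\mathcal{O}_Z$ for some subscheme $Z$ with $Z_{\mathrm{red}}$ equal to the antidiagonal --- nothing so far excludes an infinitesimal thickening (note that $\mathbb{C}[x]/(x^2)$ is cyclic and even self-dual, so duality alone cannot finish either). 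Closing these two gaps is the real content of Mukai's proof: the vanishing of the lower direct images follows from Grothendieck--Verdier duality along the projection combined with $\mathcal{P}^{-1}\cong(\mathrm{id}\times(-1))^*\mathcal{P}$, and the reducedness (length one) requires the first-order computation that the Kodaira--Spencer class of $\mathcal{P}$ is an isomorphism, so that the relevant Koszul-type (derivative) complex is exact --- equivalently Mumford's formal-functions computation of $R^ip_*\mathcal{P}$ in \emph{Abelian Varieties}, \S 13. A cleaner organization, which is Mukai's, is to note that the seesaw principle gives $\pi_{12}^*\mathcal{P}_{12}\otimes\pi_{23}^*\mathcal{P}_{23}\cong(m\times\mathrm{id}_{\hat{A}})^*\mathcal{P}$ with $m:A_1\times A_2\to A$ the addition map, so that flat base change reduces the whole theorem to the single statement $Rp_{1*}\mathcal{P}\cong\mathbb{C}(0)[-d]$ on one abelian variety; your fibre-by-fibre computation of $\mathcal{K}$ obscures the fact that this is exactly the statement that still has to be proved.
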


\subsection{GV-sheaves, M-regular sheaves and $IT^0$-sheaves}
\begin{Definition}[\cite{PP2} Def. 2.1, 2.2, 2.8, 2.10, \cite{CH2} Def. 2.6]\label{defgv}
Given a coherent sheaf $\mathcal{F}$ on an abelian variety $A$, its \emph{$i$\textsuperscript{th} cohomological support locus} is defined as
$$V^i(\mathcal{F}): = \{\alpha \in \mathrm{Pic}^0 (A)| h^i(\mathcal{F} \otimes \alpha) > 0\}.$$
The number $\mathrm{gv}^i(\mathcal{F}): = \mathrm{codim}_{\mathrm{Pic}^0(A)}V^i(\mathcal{F}) - i$ is called the \emph{$i$\textsuperscript{th} generic vanishing index} of $\mathcal{F}$; $\mathrm{gv}(\mathcal{F}): = \min_{i>0}\{\mathrm{gv}^i(\mathcal{F})\}$ is called the \emph{generic vanishing index} of $\mathcal{F}$.
We say $\mathcal{F}$ is a \emph{GV-sheaf} (resp. \emph{M-regular sheaf}) if $\mathrm{gv}(\mathcal{F}) \geq 0$ (resp. $>0$) and an \emph{$IT^0$-sheaf} if $V^i(\mathcal{F}) = \emptyset$ for $i>0$.

Let $X$ be an irregular variety equipped with a morphism to an abelian variety $a: X \rightarrow
A$. Let $\mathcal{F}$ be a sheaf on $X$, its \emph{$i$\textsuperscript{th} cohomological support locus w.r.t. $a$} is defined as
$$V^i(\mathcal{F}, a) := \{\alpha \in \mathrm{Pic}^0(A)| h^i(X, \mathcal{F} \otimes \alpha) > 0\}$$
We say $\mathcal{F}$ is \emph{continuously~ globally~ generated} (\emph{CGG} for short) w.r.t. $a$ if the sum of the evaluation maps
$$\mathrm{ev}_U: \oplus_{\alpha \in U}H^0(\mathcal{F} \otimes \alpha) \otimes (\alpha^{-1}) \rightarrow \mathcal{F}$$
is surjective for any non-empty open set $U \subset \hat{A}$.
\end{Definition}

\begin{Proposition}[\cite{PP2} Thm. 5.1]\label{cgg}
An M-regular sheaf on an abelian variety is CGG.
\end{Proposition}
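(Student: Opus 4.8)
The plan is to argue by contradiction, reducing continuous global generation to a fibrewise generation statement and then extracting from its failure a violation of generic vanishing that contradicts $M$-regularity. First I would observe that the image of $\mathrm{ev}_U$ is a coherent subsheaf $\mathcal{F}' \subseteq \mathcal{F}$: it is the union of the images of the finite subsums, and this ascending chain stabilises since $A$ is Noetherian. Hence, by right-exactness of $\otimes\, k(x)$ and Nakayama, $\mathrm{ev}_U$ is surjective if and only if for every point $x \in A$ the induced map on fibres $\bigoplus_{\alpha \in U} H^0(\mathcal{F}\otimes\alpha) \to \mathcal{F}\otimes k(x)$ is surjective. Thus, if $\mathcal{F}$ were not CGG w.r.t. $U$, there would be a point $x$ and a nonzero functional $\lambda$ on $\mathcal{F}\otimes k(x)$ annihilating the image of every evaluation map with $\alpha \in U$. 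Writing $q:\mathcal{F}\twoheadrightarrow k(x)$ for the surjection onto the length-one skyscraper determined by $\lambda$, and $\mathcal{K}:=\ker q$, this says exactly that the composite $H^0(\mathcal{F}\otimes\alpha)\to H^0(k(x)\otimes\alpha)=\mathbb{C}$ is the zero map for every $\alpha\in U$.

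Next I would feed this into the long exact cohomology sequence of $0\to\mathcal{K}\to\mathcal{F}\xrightarrow{q}k(x)\to0$ twisted by $\alpha\in\hat{A}$. Since $H^0(\mathcal{F}\otimes\alpha)\to H^0(k(x)\otimes\alpha)$ vanishes for $\alpha\in U$, the connecting map $\mathbb{C}=H^0(k(x)\otimes\alpha)\hookrightarrow H^1(\mathcal{K}\otimes\alpha)$ is injective there, so $h^1(\mathcal{K}\otimes\alpha)>0$ for all $\alpha\in U$. As $U$ is dense in the irreducible variety $\hat{A}$ and $V^1(\mathcal{K})$ is closed, this forces $V^1(\mathcal{K})=\hat{A}$, that is $\mathrm{codim}_{\hat{A}}V^1(\mathcal{K})=0$, and in particular $\mathcal{K}$ fails to be a GV-sheaf in the sense of Definition \ref{defgv}.

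The crux, and what I expect to be the main obstacle, is to show that this is impossible: for an $M$-regular sheaf $\mathcal{F}$ every elementary modification $\mathcal{K}=\ker(\mathcal{F}\to k(x))$ is a GV-sheaf. This is precisely where the strict inequality $\mathrm{codim}\,V^i(\mathcal{F})>i$ (rather than $\geq i$) must be used — a mere GV-sheaf need not be CGG, so no softer hypothesis will do — and it is most naturally handled through the Fourier--Mukai transform. I would pass to $R\Phi_{\mathcal{P}}(\mathcal{F})$ and $R\Phi_{\mathcal{P}}(\mathcal{K})$, translate the $M$-regularity of $\mathcal{F}$ and the sought GV property of $\mathcal{K}$ into the standard conditions on the codimensions of the supports of the cohomology sheaves of their transforms, and use Mukai's inversion $R\Psi_{\mathcal{P}} \circ R\Phi_{\mathcal{P}} = (-1)_A^*[-d]$ (Theorem \ref{Mu}) together with cohomology and base change to compare the two transforms across the exact triangle attached to $0\to\mathcal{K}\to\mathcal{F}\to k(x)\to0$. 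Here the skyscraper $k(x)$ transforms to a single line bundle on $\hat{A}$ concentrated in degree $0$, so its contribution is controlled in every degree; the codimension bookkeeping then shows that the support loci for the transform of $\mathcal{K}$ still satisfy the GV inequality, whence $\mathrm{codim}\,V^1(\mathcal{K})\geq1$ and so $V^1(\mathcal{K})\neq\hat{A}$. This contradicts the previous paragraph, and therefore $\mathrm{ev}_U$ is surjective for every nonempty open $U$, i.e. $\mathcal{F}$ is CGG.
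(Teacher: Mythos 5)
The paper gives no proof of this Proposition (it is quoted directly from \cite{PP2} Thm.\ 5.1), so your attempt must stand on its own. Its first two paragraphs are correct and set up what is genuinely the right framework: the image of $\mathrm{ev}_U$ is coherent, surjectivity is a fibrewise condition by Nakayama, and failure of CGG produces a point $x$ and a length-one quotient $q\colon\mathcal{F}\twoheadrightarrow k(x)$ whose kernel $\mathcal{K}$ has $V^1(\mathcal{K})=\hat{A}$; you also correctly isolate the crux, namely that M-regularity should force every such $\mathcal{K}$ to be GV. The gap is that your proposed proof of this crux --- ``codimension bookkeeping'' across the exact triangle --- cannot establish it, because it is circular. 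Taking cohomology sheaves of the triangle gives
$$0\rightarrow R^0\Phi_{\mathcal{P}}\mathcal{K}\rightarrow R^0\Phi_{\mathcal{P}}\mathcal{F}\xrightarrow{\ \phi\ }\mathcal{P}_x\rightarrow R^1\Phi_{\mathcal{P}}\mathcal{K}\rightarrow R^1\Phi_{\mathcal{P}}\mathcal{F}\rightarrow 0,\qquad R^i\Phi_{\mathcal{P}}\mathcal{K}\cong R^i\Phi_{\mathcal{P}}\mathcal{F}\ (i\geq 2),$$
where $\mathcal{P}_x=R^0\Phi_{\mathcal{P}}(k(x))$ is the transform of the skyscraper and $\phi=\mathcal{H}^0(R\Phi_{\mathcal{P}}(q))$. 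Degrees $i\geq2$ are harmless, but in degree $1$ the GV condition for $\mathcal{K}$ is \emph{equivalent} to $\phi\neq0$: if $\phi=0$ then $\mathcal{P}_x$ injects into $R^1\Phi_{\mathcal{P}}\mathcal{K}$, so $V^1(\mathcal{K})=\hat{A}$. Moreover, under your contradiction hypothesis $\phi$ \emph{is} zero: by base change on the dense open set $U\setminus\bigcup_{i>0}V^i(\mathcal{F})$, the fibres of $\phi$ are exactly the evaluation maps you assumed to vanish, and a morphism into a line bundle vanishing on a dense open set is zero. So support bookkeeping merely reformulates the statement being proved. Indeed no such formal argument can suffice: the GV-sheaf $\mathcal{F}=\mathcal{O}_A$ fits into the identical formalism, yet the ideal sheaf $\mathcal{K}=I_x$ is not GV and $\mathcal{O}_A$ is not CGG.

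The missing idea is precisely the implication $q\neq0\Rightarrow\phi\neq0$, and this is where M-regularity must enter through a mechanism you never describe. One way to supply it: by Mukai inversion (Theorem \ref{Mu}), $R\Phi_{\mathcal{P}}$ is fully faithful, so $R\Phi_{\mathcal{P}}(q)\neq0$ in $\mathrm{Hom}_{D^b(\hat{A})}(R\Phi_{\mathcal{P}}\mathcal{F},\mathcal{P}_x)$; the truncation triangles of $R\Phi_{\mathcal{P}}\mathcal{F}$ show that the map $f\mapsto\mathcal{H}^0(f)$ from this Hom group to $\mathrm{Hom}(R^0\Phi_{\mathcal{P}}\mathcal{F},\mathcal{P}_x)$ is injective \emph{provided} $\mathrm{Ext}^p(R^p\Phi_{\mathcal{P}}\mathcal{F},\mathcal{P}_x)=0$ for all $p\geq1$; and this vanishing holds because M-regularity gives $\mathrm{codim}_{\hat{A}}\mathrm{Supp}\,R^p\Phi_{\mathcal{P}}\mathcal{F}>p$ for $p>0$ (\cite{PP2} Prop.\ 2.1, as quoted in the proof of Proposition \ref{sjt}), while $\mathrm{Ext}^p(\mathcal{G},L)=0$ for any coherent sheaf $\mathcal{G}$ with $\mathrm{codim}\,\mathrm{Supp}\,\mathcal{G}>p$ and any line bundle $L$ (local-to-global Ext). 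This is exactly where the strict inequality is consumed, and exactly what fails for $\mathcal{O}_A$: there $R^d\Phi_{\mathcal{P}}\mathcal{O}_A=\mathbb{C}(\hat{0})$ has support of codimension exactly $d$, and the nonzero map $R\Phi_{\mathcal{P}}(q)$ lives in $\mathrm{Ext}^d(\mathbb{C}(\hat{0}),\mathcal{P}_x)\neq0$, invisible to $\mathcal{H}^0$. (Pareschi--Popa's own argument is the dual version: $q$ transforms to a nonzero map from $\mathcal{P}_x$ into $R^d\Phi_{\mathcal{P}}(R\mathcal{H}om(\mathcal{F},\mathcal{O}_A))$, which is a sheaf by the GV property and torsion-free by M-regularity, hence the map is injective and generically nonzero on fibres.) Without some such argument, your proof is incomplete at its central step.
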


\begin{Proposition}\label{sjt}
Let $\mathcal{F}$ be a sheaf on an abelian variety $A$ of dimension $d$.
\begin{itemize}
\item[(i)]
If $\mathcal{F}$ is M-regular, then there is a natural surjection
$$(-1)_A^*R^d\Psi_\mathcal{P}R^0\Phi_\mathcal{P}\mathcal{F} \rightarrow \mathcal{F}.$$
\item[(ii)]
$\mathcal{F}$ is $IT^0$ if and only if $R\Phi_{\mathcal{P}}\mathcal{F} \cong R^0\Phi_{\mathcal{P}}\mathcal{F}$.
\end{itemize}
\end{Proposition}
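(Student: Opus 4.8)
The plan is to prove (ii) first as a direct application of the theorem on cohomology and base change, and then to deduce (i) from Mukai's inversion (Theorem \ref{Mu}) by a truncation argument, the surjectivity being controlled by the codimension of the cohomological support loci of Definition \ref{defgv}.

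For (ii): since $R\Phi_{\mathcal P}\mathcal F=R(p_2)_*(Lp_1^*\mathcal F\otimes\mathcal P)$ is computed by a proper pushforward, one has $R^i\Phi_{\mathcal P}\mathcal F=0$ for $i<0$, so the condition $R\Phi_{\mathcal P}\mathcal F\cong R^0\Phi_{\mathcal P}\mathcal F$ is equivalent to $R^i\Phi_{\mathcal P}\mathcal F=0$ for all $i>0$. The fibre of $p_2$ over $\alpha\in\hat A$ is $A\times\{\alpha\}$, on which $Lp_1^*\mathcal F\otimes\mathcal P$ restricts to $\mathcal F\otimes\alpha$, so the relevant fibre cohomology is $H^i(A,\mathcal F\otimes\alpha)$ and its non-vanishing locus is exactly $V^i(\mathcal F)$. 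If $\mathcal F$ is $IT^0$ then $H^i(\mathcal F\otimes\alpha)=0$ for every $\alpha$ and every $i>0$, and Grauert's theorem forces $R^i\Phi_{\mathcal P}\mathcal F=0$ for $i>0$. Conversely I would argue by descending induction: the base-change map in top degree $d$ is always surjective, so $R^d\Phi_{\mathcal P}\mathcal F=0$ yields $H^d(\mathcal F\otimes\alpha)=0$ for all $\alpha$; the cohomology-and-base-change criterion then propagates surjectivity and local freeness downward, and the vanishing of each $R^i\Phi_{\mathcal P}\mathcal F$ ($i>0$) gives $H^i(\mathcal F\otimes\alpha)=0$ for all $\alpha$, i.e. $V^i(\mathcal F)=\emptyset$.

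For (i), the starting point is $R\Psi_{\mathcal P}\circ R\Phi_{\mathcal P}=(-1)_A^*[-d]$, which gives $(-1)_A^*R\Psi_{\mathcal P}R\Phi_{\mathcal P}\mathcal F\cong\mathcal F[-d]$ and hence $\mathcal H^d\big((-1)_A^*R\Psi_{\mathcal P}R\Phi_{\mathcal P}\mathcal F\big)\cong\mathcal F$. Because $R\Phi_{\mathcal P}\mathcal F$ lives in cohomological degrees $[0,d]$, its zeroth cohomology sheaf fits into the canonical truncation triangle $R^0\Phi_{\mathcal P}\mathcal F\to R\Phi_{\mathcal P}\mathcal F\to\tau_{\geq 1}R\Phi_{\mathcal P}\mathcal F\to R^0\Phi_{\mathcal P}\mathcal F[1]$. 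Applying the exact functor $(-1)_A^*R\Psi_{\mathcal P}$ and reading off the long exact sequence of cohomology sheaves in degree $d$ produces the natural map $(-1)_A^*R^d\Psi_{\mathcal P}R^0\Phi_{\mathcal P}\mathcal F\to\mathcal F$, whose cokernel is a subquotient of $\mathcal H^d\big((-1)_A^*R\Psi_{\mathcal P}\tau_{\geq 1}R\Phi_{\mathcal P}\mathcal F\big)$. The problem is thus reduced to the vanishing of this last sheaf.

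To obtain that vanishing I would use the hypercohomology spectral sequence $E_2^{p,q}=R^p\Psi_{\mathcal P}(R^q\Phi_{\mathcal P}\mathcal F)\Rightarrow\mathcal H^{p+q}\big(R\Psi_{\mathcal P}\tau_{\geq 1}R\Phi_{\mathcal P}\mathcal F\big)$, in which only $q\geq 1$ contributes; the terms abutting to degree $d$ have $p=d-q$. Here M-regularity enters through the support estimate $\mathrm{codim}\,\mathrm{Supp}(R^q\Phi_{\mathcal P}\mathcal F)>q$ (a base-change consequence of $\mathrm{codim}\,V^q(\mathcal F)>q$), so each $R^q\Phi_{\mathcal P}\mathcal F$ is supported in codimension $\geq q+1$. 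The crux is then the lemma that a coherent sheaf $\mathcal G$ on $\hat A$ supported in codimension $\geq c$ satisfies $R^p\Psi_{\mathcal P}\mathcal G=0$ for $p>d-c$; applying it with $c=q+1$ annihilates exactly the surviving term $p=d-q$, so all $E_2^{p,q}$ with $p+q=d$ vanish and the cokernel is zero. This codimension-to-degree lemma is the main obstacle, and I would derive it from the Grothendieck--Serre duality exchange $R\Delta_A\circ R\Psi_{\mathcal P}\cong(-1)_A^*R\Psi_{\mathcal P}\circ R\Delta_{\hat A}[d]$ together with the vanishing $\mathcal{E}xt^j(\mathcal G,\mathcal O_{\hat A})=0$ for $j<c$, which turns the codimension bound on $\mathcal G$ into a bound on the cohomological degrees of $R\Psi_{\mathcal P}\mathcal G$. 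It is exactly the strict inequality defining M-regularity, rather than the weaker GV condition, that pushes $p=d-q$ into the vanishing range.
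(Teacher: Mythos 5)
Your part (ii) is correct and is essentially the paper's own argument: the ``only if'' direction is Grauert/base change, and the ``if'' direction is descending induction with the cohomology-and-base-change criterion, starting from the trivially surjective base-change maps in degrees above $d$. Your part (i) also has the same skeleton as the paper's proof: Mukai inversion together with the spectral sequence
$$E_2^{p,q} = (-1)_A^*R^p\Psi_{\mathcal{P}}R^q\Phi_{\mathcal{P}}\mathcal{F} \Rightarrow \mathcal{H}^{p+q}(\mathcal{F}[-d]),$$
(your truncation triangle is just a repackaging of the edge-map argument), with the key input being the vanishing $R^p\Psi_{\mathcal{P}}R^q\Phi_{\mathcal{P}}\mathcal{F}=0$ for $q\geq 1$, $p+q=d$, deduced from $\mathrm{codim}\,\mathrm{Supp}\,R^q\Phi_{\mathcal{P}}\mathcal{F}>q$; and you correctly identify that the strict inequality of M-regularity, not the GV inequality, is what kills the diagonal terms.

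The gap is in your proposed proof of the codimension-to-degree lemma (``$\mathcal{G}$ supported in codimension $\geq c$ implies $R^p\Psi_{\mathcal{P}}\mathcal{G}=0$ for $p>d-c$''): the duality route does not close. From $\mathcal{E}xt^j(\mathcal{G},\mathcal{O}_{\hat{A}})=0$ for $j<c$ you learn that $R\Delta_{\hat{A}}\mathcal{G}:=R\mathcal{H}om(\mathcal{G},\mathcal{O}_{\hat{A}})$ lives in degrees $[c,d]$, hence via the exchange formula that $R\Delta_A(R\Psi_{\mathcal{P}}\mathcal{G})\cong(-1)_A^*R\Psi_{\mathcal{P}}(R\Delta_{\hat{A}}\mathcal{G})[d]$ lives in degrees $\geq c-d$. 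But a lower bound on the degrees of the dual complex does not convert into the desired upper bound on the degrees of $R\Psi_{\mathcal{P}}\mathcal{G}$: dualizing back, a complex concentrated in degrees $[c-d,d]$ only has its dual concentrated in degrees $\leq d-(c-d)=2d-c$, which for $c\leq d$ is weaker than the a priori bound $d$ and gives nothing. To do better one needs the refined local-duality criterion ($K\in D^{\leq m}$ if and only if $\mathrm{codim}\,\mathrm{Supp}\,\mathcal{E}xt^j(K,\mathcal{O})\geq j-m$ for all $j$), and checking its hypotheses for $K=R\Psi_{\mathcal{P}}\mathcal{G}$ requires codimension estimates on the supports of the sheaves $R^p\Psi_{\mathcal{P}}(\mathcal{E}xt^q(\mathcal{G},\mathcal{O}_{\hat{A}}))$ --- that is, essentially the lemma you are trying to prove. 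As sketched, the step ``which turns the codimension bound on $\mathcal{G}$ into a bound on the cohomological degrees of $R\Psi_{\mathcal{P}}\mathcal{G}$'' is exactly what is missing.

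The lemma does hold, but by a much more elementary argument, which is what the paper's ``thus'' tacitly invokes: $R^p\Psi_{\mathcal{P}}\mathcal{G}=R^p(p_1)_*(p_2^*\mathcal{G}\otimes\mathcal{P})$, and the sheaf $p_2^*\mathcal{G}\otimes\mathcal{P}$ is supported on $A\times\mathrm{Supp}\,\mathcal{G}$, which meets every fiber of $p_1$ in dimension $\leq\dim\mathrm{Supp}\,\mathcal{G}\leq d-c$; higher direct images under a proper morphism vanish in degrees above the dimension of the fibers of the support of the sheaf (theorem on formal functions, \cite{ha77} III.11.2, applied to the sheaf viewed on its support). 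Replacing your duality paragraph by this observation makes your proof of (i) coincide with the paper's.
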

\begin{proof}
(i) By assumption, for $j>0$, $\mathrm{codim}_{\hat{A}}\mathrm{Supp} R^j\Phi_\mathcal{P}\mathcal{F} > j$ (\cite{PP2} Prop. 2.1), thus
$$\clubsuit: R^i\Psi_\mathcal{P}R^j\Phi_\mathcal{P}\mathcal{F} = 0~\mathrm{if}~j \neq 0~ \mathrm{and} ~i+j \geq d.$$

By $(-1)_A^*R\Psi_\mathcal{P}R\Phi_\mathcal{P}(a_*\omega_X)\cong a_*\omega_X[-d]$ (Theorem~\ref{Mu}), applying Leray spectral sequence gives that
$$E_2^{i,j}:= (-1)_A^*R^i\Psi_\mathcal{P}R^j\Phi_\mathcal{P}\mathcal{F} \Rightarrow \mathcal{H}^{i+j}(\mathcal{F}[-d]).$$
Since $\clubsuit$, we have that
\begin{itemize}
\item
$E_{\infty}^{i,j} \cong 0$ for $i+j =d$ and $(i,j) \neq (d,0)$, thus
$E_{\infty}^{d,0} \cong \mathcal{F}$; and
\item
$d_r^{d,0}: E_{r}^{d,0} \rightarrow E_{r}^{d+r,-r+1} = 0$ is zero for $r\geq 2$, thus there is a surjection
$$E_{2}^{d,0} = (-1)_A^*R^d\Psi_\mathcal{P}R^0\Phi_\mathcal{P}\mathcal{F} \rightarrow E_{\infty}^{d,0}.$$
\end{itemize}

Then we conclude a natural surjection
$$(-1)_A^*R^d\Psi_\mathcal{P}R^0\Phi_\mathcal{P}\mathcal{F} \rightarrow \mathcal{F}.$$

(ii) The direction ``only if'' follows from applying \cite{ha77} Cor. 12.9. For the other direction, note that for every $\alpha \in \mathrm{Pic}^0 (A)$ and $i>d$ the natural map $R^i\Phi_\mathcal{P}\mathcal{F}\otimes \mathbb{C}(\alpha) \rightarrow H^i(A, \mathcal{F}\otimes \alpha) = 0$ is surjective, and if $R\Phi_{\mathcal{P}}\mathcal{F} \cong R^0\Phi_{\mathcal{P}}\mathcal{F}$, then $R^i\Phi_{\mathcal{P}}\mathcal{F} = 0$ for $i>0$. Then applying \cite{ha77} Cor. 12.11 (b), we can prove the direction ``if'' by induction.
\end{proof}

\begin{Corollary}\label{fm}
Let $a: X \rightarrow A$ be a generically finite morphism from a smooth projective variety to an abelian variety. Suppose that $\dim X = \dim A =d \geq 2$, $a^* : \mathrm{Pic}^0(A) \rightarrow \mathrm{Pic}^0(X)$ is an embedding, and for $i>0$, $V^i(\omega_X, a)$ is composed of at most some isolated points. Then there exists an exact sequence
$$(-1)_A^*R^d\Psi_\mathcal{P}R^0\Phi_\mathcal{P}(a_*\omega_X) \rightarrow a_*\omega_X \rightarrow \omega_A \rightarrow 0.$$
\end{Corollary}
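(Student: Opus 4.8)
The plan is to combine the natural surjection from Proposition \ref{sjt}(i) with the factorization of $a_*\omega_X$ through $\omega_A$, reducing the whole problem to showing that the cokernel of the surjection $(-1)_A^*R^d\Psi_\mathcal{P}R^0\Phi_\mathcal{P}(a_*\omega_X) \to a_*\omega_X$ is precisely $\omega_A$. The first observation I would record is that under the hypotheses $a$ is generically finite onto $A$ with $\dim X = \dim A$, so there is a canonical trace/inclusion map $\omega_A \hookrightarrow a_*\omega_X$ arising from $a^*\omega_A \hookrightarrow \omega_X$ (the ramification divisor is effective); dually, since $a$ is generically finite, $\omega_A$ splits off as a direct summand of $a_*\omega_X$ via the trace map $a_*\omega_X \to \omega_A$. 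This gives the desired quotient $a_*\omega_X \twoheadrightarrow \omega_A \to 0$, and the content of the corollary is that the image of $(-1)_A^*R^d\Psi_\mathcal{P}R^0\Phi_\mathcal{P}(a_*\omega_X)$ in $a_*\omega_X$ is exactly the kernel of this trace map.

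The key input is that $a_*\omega_X$ is a GV-sheaf but in general \emph{not} M-regular, so Proposition \ref{sjt}(i) does not apply directly; this is exactly where the hypothesis ``$V^i(\omega_X,a)$ is composed of at most isolated points for $i>0$'' enters. First I would note that by the hypothesis on $a^*$ being an embedding and the projection formula, $V^i(\omega_X, a) = V^i(a_*\omega_X)$, so the cohomological support loci of $a_*\omega_X$ in positive degree are finite sets of points. The plan is to analyze the Fourier-Mukai transform $R\Phi_\mathcal{P}(a_*\omega_X)$ under this codimension condition: since $V^i$ has codimension $d$ (i.e.\ is a finite set of points in the $d$-dimensional $\hat A$) for each $i>0$, the support of $R^j\Phi_\mathcal{P}(a_*\omega_X)$ is concentrated in these finitely many points for $j>0$, while $R^0\Phi_\mathcal{P}(a_*\omega_X)$ carries the ``M-regular part.'' I would then rerun the spectral sequence argument of Proposition \ref{sjt}(i), but now tracking the obstruction terms $E_2^{i,j}$ with $j>0$ that were forced to vanish in the M-regular case; here they need not vanish but are supported on points, and their contribution after applying $R\Psi_\mathcal{P}$ accounts for exactly the $0$-dimensional discrepancy between the image of the FM-reconstruction and all of $a_*\omega_X$.

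Concretely, the main technical step is to identify the cokernel. I expect to argue that $\omega_A$ is the maximal $IT^0$ (equivalently, the maximal torsion-free M-regular) quotient, or rather that the quotient $a_*\omega_X / \mathrm{Im}$ is supported where the extra cohomology lives; using that $\mathcal{O}_A = R^d\Psi_\mathcal P R^0\Phi_\mathcal P(\omega_A)$-type identities (equivalently, $\omega_A$ is the pushforward under the identity and its transform is a skyscraper at the origin), one sees $\omega_A$ is precisely the part of $a_*\omega_X$ \emph{not} reconstructed from the M-regular summand. The cleanest route is probably to apply $R\Phi_\mathcal{P}$ to the conjectured sequence and check exactness termwise on $\hat A$: away from the finitely many points of $\bigcup_{i>0}V^i$ the sheaf $a_*\omega_X$ is M-regular and the surjection of Proposition \ref{sjt}(i) is already an isomorphism onto the kernel of the trace, while at those points the comparison reduces to the local structure of the skyscraper transforms. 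The hard part will be this local bookkeeping at the isolated points of the support loci — controlling the higher FM-transforms $R^j\Phi_\mathcal{P}(a_*\omega_X)$ for $j>0$ and verifying that their $R\Psi_\mathcal{P}$-contributions assemble exactly into $\omega_A$ rather than some larger or smaller skyscraper-like sheaf — since this is precisely the place where the general GV (as opposed to M-regular) nature of $a_*\omega_X$ obstructs a naive application of Proposition \ref{sjt}(i).
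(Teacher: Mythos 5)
Your overall route --- running the $R\Psi_{\mathcal{P}}\circ R\Phi_{\mathcal{P}}$ spectral sequence on $a_*\omega_X$ itself and tracking the point-supported obstruction terms --- is viable and genuinely different from the paper's argument, but as written there is a real gap: the decisive step (identifying the cokernel with $\omega_A$) is deferred as ``the hard part'' rather than carried out, and the tool that closes it instantly is the splitting you invoke and then abandon. The paper's proof is exactly this: write $a_*\omega_X \cong \omega_A \oplus \mathcal{F}$ (\cite{CV} Prop.\ 1.2) and observe that $\mathcal{F}$ \emph{is} M-regular. Indeed, for $0<i<d$ the locus $V^i(\mathcal{F}) \subseteq V^i(\omega_X,a)$ is finite, so $\mathrm{gv}^i(\mathcal{F}) \geq d-i > 0$; and $V^d(\mathcal{F}) = \emptyset$, because $h^d(A, a_*\omega_X \otimes \alpha) = h^0\bigl(X, (a^*\alpha)^{-1}\bigr)$ vanishes unless $\alpha = \hat{0}$ (this is precisely where the hypothesis that $a^*$ is an embedding is consumed) and coincides with $h^d(A,\omega_A\otimes\alpha)$ for every $\alpha$, so the whole top-degree cohomology of $a_*\omega_X$ is absorbed by the summand $\omega_A$. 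Hence Proposition \ref{sjt}(i) applies to $\mathcal{F}$ --- not to $a_*\omega_X$, as you rightly note, but it was never needed there --- giving a surjection $(-1)_A^*R^d\Psi_{\mathcal{P}}R^0\Phi_{\mathcal{P}}\mathcal{F} \rightarrow \mathcal{F}$; since $R^0\Phi_{\mathcal{P}}\omega_A = 0$, one has $R^0\Phi_{\mathcal{P}}(a_*\omega_X) \cong R^0\Phi_{\mathcal{P}}\mathcal{F}$, so this surjection is literally the map in the statement, and exactness of $(-1)_A^*R^d\Psi_{\mathcal{P}}R^0\Phi_{\mathcal{P}}(a_*\omega_X) \rightarrow a_*\omega_X \rightarrow \omega_A \rightarrow 0$ follows from the direct-sum decomposition.

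Concerning the part you did write: the ``hard part'' of your own route is in fact routine, but your description of it contains a conceptual slip, and two of your statements are false. For $0<j<d$ the sheaves $R^j\Phi_{\mathcal{P}}(a_*\omega_X)$ are indeed supported on finitely many points, and $R\Psi_{\mathcal{P}}$ of a finite-length sheaf is a locally free sheaf concentrated in degree $0$; hence $E_2^{i,j}=0$ whenever $i>0$ and $0<j<d$, so on the diagonal $i+j=d$ only the corners $(d,0)$ and $(0,d)$ survive, and $E_\infty^{0,d} = E_2^{0,d} = R^0\Psi_{\mathcal{P}}\bigl(R^d\Phi_{\mathcal{P}}(a_*\omega_X)\bigr)$. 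Granting $R^d\Phi_{\mathcal{P}}(a_*\omega_X) \cong \mathbb{C}(\hat{0})$ --- which is \cite{BLNP} Prop.\ 6.1, or follows from the splitting as above, and again uses the embedding hypothesis, which your sketch never localizes --- this corner equals $R^0\Psi_{\mathcal{P}}(\mathbb{C}(\hat{0})) \cong \mathcal{O}_A = \omega_A$, and the edge maps of the filtration give the asserted sequence. Note that this cokernel is a line bundle on all of $A$: the discrepancy is not ``$0$-dimensional'' and nothing ``skyscraper-like'' appears on the $A$-side, since the $\Psi$-transform turns the skyscraper $\mathbb{C}(\hat{0})$ into $\mathcal{O}_A$. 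Finally, the identity you propose, $\mathcal{O}_A = R^d\Psi_{\mathcal{P}}R^0\Phi_{\mathcal{P}}(\omega_A)$, is wrong: $R^0\Phi_{\mathcal{P}}\omega_A = 0$ because $R\Phi_{\mathcal{P}}\omega_A$ is concentrated in degree $d$, so the right-hand side is zero. Likewise $\omega_A$ is not ``the maximal $IT^0$ (or M-regular) quotient'' of $a_*\omega_X$: the sheaf $\omega_A \cong \mathcal{O}_A$ is GV but not M-regular (its $V^i$ all contain $\hat{0}$); it is precisely the non-M-regular summand, and $\mathcal{F}$ is the M-regular one.
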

\begin{proof}
By assumption we have a splitting (cf. for example \cite{CV} Prop. 1.2)
$$a_*\omega_X \cong \omega_A \oplus \mathcal{F}.$$
Then by $R\Phi_\mathcal{P}(a_*\omega_X) \cong R\Phi_\mathcal{P}\omega_A \oplus R\Phi_\mathcal{P} \mathcal{F}$ and $R^d\Phi_\mathcal{P}(a_*\omega_X) \cong R^d\Phi_\mathcal{P}\omega_A \cong \mathbb{C}(\hat{0})$ (Proposition 6.1 in \cite{BLNP}), we find that
$$R^i\Phi_\mathcal{P}(a_*\omega_X) \cong R^i\Phi_\mathcal{P} \mathcal{F}~\mathrm{for}~i = 0,1,...,d-1~\mathrm{and}~R^d\Phi_\mathcal{P} \mathcal{F}=0.$$
Therefore, $\mathcal{F}$ is M-regular, and there is a surjection by Proposition \ref{sjt} (i)
$$(-1)_A^*R^d\Psi_\mathcal{P}R^0\Phi_\mathcal{P}(a_*\omega_X) \cong (-1)_A^*R^d\Psi_\mathcal{P}R^0\Phi_\mathcal{P}\mathcal{F} \rightarrow \mathcal{F}.$$
Then naturally it follows the exact sequence
$$(-1)_A^*R^d\Psi_\mathcal{P}R^0\Phi_\mathcal{P}(a_*\omega_X) \rightarrow a_*\omega_X \rightarrow \omega_A \rightarrow 0$$
\end{proof}

Applying Theorem \ref{Mu} and Proposition \ref{sjt} (ii), we get (see \cite{Zh} Cor. 2.2 for details)
\begin{Corollary}\label{Muc}
Let $A$ be an abelian variety of dimension $d$, $E$ an $IT^0$-vector bundle on $A$. Then $R\Phi_{\mathcal{P}}E$ is a vector bundle on $\hat{A}$, and its dual $(R\Phi_{\mathcal{P}}E)^*$ is an $IT^0$-vector bundle such that
$$R\Psi_{\mathcal{P}}((R\Phi_{\mathcal{P}}E)^*)  \cong E^* \cong ((-1)_A^*R^d\Psi_{\mathcal{P}}((R^0\Phi_{\mathcal{P}}E)))^*.$$
\end{Corollary}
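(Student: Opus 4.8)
The plan is to combine the Mukai inversion theorem (Theorem~\ref{Mu}) with the characterization of $IT^0$-sheaves (Proposition~\ref{sjt}(ii)) to control the Fourier–Mukai transform of an $IT^0$-bundle, and then to extract the claimed duality. First I would observe that since $E$ is $IT^0$, Proposition~\ref{sjt}(ii) gives $R\Phi_{\mathcal P}E \cong R^0\Phi_{\mathcal P}E$, i.e.\ the transform is concentrated in degree $0$; combined with cohomology-and-base-change this forces $R^0\Phi_{\mathcal P}E$ to be locally free (its fiber at $\alpha$ is $H^0(A, E\otimes\alpha)$, of constant rank by the vanishing of higher cohomology). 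This establishes that $\widehat E := R\Phi_{\mathcal P}E$ is a vector bundle on $\hat A$.

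Next I would apply the inversion theorem $R\Psi_{\mathcal P}\circ R\Phi_{\mathcal P} = (-1)_A^*[-d]$ to $E$, which yields $R\Psi_{\mathcal P}(\widehat E) \cong (-1)_A^*E[-d]$, a sheaf sitting in a single cohomological degree. The key technical point is to transfer this single-degree concentration from $\widehat E$ to its dual $(\widehat E)^*$. I expect this to rest on a Grothendieck–Serre duality (or relative duality) identity for the Fourier–Mukai transform, of the form $R\Psi_{\mathcal P}((\widehat E)^*) \cong \bigl(R\Phi_{\mathcal P}(\widehat E)\bigr)^*[\text{shift}]$ up to a sign automorphism; the precise bookkeeping of shifts and of the $(-1)$-pullbacks is the part I would be most careful with. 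Granting such an identity, the fact that $\widehat E$ transforms back (under $R\Psi_{\mathcal P}$) to a single sheaf $E$ dualizes to the statement that $(\widehat E)^*$ is itself $IT^0$, since its transform is again concentrated in one degree; by Proposition~\ref{sjt}(ii) the $IT^0$ property is exactly equivalent to this concentration.

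It then remains to identify the various transforms. From $R\Psi_{\mathcal P}(\widehat E)\cong (-1)_A^*E[-d]$ one reads off $E^* \cong \bigl((-1)_A^* R^d\Psi_{\mathcal P}(R^0\Phi_{\mathcal P}E)\bigr)^*$ after dualizing and accounting for the degree-$d$ shift, which gives the last isomorphism in the chain. The middle isomorphism $R\Psi_{\mathcal P}((R\Phi_{\mathcal P}E)^*)\cong E^*$ follows by dualizing the inversion isomorphism and using that dualization commutes with $R\Psi_{\mathcal P}$ up to the sign automorphism and shift, precisely the duality identity invoked above.

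The main obstacle, as signalled by the parenthetical reference to \cite{Zh} Cor.~2.2 in the statement, is the careful handling of duality for Fourier–Mukai functors: one must know how $\mathcal{R}{\mathcal Hom}(-,\mathcal O)$ interacts with $R\Phi_{\mathcal P}$ and $R\Psi_{\mathcal P}$, including the correct twist by $(-1)_A^*$ (or $(-1)_{\hat A}^*$) and the cohomological shift by $d$. Once that duality formula is in hand, everything else is formal: local freeness comes from base change, and the $IT^0$-property of the dual is the single-degree concentration read off through Proposition~\ref{sjt}(ii). I would therefore structure the argument as (1) local freeness of $\widehat E$, (2) the duality/inversion identities assembled into the displayed chain, and (3) the $IT^0$-verification for $(\widehat E)^*$, flagging step (2) as the place where the sign and shift conventions must be pinned down exactly.
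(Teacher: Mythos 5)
Your proposal is correct and is essentially the paper's own route: the paper proves this corollary in one line by invoking Theorem \ref{Mu} and Proposition \ref{sjt}(ii), with exactly the Grothendieck--Serre duality bookkeeping you describe outsourced to \cite{Zh} Cor.~2.2 (local freeness by base change, the duality exchange for the Fourier--Mukai functor, then the $IT^0$ criterion). The only slip is notational: since $\widehat E = R\Phi_{\mathcal P}E$ lives on $\hat A$, the duality identity you invoke should read $R\Psi_{\mathcal P}\bigl((\widehat E)^*\bigr) \cong (-1)_A^*\bigl(R\Psi_{\mathcal P}(\widehat E)\bigr)^{\vee}[-d]$ (with $R\Psi_{\mathcal P}$, not $R\Phi_{\mathcal P}$, inside the dual), which combined with $R\Psi_{\mathcal P}(\widehat E)\cong (-1)_A^*E[-d]$ indeed returns $E^*$ concentrated in degree $0$.
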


\subsection{Generic vanishing theorem}
Recall generic vanishing theorem due to Green and Lazarsfeld:
\begin{Theorem} [\cite{GL1}, \cite{GL2}]\label{gv} Let $X$ be a smooth compact k\"{a}hler manifold with $\dim X =n$ and $\dim \mathrm{alb}_X(X) = k$. Then
\begin{enumerate}
\item[(i)]{$\mathrm{codim}_{\mathrm{Pic}^0 (X)}V^i(\omega_X, \mathrm{alb}_X) \geq k-n +i$.}
\item[(ii)]{Let $Z$ be a component of $V^i(\omega_X, \mathrm{alb}_X)$ of positive dimension. Then $Z$ is a subtorus of $\mathrm{Pic}^0 (X)$, and there exists an analytic variety $Y$ of dimension $\leq n-i$ and a dominant map $f: X \rightarrow Y$ such that $Z \subset \alpha + f^*\mathrm{Pic}^0 (Y)$ where $\alpha$ is torsion.}
\item[(iii)]{$Y$ has maximal Albanese dimension.}
\item[(iv)]{Let $\alpha \in V^i(\omega_X, \mathrm{alb}_X)$ and $v \in H^1(\mathcal{O}_X) = T_\alpha \mathrm{Pic}^0(X)$. The \emph{derived complex} below

 $$\centerline{\xymatrix{
&H^{n-i-1}(\alpha^{-1}) \ar[r]^{\cup v} &H^{n-i}(\alpha^{-1}) \ar[r]^{\cup v} &H^{n-i+1}(\alpha^{-1})
}}$$
is exact if $v$ is not contained in the tangent cone $TC_\alpha V^i(\omega_X, \mathrm{alb}_X)$ to $V^i(\omega_X, \mathrm{alb}_X)$ at $\alpha$.}
\end{enumerate}
\end{Theorem}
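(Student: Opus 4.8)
The engine of the proof is the interplay between Hodge theory and the deformation theory of the cohomology of the topologically trivial line bundles $\alpha \in \mathrm{Pic}^0(X)$. The starting point is the \emph{derivative complex} named in (iv): identifying $T_\alpha\mathrm{Pic}^0(X) = H^1(X,\mathcal{O}_X)$, each $v \in H^1(\mathcal{O}_X)$ defines cup-product maps $\cup v : H^q(X,\alpha^{-1}) \to H^{q+1}(X,\alpha^{-1})$, and since $v \cup v = 0$ in characteristic $0$ these assemble into a complex $(H^\bullet(\alpha^{-1}), \cup v)$. I would first establish that this complex is literally the first-order approximation of the family of cohomology groups $H^\bullet(X,\alpha_t)$ as $\alpha_t$ moves away from $\alpha$ in the direction $v$; this is what makes the local geometry of the support loci accessible to linear algebra.

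The second ingredient is a Hodge-theoretic rank estimate. Under the Hodge decomposition $H^q(\mathcal{O}_X) = \overline{H^0(\Omega_X^q)}$ (and its analogue for the flat bundle $\alpha^{-1}$ via harmonic theory for unitary local systems), cup product by $v$ is conjugate to wedge product by the holomorphic $1$-form $\eta = \bar v \in H^0(\Omega_X^1)$, so the derivative complex is conjugate-dual to $(H^0(\Omega_X^\bullet), \wedge\eta)$. Because every holomorphic $1$-form on $X$ is pulled back through $\mathrm{alb}_X$ from $\mathrm{Alb}(X)$ and $\dim\mathrm{alb}_X(X) = k$, a general $\eta$ is non-degenerate of rank $k$, and the associated wedge (Koszul-type) complex is exact outside a range controlled by $k$. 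Translating this back, the non-exactness locus $\Sigma^i := \{\,v : (H^\bullet(\alpha^{-1}),\cup v) \text{ fails to be exact at } H^{n-i}\,\}$ has codimension at least $k-n+i$ in $H^1(\mathcal{O}_X)$.

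To obtain (i) and (iv) I would connect these two ingredients through deformation theory, the key assertion being the identification of the tangent cone $TC_\alpha V^i(\omega_X,\mathrm{alb}_X)$ with $\Sigma^i$. The inclusion $\Sigma^i \subseteq TC_\alpha V^i$---every direction along which the derivative complex fails to be exact is a genuine limiting tangent direction of $V^i$---is precisely statement (iv) in contrapositive form. The reverse inclusion $TC_\alpha V^i \subseteq \Sigma^i$ is the more elementary first-order computation: differentiating the condition $h^i > 0$ shows that a tangent direction to $V^i$ must obstruct exactness at the middle term. Granting $TC_\alpha V^i \subseteq \Sigma^i$, we get $\dim_\alpha V^i = \dim TC_\alpha V^i \leq \dim\Sigma^i$, so the codimension estimate of the previous paragraph yields $\mathrm{codim}\,V^i \geq k-n+i$, which is (i).

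Finally, for the structure statements (ii) and (iii) I would analyze a positive-dimensional component $Z$. Along $Z$ the tangent directions span a subspace $W \subseteq H^1(\mathcal{O}_X)$ on which the derivative complex is uniformly non-exact; dualizing, the corresponding subspace of holomorphic $1$-forms consists of forms with compatible wedge-degeneracies, and by the Castelnuovo--de Franchis / Catanese type fibration theorem this produces a dominant map $f : X \to Y$ with $\dim Y \leq n-i$ whose pulled-back $1$-forms account for $W$, giving $Z \subseteq \alpha + f^*\mathrm{Pic}^0(Y)$. That $Y$ has maximal Albanese dimension is built into the construction, since $f$ is assembled from independent $1$-forms pulled back from an abelian variety; that $Z$ is a subtorus translated by a torsion $\alpha$ follows from the rigidity (Simpson-type structure) of the support loci. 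The main obstacle is the deformation-theoretic identification $\Sigma^i \subseteq TC_\alpha V^i$ underlying (iv): passing from the pointwise derivative complex to genuine tangent directions of $V^i$ requires controlling the higher differentials in the spectral sequence of cohomology in families---equivalently, working over the exterior algebra $\bigwedge^\bullet H^1(\mathcal{O}_X)$ and invoking the degeneration furnished by the $\partial\bar\partial$-lemma---and this is where the real Hodge-theoretic weight of the theorem lies.
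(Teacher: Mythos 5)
The paper offers no proof of this statement to compare against: Theorem~2.5 is quoted as a classical result and attributed to \cite{GL1}, \cite{GL2} (the assertion that the translate $\alpha$ is torsion is really Simpson's later refinement, which you correctly hedge as ``Simpson-type''). So your sketch can only be measured against Green--Lazarsfeld's original arguments, and at the level of architecture it reproduces them faithfully: the derivative complex as the linearization of $t \mapsto H^{\bullet}(X,\alpha_t)$; the Hodge-theoretic conjugation turning $\cup v$ into wedging by a holomorphic $1$-form $\eta$; the two inclusions between the non-exactness locus $\Sigma^i$ and the tangent cone $TC_{\alpha}V^i$, with (iv) amounting to $\Sigma^i \subseteq TC_{\alpha}V^i$ (the hard direction, resting on the higher-obstruction/formality theory of \cite{GL2} via the $\partial\bar\partial$-lemma), while the elementary first-order inclusion $TC_{\alpha}V^i \subseteq \Sigma^i$ plus a codimension bound on $\Sigma^i$ yields (i). This division of labor is exactly how the original proof is organized, and you correctly locate the real weight of the theorem in the formality step.

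Two steps, however, are asserted in a form that does not hold as stated. First, in the codimension estimate you claim that since a general $\eta$ pulled back from $\mathrm{Alb}(X)$ ``is non-degenerate of rank $k$,'' the complex $(H^0(\Omega_X^{\bullet}), \wedge\eta)$ is exact in the range needed. Exactness of a complex of \emph{global sections} is not a formal Koszul consequence of pointwise non-degeneracy; the actual Green--Lazarsfeld argument is sheaf-theoretic: the Koszul complex of sheaves $(\Omega_X^{\bullet}, \wedge\eta)$ is exact away from the zero locus $Z(\eta)$, the hypothesis $\dim \mathrm{alb}_X(X) = k$ enters precisely by bounding $\dim Z(\eta) \le n-k$ for general $\eta$, and the statement about global cohomology is then extracted from the hypercohomology spectral sequence. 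Without this, your bound $\mathrm{codim}\,\Sigma^i \ge k-n+i$ --- and hence (i) --- has no proof. Second, for (ii)--(iii) you invoke a Castelnuovo--de~Franchis/Catanese-type fibration theorem; that is the right family of ideas, but it applies only after one \emph{proves} that the $1$-forms conjugate to the tangent directions of $Z$ form a suitably isotropic (wedge-degenerate) system, which is a genuine step in \cite{GL2} that your sketch skips, and the bound $\dim Y \le n-i$ for general $i$ requires the higher-dimensional generalization of Castelnuovo--de~Franchis rather than the classical statement. These gaps do not affect the overall strategy, which is the correct one, but they are exactly the places where the cited papers do nontrivial work.
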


\begin{Corollary}\label{rmk}
Let $X$ be a smooth projective variety of m.A.d. and of dimension $d$. Then
\begin{itemize}
\item[(i)]{$h^i(X, \omega_X \otimes \alpha) = h^i(\mathrm{Alb}(X), (\mathrm{alb}_X)_*\omega_X \otimes \alpha)$ for $\alpha \in \mathrm{Pic}^0 (X), i \geq 0$, and $(\mathrm{alb}_X)_*\omega_X $ is a GV-sheaf, thus
    $$V^0(\omega_X, \mathrm{alb}_X) \supset V^1(\omega_X, \mathrm{alb}_X) \supset \cdot\cdot\cdot \supset V^d(\omega_X, \mathrm{alb}_X);$$}
\item[(ii)]{$R\Phi_{\mathcal{P}}((\mathrm{alb}_X)_*\mathcal{O}_X)[d] \cong R^d\Phi_{\mathcal{P}}((\mathrm{alb}_X)_*\mathcal{O}_X)$ is a sheaf, which we denote by $\widehat{\mathcal{O}_X}$;}
\item[(iii)]{$R^i\Phi_{\mathcal{P}}((\mathrm{alb}_X)_*\omega_X) \cong (-1)^*_{\mathrm{Pic}^0(X)}\mathcal{E}xt^i(\widehat{\mathcal{O}_X}, \mathcal{O}_{\mathrm{Pic}^0(X)})$;}
\item[(iv)]{$p_g(X) > \chi(\omega_X)$.}
\end{itemize}
\end{Corollary}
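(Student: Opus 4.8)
The plan is to dispatch the four assertions using the generic vanishing theorem (Theorem \ref{gv}) and the Fourier--Mukai formalism as the main engines, treating them in turn; I expect the main difficulty to sit in (ii)--(iii). Throughout set $a := \mathrm{alb}_X$.

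For (i), since $X$ has maximal Albanese dimension, $a$ is generically finite onto its image, so $R^j a_*\omega_X = 0$ for $j>0$ by Koll\'ar's vanishing theorem; the Leray spectral sequence collapses and the projection formula gives $h^i(X,\omega_X\otimes\alpha) = h^i(\mathrm{Alb}(X), a_*\omega_X\otimes\alpha)$ for all $\alpha$, whence $V^i(a_*\omega_X) = V^i(\omega_X,a)$. As $\dim a(X) = \dim X = d$, Theorem \ref{gv}(i) yields $\mathrm{codim}\,V^i(\omega_X,a)\ge i$, i.e. $\mathrm{gv}^i(a_*\omega_X)\ge 0$ for every $i>0$, so $a_*\omega_X$ is a GV-sheaf. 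The chain $V^0\supseteq\cdots\supseteq V^d$ is the nesting of the cohomological support loci of the canonical bundle, a classical consequence of generic vanishing: by Serre duality these loci are the negatives of those of $\mathcal{O}_X$, and the descending property follows from the exactness of the derived complexes of Theorem \ref{gv}(iv) together with the subtorus structure of Theorem \ref{gv}(ii). Finally $V^d = \{0\}$, since $h^d(\omega_X\otimes\alpha) = h^0(\alpha^{-1})$ is positive exactly for $\alpha = 0$.

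Assertions (ii) and (iii) are two readings of a single duality computation, which is where I expect most of the work to lie. Write $\mathcal{F} = a_*\omega_X$. Grothendieck--Serre duality along $a$, using $R\mathcal{H}om_X(\omega_X,\omega_X[d])\cong\mathcal{O}_X[d]$ and $\omega_{\mathrm{Alb}(X)}\cong\mathcal{O}$, identifies $R\mathcal{H}om(\mathcal{F},\mathcal{O})$ with $(\mathrm{alb}_X)_*\mathcal{O}_X$ up to a shift; on the other hand the GV property from (i) is equivalent, by the Pareschi--Popa characterization (\cite{PP2}), to $R\Phi_{\mathcal{P}}\bigl(R\mathcal{H}om(\mathcal{F},\mathcal{O})\bigr)$ being concentrated in a single cohomological degree. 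Transporting this through the duality shows $R\Phi_{\mathcal{P}}((\mathrm{alb}_X)_*\mathcal{O}_X)$ is concentrated in degree $d$, which is (ii), and I set $\widehat{\mathcal{O}_X} := R^d\Phi_{\mathcal{P}}((\mathrm{alb}_X)_*\mathcal{O}_X)$. Feeding this concentration back into the compatibility of $R\Phi_{\mathcal{P}}$ with duality (\cite{Mu}) turns the cohomology sheaves of $R\Phi_{\mathcal{P}}\mathcal{F}$ into the $\mathcal{E}xt$-sheaves of $\widehat{\mathcal{O}_X}$, giving (iii). The delicate point is the exact bookkeeping of the shifts and of the maps $(-1)^*$ so that the cohomological index on $R^i\Phi_{\mathcal{P}}\mathcal{F}$ matches the $\mathcal{E}xt$-index $i$; the case $X = \mathrm{Alb}(X)$, where $\widehat{\mathcal{O}_X}$ is the skyscraper $\mathbb{C}(\hat 0)$ and both sides reduce to $\mathcal{E}xt^d(\mathbb{C}(\hat 0),\mathcal{O})$, is a useful check that the normalisation is right.

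For (iv) I would evaluate the derived complex of Theorem \ref{gv}(iv) at $\alpha = 0$, where it becomes $(H^\bullet(\mathcal{O}_X),\cup v)$ for $v\in H^1(\mathcal{O}_X)$. For each $i\ge 1$ the locus $V^i(\omega_X,a)$ is a proper subvariety of $\mathrm{Pic}^0(X)$ by (i), so for general $v$ the complex is exact at every position below $d$; its cohomology is therefore concentrated in degree $d$, where it equals $\mathrm{coker}\bigl(\cup v\colon H^{d-1}(\mathcal{O}_X)\to H^d(\mathcal{O}_X)\bigr)$, of dimension $(-1)^d\chi(\mathcal{O}_X) = \chi(\omega_X)$ by computing the Euler characteristic of the complex. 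Since $\dim H^d(\mathcal{O}_X) = p_g(X)$, this gives $p_g(X)-\chi(\omega_X) = \mathrm{rank}\bigl(\cup v\colon H^{d-1}(\mathcal{O}_X)\to H^d(\mathcal{O}_X)\bigr)$, so it remains to check this map is nonzero for general $v$. Under the conjugate-linear Hodge identification $H^k(\mathcal{O}_X)\cong\overline{H^0(\Omega^k_X)}$ it is the wedge product $H^0(\Omega^{d-1}_X)\otimes H^0(\Omega^1_X)\to H^0(\omega_X)$; maximal Albanese dimension forces $a$ to be generically finite, so $\wedge^d H^0(\Omega^1_X)$ has nonzero image in $H^0(\omega_X)$, and writing such a nonzero top form as $\beta\wedge\gamma$ with $\beta\in H^0(\Omega^{d-1}_X)$, $\gamma\in H^0(\Omega^1_X)$ exhibits a nonzero product. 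Hence the cup-product map has rank at least $1$ and $p_g(X)>\chi(\omega_X)$.
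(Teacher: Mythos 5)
Your route is essentially the paper's own: you reduce (i) to Koll\'ar's theorem plus Theorem \ref{gv}(i); you get (ii)--(iii) from Grothendieck duality along $\mathrm{alb}_X$ together with the Pareschi--Popa/Hacon equivalence between the GV property of $(\mathrm{alb}_X)_*\omega_X$ and the concentration in one degree of the transform of its dual, plus Mukai's exchange of duality and Fourier--Mukai transform --- which is precisely the content of the references ([Ha], [PP2]) that the paper cites for ``the details''; and you prove (iv) by the derived complex, exactly as the paper does. Your (iv) is in fact more complete than the paper's: you supply a proof of the assertion ``the right-most map is non-zero'' (the wedge-product argument is correct; just observe that non-vanishing of $\cup v$ is an open condition on $v$, so having it for one $v$ gives it for general $v$, compatibly with avoiding the finitely many proper tangent cones). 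A minor caution in (ii)--(iii): Grothendieck duality identifies $R\mathcal{H}om((\mathrm{alb}_X)_*\omega_X,\mathcal{O})$ with the \emph{derived} pushforward $R(\mathrm{alb}_X)_*\mathcal{O}_X$ up to shift, not with the underived one; you should either read the statement with $R(\mathrm{alb}_X)_*\mathcal{O}_X$ (as the sources do) or add the argument identifying the two transforms.

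The one genuine gap is your justification of the nesting $V^0\supseteq V^1\supseteq\cdots\supseteq V^d$ in (i). You claim it ``follows from the exactness of the derived complexes of Theorem \ref{gv}(iv) together with the subtorus structure of Theorem \ref{gv}(ii)'', but this mechanism cannot produce the chain. All that Theorem \ref{gv}(iv) gives, for general $v$, is exactness of the complex $(H^{\bullet}(\omega_X\otimes\alpha),\cup v)$ at every spot $1,\dots,d$; and this system of constraints is satisfied, for example, by the pattern $h^{i-1}(\omega_X\otimes\alpha)=0$, $h^{i}=h^{i+1}=1$, all other $h^{j}=0$, with $\cup v$ an isomorphism $H^{i}\to H^{i+1}$ and all other maps zero --- a pattern that violates the nesting, so no contradiction can be extracted from exactness alone. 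The subtorus structure does not help either, since the failure can occur at an isolated point of the loci. The correct proof is the commutative-algebra/Fourier--Mukai argument, i.e.\ exactly the machinery you set up for (ii)--(iii): once $R\Phi_{\mathcal{P}}(R(\mathrm{alb}_X)_*\mathcal{O}_X)\cong\widehat{\mathcal{O}_X}[-d]$ is known, Serre duality on $X$ and Mukai's equivalence give $H^{i}(X,\omega_X\otimes\alpha)^{*}\cong \mathrm{Ext}^{\,q-i}_{\mathcal{O}_{\hat{A}}}(\mathbb{C}(\beta),\widehat{\mathcal{O}_X})$ for the appropriate point $\beta$ (here $q=q(X)$, $\hat{A}=\mathrm{Pic}^0(X)$), and for a finitely generated module $M\neq 0$ over the regular local ring $\mathcal{O}_{\hat{A},\beta}$ the set $\{\,j:\mathrm{Ext}^{j}(k,M)\neq 0\,\}$ is the interval $[\mathrm{depth}\,M,\ \mathrm{injdim}\,M]$ with $\mathrm{injdim}\,M=q$; non-vanishing therefore propagates upward in the Ext-index, i.e.\ downward in $i$, which is the chain. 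So the logical order should be reversed: prove (ii)--(iii) first and deduce the nesting in (i) from them, rather than proving the nesting by derived complexes before the Fourier--Mukai statements.
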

\begin{proof}
By Koll\'{a}r's results (\cite{Ko1} Thm. 2.1, \cite{Ko2} Thm. 3.1), we have that
$$R(\mathrm{alb}_X)_*\omega_X \cong \sum_iR^i(\mathrm{alb}_X)_*\omega_X[-i]$$
and $R^i(\mathrm{alb}_X)_*\omega_X$ is torsion free if restricted to the Albanese image $\mathrm{alb}_X(X)$. We conclude that $R^i(\mathrm{alb}_X)_*\omega_X = 0$ for $i>0$ since $\mathrm{alb}_X$ is generically finite, hence $R(\mathrm{alb}_X)_*\omega_X \cong (\mathrm{alb}_X)_*\omega_X$. Using Grothendieck duality and projection formula, the assertions (i), (ii) and (iii) follow from Theorem \ref{gv}. See \cite{Ha} Thm. 1.5, 4.1 and Cor. 3.2 for the details.

For (iv), take a general $v \in H^1(\mathcal{O}_X) = T_{\hat{0}} \mathrm{Pic}^0X$. Theorem \ref{gv} (iv) tells that \emph{the derived complex} $D_v$ is exact
$$\centerline{\xymatrix{&0 \ar[r] &H^{0}(\mathcal{O}_X) \ar[r]^{\cup v} &H^{1}(\mathcal{O}_X) \ar[r]^{\cup v}&\cdot\cdot\cdot\ar[r]^{\cup v} &H^{n-1}(\mathcal{O}_X) \ar[r]^{\cup v}&H^{n}(\mathcal{O}_X)
}}$$
This implies that the cokernel of the right-most map is a linear space of dimension $\chi(\omega_X) = (-1)^n\chi(\mathcal{O}_X)$. Since $X$ is of m.A.d., the right-most map is non-zero. Therefore, $p_g(X) = h^n(\mathcal{O}_X) > \chi(\omega_X)$.
\end{proof}

\begin{Remark}[\cite{EL} Remark 1.6]
If replacing the Albanese map by a generically finite morphism to an abelian variety $a: X \rightarrow A$ and replacing $\mathrm{Pic}^0(X)$ by $\mathrm{Pic}^0(A)$, then the evident analogues of the results in Corollary \ref{rmk} hold.
\end{Remark}

\begin{Proposition}\label{cnm}
Let $a: X \rightarrow A$ be a generically finite morphism from a smooth projective variety onto an abelian variety $A$. Suppose that $\chi(\omega_X) > 0$. Then for any $n>0$ the pluri-canonical map $\phi_{nK_X}$ does not factor through  $a$ rationally.
\end{Proposition}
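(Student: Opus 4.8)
The plan is to argue by contradiction after first reformulating the conclusion as a statement about separation of points in the fibres of $a$. Since $a$ is generically finite, a rational map defined by a linear system on $X$ factors through $a$ precisely when it is constant along the general fibre of $a$; equivalently, $\phi_{nK_X}$ factors through $a$ rationally if and only if $|nK_X|$ fails to separate the points of a general fibre $a^{-1}(\alpha_0) = \{x_1, \dots, x_d\}$. So I would suppose that $|nK_X|$ does not separate some $x_1 \neq x_2$ lying over a general point $\alpha_0 \in A$, and reach a contradiction by exhibiting a section of $nK_X$ that separates them. First observe that $d = \deg a \geq 2$: were $a$ birational, then $\chi(\omega_X) = \chi(\mathcal{O}_A) = 0$, contrary to hypothesis.

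Next I extract positivity from $\chi(\omega_X) > 0$. As $a$ is generically finite onto $A$, the variety $X$ is of maximal Albanese dimension, so Corollary \ref{rmk}(i) gives $h^i(\omega_X \otimes \alpha) = 0$ for $i > 0$ and general $\alpha$, whence $h^0(\omega_X \otimes \alpha) = \chi(\omega_X) > 0$ for general $\alpha$; that is, $V^0(\omega_X, a) = \mathrm{Pic}^0(A)$. Moreover Corollary \ref{rmk}(iv) gives $p_g(X) > \chi(\omega_X) \geq 1$, so $a_*\omega_X$ is strictly larger than its trace summand $\omega_A \cong \mathcal{O}_A$; writing $a_*\omega_X \cong \omega_A \oplus \mathcal{F}$ via the trace map, the complement $\mathcal{F}$ is a nonzero GV-sheaf with $\chi(\mathcal{F}) = \chi(\omega_X) > 0$.

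For $n \geq 2$ I would separate $x_1, x_2$ by a multiplication argument. The aim is to choose $\alpha \in \mathrm{Pic}^0(A)$ general together with a section $s_1 \in H^0(\omega_X \otimes \alpha)$ vanishing at $x_1$ but not at $x_2$ and a section $s_2 \in H^0((n-1)K_X \otimes \alpha^{-1})$ not vanishing at $x_2$; the product $s_1 s_2 \in H^0(nK_X)$ then separates $x_1$ from $x_2$. Since $H^0(\omega_A \otimes \alpha) = 0$ for general $\alpha \neq 0$, one has $H^0(\omega_X \otimes \alpha) = H^0(\mathcal{F} \otimes \alpha)$, so both $s_1$ and $s_2$ should be produced from the complement $\mathcal{F}$: here I would show that the GV-sheaf $\mathcal{F}$ is in fact M-regular (this is where $\chi(\mathcal{F}) > 0$ must be used crucially), so that by Proposition \ref{cgg} it is continuously globally generated, and its twists $\mathcal{F} \otimes \alpha^{\pm 1}$ generate the two distinct stalks of $\omega_X$ at $x_1$ and $x_2$ independently. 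The surjection of Proposition \ref{sjt}(i) applied to the M-regular $\mathcal{F}$ is the technical input guaranteeing this.

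The base case $n = 1$ is where I expect the real difficulty, and it is the main obstacle. No twist is available to be cancelled, and continuous global generation does \emph{not} control the untwisted fibre $H^0(\mathcal{F})$ — for instance in the double-cover model $\omega_X = a^*\Theta$ the relevant summand is the principal polarization $\Theta$, which is M-regular yet not globally generated, so $\phi_{K_X}$ is forced to rely on the \emph{trace} summand $\omega_A \cong \mathcal{O}_A$ rather than on $\mathcal{F}$. In that model a section of $\omega_X$ taking different values on $x_1$ and $x_2$ comes from the ramification (anti-invariant) section guaranteed by $p_g > \chi$, and separation results from combining it with the pulled-back theta sections. Making this precise in general — upgrading the generic-vanishing positivity of $a_*\omega_X$ to a genuine untwisted statement that distinguishes the sheets of $a$ over a general point, and in particular pinning down that $\chi(\omega_X) > 0$ prevents a spurious second $\mathcal{O}_A$-summand from appearing in $\mathcal{F}$ — is the crux of the argument.
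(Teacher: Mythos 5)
Your proposal does not prove the statement, and both of its halves have genuine gaps. The decisive one is in your case $n \geq 2$: the claim that the summand $\mathcal{F}$ in $a_*\omega_X \cong \omega_A \oplus \mathcal{F}$ is M-regular because it is a GV-sheaf with $\chi(\mathcal{F}) > 0$ is false in the generality of Proposition \ref{cnm}. M-regularity requires $\mathrm{codim}\, V^i(\mathcal{F}) > i$ for every $i>0$, and positivity of $\chi$ does not exclude codimension-one components of $V^1$. Concretely, let $C \rightarrow E$ be a ramified double cover of an elliptic curve by a genus-$2$ curve, let $S \rightarrow B$ be a double cover of an abelian surface branched along a smooth ample divisor, and set $X = C \times S$, $A = E \times B$ with $a$ the product map. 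Then $\chi(\omega_X) = \chi(\omega_C)\chi(\omega_S) > 0$, but by the K\"unneth formula $V^1(\omega_X, a) \supset \{\hat{0}\} \times \hat{B}$ (since $h^1(C,\omega_C) \neq 0$ and $h^0(S,\omega_S \otimes \beta) \geq \chi(\omega_S) > 0$ for every $\beta$), a component of codimension $1$; so $\mathcal{F}$ is GV but not M-regular, Proposition \ref{cgg} does not apply, and your multiplication argument has no starting point. (In the body of the paper simplicity of $A$ does kill positive-dimensional $V^i$, but Proposition \ref{cnm} is stated, and used, without that hypothesis.) Second, you yourself leave the case $n=1$ open, so even granting the $n \geq 2$ step there is no proof. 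Note also that your reformulation overshoots: arguing from ``some unseparated pair over a general point'' commits you to proving that $|nK_X|$ separates \emph{every} pair over a general point of $A$, whereas non-factoring only requires that \emph{some} pair in a general fibre be separated; for $n=2$ the stronger claim is essentially Theorem \ref{spr2}, whose proof in the paper uses Proposition \ref{cnm}, so this route risks circularity in addition to being harder than necessary.

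The paper's proof is entirely different and elementary, with no Fourier--Mukai or generation machinery. The hypothesis $\chi(\omega_X)>0$ enters only through Corollary \ref{rmk} (iv), giving $p_g(X) > \chi(\omega_X) \geq 1$, so $\phi_{K_X}$ is non-constant. Suppose $\phi_{K_X} = g \circ a$. After blowing up one gets morphisms $\tilde{g} \circ \tilde{a}: \tilde{X} \rightarrow \tilde{A} \rightarrow \mathbb{P}^{p_g(X)-1}$ with $|K_{\tilde{X}}| = \tilde{a}^*|M| + F$, where $|M| = \tilde{g}^*|\mathcal{O}_{\mathbb{P}^{p_g(X)-1}}(1)|$. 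On the other hand, since $\omega_A \cong \mathcal{O}_A$, the ramification formula gives $K_{\tilde{X}} \equiv \tilde{R} + \tilde{a}^*E$ with $E$ exceptional over $A$, whence $\tilde{a}^*M \leq \tilde{R} + \tilde{a}^*E$ for some member $M \in |M|$. But $M$ has a component $M_0$ not contained in $E$ (divisors supported on the exceptional locus cannot move, while $|M|$ does), and any prime divisor $D \subset \tilde{X}$ dominating $M_0$ occurs in $\tilde{a}^*M$ with multiplicity at least its ramification index $e_D$, in $\tilde{R}$ with multiplicity exactly $e_D - 1$, and in $\tilde{a}^*E$ with multiplicity $0$ --- a contradiction. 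Since $nK_{\tilde{X}} \equiv n\tilde{R} + n\tilde{a}^*E$, the identical multiplicity count handles every $n>0$ at once, which is why the paper reduces immediately to the canonical map. The true mechanism is thus the ramification divisor measured against the triviality of $\omega_A$, not continuous global generation; this is precisely what makes the proposition valid in a generality where M-regularity can genuinely fail.
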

\begin{proof}
We only need to consider the canonical map. Since $X$ is of m.A.d., we have
$p_g(X) > 1$ by Corollary \ref{rmk} (iv), and thus the canonical map $\phi_{K_X}$ is not constant.

Assume to the contrary that $\phi_{K_X} = g \circ a$ where $g: A \dashrightarrow \mathbb{P}^{p_g(X) - 1}$. By blowing up $X$ and $A$, we get a birational model of $g \circ a: X \rightarrow A \dashrightarrow \mathbb{P}^{p_g(X) - 1}$
$$\tilde{g} \circ \tilde{a}: \tilde{X} \rightarrow \tilde{A} \rightarrow \mathbb{P}^{p_g(X) - 1}$$
such that both $\tilde{g}$ and $\tilde{a}$ are morphisms and
$$|K_{\tilde{X}}| = (\tilde{g} \circ \tilde{a})^* |\mathcal{O}_{\mathbb{P}^{p_g(X) - 1}}(1)| + F = \tilde{a}^*|M| + F~ \mathrm{where}~ |M| = \tilde{g}^* |\mathcal{O}_{\mathbb{P}^{p_g(X) - 1}}(1)|$$

Denote by $\tilde{R}$ the ramification divisor of $\tilde{g}: \tilde{X} \rightarrow \tilde{A}$. Then $K_{\tilde{X}} \equiv \tilde{R} + \tilde{a}^*E$ where $E$ is an effective divisor on $\tilde{A}$ exceptional w.r.t. the blowing up map $\tilde{A} \rightarrow A$. So there exists $M \in  |M|$ such that $\tilde{R} + \tilde{a}^*E - \tilde{a}^*M$ is an effective divisor. Notice that $M$ is not contained in $E$. We get a contradiction by the property of the ramification divisor.
\end{proof}

\begin{Definition}[\cite{Ca} Def. 1.24]\label{prm}
Let $X$ be an irregular variety of m.A.d.. It is called \emph{primitive} if $V^i(\omega_X, \mathrm{alb}_X)$ is composed of at most finitely many points for $i>0$.
\end{Definition}

\subsection{Characterization of a theta divisor}
Imitating the proof of \cite{BLNP} Prop. 3.1, we can prove
\begin{Proposition}\label{refp}
Let $X$ be a smooth projective variety of general type equipped with a generically finite morphism $a: X \rightarrow A$ to an abelian variety $A$. Suppose that
\begin{itemize}
\item[(i)]{$\dim V^1(\omega_X, a) = 0$;}
\item[(ii)]{$\dim X < \dim A$ and $a^*: \mathrm{Pic}^0(A) \rightarrow \mathrm{Pic}^0(X)$ is an embedding; and}
\item[(iii)]{$\chi(X, \omega_X) = 1$.}
\end{itemize}
Then A is a p.p.a.v., and $a: X \rightarrow A$ birationally maps $X$ to a theta divisor on $A$.
\end{Proposition}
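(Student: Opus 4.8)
The plan is to realize $a_*\omega_X$ as an M-regular sheaf on $A$, pass to its Fourier--Mukai transform, and show that this transform is a line bundle inducing a principal polarization on $\hat{A}$; dualizing back across $\hat{\hat{A}}=A$ will recover the principal polarization on $A$ together with the theta divisor. First I would verify that $\mathcal{F}:=a_*\omega_X$ is M-regular. By the generically finite analogue of Corollary \ref{rmk}(i) (see the Remark following it), $\mathcal{F}$ is a GV-sheaf with nested support loci $V^0(\omega_X,a)\supseteq V^1(\omega_X,a)\supseteq\cdots\supseteq V^{\dim X}(\omega_X,a)$, and $V^i(\omega_X,a)=\emptyset$ for $i>\dim X$. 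Hypothesis (i) forces each $V^i(\omega_X,a)$ with $i>0$ to be finite, so that $\mathrm{codim}_{\hat{A}}V^i(\omega_X,a)=\dim A>\dim X\ge i$ whenever it is nonempty; hence $\mathrm{gv}(\mathcal{F})>0$ and $\mathcal{F}$ is M-regular, thus CGG by Proposition \ref{cgg}. Hypothesis (iii) together with generic vanishing gives $h^0(\omega_X\otimes\alpha)=\chi(\omega_X)=1$ for generic $\alpha$, so $V^0(\omega_X,a)=\hat{A}$.

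Next I would study $\hat{L}:=R^0\Phi_{\mathcal{P}}\mathcal{F}$. By base change its generic rank equals the generic value of $h^0(\omega_X\otimes\alpha)$, namely $1$; and by the generically finite analogue of Corollary \ref{rmk}(iii) it is a dual sheaf, hence reflexive. A reflexive rank-$1$ sheaf on the smooth variety $\hat{A}$ is a line bundle, so $\hat{L}$ is a line bundle. Combining Proposition \ref{sjt}(i) with $R^0\Phi_{\mathcal{P}}\mathcal{F}=\hat{L}$ yields a surjection
\[
(-1)_A^*R^{\dim A}\Psi_{\mathcal{P}}\hat{L}\twoheadrightarrow \mathcal{F}.
\]
Comparing supports through Serre duality, $H^{\dim A}(\hat{A},\hat{L}\otimes P_x)\cong H^0(\hat{A},\hat{L}^{-1}\otimes P_{-x})^{\vee}$, shows that $a(X)$ lies in the locus where $\hat{L}^{-1}$ remains effective after a $\mathrm{Pic}^0$-twist. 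The embedding hypothesis (ii) guarantees that $a(X)$ generates $A$ (otherwise $\mathrm{Pic}^0$ of a proper quotient would lie in the kernel of $a^*$), which prevents $\hat{L}^{-1}$ from being pulled back from a proper quotient of $\hat{A}$; that is, $\hat{L}^{-1}$ is non-degenerate.

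The heart of the matter, and the step I expect to be the main obstacle, is to upgrade ``$\hat{L}^{-1}$ non-degenerate'' to ``$\hat{L}^{-1}$ is a principal polarization'' and, simultaneously, to pin down $\dim A=\dim X+1$. For this I would compute $\chi(\hat{L})$ from the vanishing $\chi(R\Phi_{\mathcal{P}}\mathcal{F})=0$ (the Fourier--Mukai transform of the torsion sheaf $\mathcal{F}$ has Euler characteristic zero) together with the M-regular codimension estimates on the higher transforms $R^i\Phi_{\mathcal{P}}\mathcal{F}$, and then use the CGG property to force those higher transforms to be as small as possible. The target is $\chi(\hat{L}^{-1})=1$ with $\hat{L}^{-1}$ ample, i.e. a principal polarization $\hat{\Theta}$ on $\hat{A}$; equivalently, that the class of $a(X)$ in $A$ is the minimal class $[\Theta]^{\dim A-\dim X}/(\dim A-\dim X)!$. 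Invoking the classification of subvarieties representing minimal cohomology classes, and using that $X$ is of general type with $\chi(\omega_X)=1$ to exclude the lower-dimensional Abel--Jacobi and Fano-type cases, then forces $\dim a(X)=\dim A-1$ and identifies $a(X)$ with a theta divisor.

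Finally, once $\hat{L}^{-1}$ is a principal polarization, $R^{\dim A}\Psi_{\mathcal{P}}\hat{L}$ is a line bundle on $A$ of rank $|\chi(\hat{L})|=1$, so the displayed surjection exhibits $\mathcal{F}=a_*\omega_X$ as a quotient of a rank-$1$ sheaf; hence $a$ has degree $1$ onto its image and maps $X$ birationally onto $a(X)$. Transporting the principal polarization on $\hat{A}$ back across $\hat{\hat{A}}=A$ shows that $A$ is a p.p.a.v.\ and that $a(X)$ is its theta divisor, which completes the argument.
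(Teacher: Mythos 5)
Your opening moves are sound, and they coincide with the way the intended argument begins (the paper itself proves Proposition \ref{refp} only by reference, ``imitating the proof of \cite{BLNP} Prop.\ 3.1''): hypotheses (i)--(ii) make $\mathcal{F}=a_*\omega_X$ M-regular, the transform $\hat{L}=R^0\Phi_{\mathcal{P}}\mathcal{F}$ is rank one and reflexive (being a dual sheaf), hence a line bundle, and Proposition \ref{sjt}(i) gives the surjection $(-1)_A^*R^{\dim A}\Psi_{\mathcal{P}}\hat{L}\twoheadrightarrow\mathcal{F}$; the nondegeneracy argument from injectivity of $a^*$ can also be made rigorous. However, the two steps you yourself flag as the heart of the proof are genuine gaps, and the tools you propose cannot close them.

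First, the upgrade from ``nondegenerate'' to ``principal polarization''. The identity $\chi(R\Phi_{\mathcal{P}}\mathcal{F})=0$ only yields $\chi(\hat{L})=\sum_{q\geq 1}(-1)^{q+1}\mathrm{length}(R^q\Phi_{\mathcal{P}}\mathcal{F})$, an alternating sum of lengths of skyscrapers supported on the finite sets $V^q(\omega_X,a)$, and neither M-regularity nor CGG gives any control of these lengths. In particular they cannot be ``forced to be as small as possible'', i.e.\ zero: one always has $\hat{0}\in V^{\dim X}(\omega_X,a)$ because $h^{\dim X}(X,\omega_X)=h^0(\mathcal{O}_X)=1$, so $R^{\dim X}\Phi_{\mathcal{P}}\mathcal{F}\neq 0$; indeed, when $X$ \emph{is} a smooth theta divisor one computes $R^{\dim X}\Phi_{\mathcal{P}}(a_*\omega_X)\cong\mathbb{C}(\hat{0})$, and it is exactly this skyscraper that makes the alternating sum come out to $(-1)^{\dim A}$. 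The proof this proposition actually rests on obtains ampleness and principality by a mechanism absent from your sketch: since $h^0(\omega_X\otimes\alpha)=1$ for generic $\alpha$, the unique paracanonical divisors form a family $\mathcal{D}\subset X\times\hat{A}$, and the see-saw principle identifies $\hat{L}\cong\mathcal{O}_{\hat{A}}(-\hat{D})$ where $\hat{D}$ is an \emph{effective} divisor (a fiber $\mathcal{D}_p$); concentration of $R\Psi_{\mathcal{P}}$ then forces $\hat{D}$ to be ample. This is precisely the argument that the paper reproduces, in the more general setting $\chi(\omega_X)\geq 1$, in Lemma \ref{pre}.

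Second, and more seriously, you pin down $\dim a(X)=\dim A-1$ and identify $a(X)$ with a theta divisor by ``invoking the classification of subvarieties representing minimal cohomology classes''. No such classification exists: this is Debarre's conjecture, open in general and known only in special cases (curves of minimal class, subvarieties of Jacobians, the Fano surface of a cubic threefold), so it cannot be invoked. Moreover, even the equivalence you assert between ``$\hat{L}^{-1}$ is a principal polarization on $\hat{A}$'' and ``$a(X)$ represents the minimal class'' is itself a nontrivial theorem of Pareschi--Popa, not a tautology. The genuine argument (\cite{BLNP} Prop.\ 3.1, going back to Hacon--Pardini and \cite{EL}) never mentions minimal classes: once $\hat{L}\cong\mathcal{O}_{\hat{A}}(-\hat{\Theta})$ with $\hat{\Theta}$ a principal polarization, Mukai inversion turns the surjection of Proposition \ref{sjt}(i) into a surjection from a translate of the line bundle $\mathcal{O}_A(\Theta)$ onto $a_*\omega_X$, and the codimension-one property of the image, together with birationality of $X\rightarrow a(X)$, is extracted from an Ein--Lazarsfeld-type analysis of that surjection and the resulting ideal sheaf. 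As written, your proposal establishes neither the principality of the polarization nor $\dim a(X)=\dim A-1$, which are the two essential conclusions.
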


\begin{Corollary}\label{ref}
Let $X$ and $a: X \rightarrow A$ be as in Proposition \ref{refp}. Assume (i), (ii) in Proposition \ref{refp} and
\begin{itemize}
\item[(iii)']{for $\alpha \in U_0:=\hat{A} \setminus V^1(\omega_X, a)$, $|\omega_X \otimes \alpha| = |M| + F_\alpha$ where $M$ is the movable part which is independent of $\alpha$ and $F_\alpha$ is the fixed part.}
\end{itemize}
Then A is a p.p.a.v., and $a: X \rightarrow A$ birationally maps $X$  to a theta divisor on $A$.
\end{Corollary}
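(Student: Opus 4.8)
The plan is to deduce the missing hypothesis $\chi(X,\omega_X)=1$ of Proposition \ref{refp} from condition (iii)$'$, and then to invoke Proposition \ref{refp} verbatim. Write $d=\dim A$ and $n=\dim X$, so $n<d$ by (ii). First I would record two consequences of (i) and (ii). Since the analogue of Corollary \ref{rmk}(i) holds for the generically finite map $a$, the loci are nested, $V^1(\omega_X,a)\supseteq\cdots\supseteq V^d(\omega_X,a)$, so every $V^i(\omega_X,a)$ with $i>0$ is a finite set; hence $\mathrm{gv}^i(a_*\omega_X)=d-i\ge d-n\ge 1$ for $0<i\le n$, and $a_*\omega_X$ is \emph{M-regular}. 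In particular it is CGG by Proposition \ref{cgg} and admits the surjection of Proposition \ref{sjt}(i). Moreover, for generic $\alpha\in U_0$ the higher cohomology $h^i(\omega_X\otimes\alpha)$ ($i>0$) vanishes, so
$$\chi(\omega_X)=h^0(\omega_X\otimes\alpha)=h^0(M)=:m,$$
the last equality by (iii)$'$. It therefore suffices to prove $m=1$.

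Next I would exploit (iii)$'$ to rigidify the fixed part. Put $L:=\omega_X\otimes\mathcal O_X(-M)$, so that $L\otimes\alpha\cong\mathcal O_X(F_\alpha)$ with $F_\alpha$ effective for $\alpha\in U_0$. I claim $h^0(L\otimes\alpha)=1$: a second, non-proportional section would produce an effective $F'_\alpha\equiv F_\alpha$ with $F'_\alpha\neq F_\alpha$, and multiplying $H^0(M)$ into it would give divisors of $|\omega_X\otimes\alpha|$ whose common divisorial part is $F'_\alpha$; since $|M|$ has no fixed divisor, this would force $F_\alpha\le F'_\alpha$, hence $F_\alpha=F'_\alpha$, a contradiction. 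Thus $\alpha\mapsto s_{F_\alpha}$ is, up to scalar, a well-defined rank-one datum, and over $U_0$ the multiplication $H^0(M)\otimes H^0(L\otimes\alpha)\to H^0(\omega_X\otimes\alpha)$ identifies the fibre $H^0(\omega_X\otimes\alpha)$ with the \emph{fixed} $m$-dimensional space $H^0(M)$ twisted by the unique section $s_{F_\alpha}$.

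The main obstacle is to upgrade this fibrewise picture to $m=1$. My plan is to work with the Fourier--Mukai transform $\mathcal W:=R^0\Phi_{\mathcal P}(a_*\omega_X)$. By base change it is a coherent sheaf, locally free of rank $m$ over $U_0$, and the previous paragraph gives an isomorphism $\mathcal W|_{U_0}\cong\mathcal L^{\oplus m}$, where $\mathcal L:=R^0\Phi_{\mathcal P}(a_*L)|_{U_0}$ is a rank-one sheaf. Applying the Mukai inversion $(-1)_A^*R^d\Psi_{\mathcal P}$ together with the surjection of Proposition \ref{sjt}(i) and Theorem \ref{Mu}, I would recover $a_*\omega_X$ as a quotient of an $m$-fold sum built from the single rank-one object $(-1)_A^*R^d\Psi_{\mathcal P}\mathcal L$. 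The decisive point is then that $a_*\omega_X$ is supported on the \emph{irreducible} subvariety $a(X)$ of dimension $n<d$; comparing supports and generic ranks of the two sides should force the $m$ summands to coincide, i.e. $m=1$. I expect this step to be the crux, since it is exactly where (iii)$'$ must do the work that $\chi=1$ does in Proposition \ref{refp}.

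As a fallback for the same step I would keep in reserve the following alternative: a nontrivial fixed movable part $|M|$ makes the map $\phi_{|M|}=\phi_{|\omega_X\otimes\alpha|}$ independent of $\alpha$, and one tries to show it factors rationally through $a$, contradicting Proposition \ref{cnm} (which applies since $\chi(\omega_X)=m>0$). The subtlety here is that $\phi_{|M|}$ is only a twisted paracanonical map rather than a genuine pluricanonical one, so the Fourier--Mukai route above seems cleaner. Once $m=1$, i.e. $\chi(\omega_X)=1$, hypothesis (iii) of Proposition \ref{refp} holds, and its conclusion --- that $A$ is a p.p.a.v. and $a$ maps $X$ birationally onto a theta divisor --- follows immediately.
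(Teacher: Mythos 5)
Your overall strategy coincides with the paper's: reduce (iii)$'$ to the missing hypothesis $\chi(\omega_X)=1$ and then quote Proposition \ref{refp}. Your preliminary steps are also sound: $\chi(\omega_X)=h^0(M)=:m$ for generic $\alpha$, the rank-one claim $h^0(\omega_X(-M)\otimes\alpha)=1$ on $U_0$, and the resulting identification $\mathcal W|_{U_0}\cong\mathcal L^{\oplus m}$. But the step you yourself flag as the crux is a genuine gap, and the mechanism you propose for it cannot work. First, a technical (in principle repairable) problem: Mukai inversion $R^d\Psi_{\mathcal P}$ needs the \emph{global} sheaf $\mathcal W=R^0\Phi_{\mathcal P}(a_*\omega_X)$, whereas you only control it on $U_0$; since $\hat A\setminus U_0$ has codimension $\ge 2$, the restriction to $U_0$ does not determine $\mathcal W$ (one would have to pass to reflexive hulls, getting $\mathcal W\subseteq (L'')^{\oplus m}$ with point-supported quotient for some line bundle $L''$). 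Second, and fatally: even granting a surjection
$$\bigl((-1)_A^*R^d\Psi_{\mathcal P}L''\bigr)^{\oplus m}\longrightarrow a_*\omega_X,$$
comparing supports and generic ranks imposes \emph{no} upper bound on $m$. By hypothesis (ii) we have $\dim a(X)=\dim X<\dim A$, so $a_*\omega_X$ is a torsion $\mathcal O_A$-module of generic rank $0$; any sheaf quotient of a single copy $G$ is also a quotient of $G^{\oplus m}$ for every $m$ (project to one summand), so ``the $m$ summands must coincide'' is not a consequence of any support or rank count. This is exactly the point where the paper does real work: it constructs the universal fixed-part divisor $\bar{\mathcal Y}\subset X\times\hat A$ from (iii)$'$, computes $\mathcal O(\bar{\mathcal Y})\equiv p_1^*\omega_X(-M)\otimes\mathcal P_a\otimes p_2^*\mathcal O_{\hat A}(\bar{\mathcal Y}_p)$ by the see-saw principle (\cite{BLNP} Lemma 5.2), and then runs the argument of \cite{BLNP} Lemma 5.3. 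The decisive inputs there are the \emph{effectivity} of the divisor $\hat D=\bar{\mathcal Y}_p$ (positivity that your fibrewise analysis never produces, since for you $L''$ is just an abstract line bundle), the concentration of $R\Psi_{\mathcal P}\widehat{\mathcal O_X}$ in the single degree $\dim A-\dim X$, and the structure theory of (possibly degenerate) line bundles on abelian varieties; a pure rank/support comparison is not enough.

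Your fallback is also a dead end, for a reason beyond the one you mention: Proposition \ref{cnm} requires $a$ to be generically finite \emph{onto} the abelian variety $A$, i.e.\ surjective, whereas here $\dim X<\dim A$ by (ii), so that proposition simply does not apply in the setting of Corollary \ref{ref} (in addition to your correct observation that $\phi_{|M|}$ is a twisted paracanonical, not pluricanonical, map). So the proposal establishes the correct reduction and some useful intermediate facts, but the passage from the fibrewise rank-one structure to $\chi(\omega_X)=1$ --- the actual content of the corollary --- is missing, and the suggested replacement argument fails.
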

\begin{proof}
Assumption (iii)' implies that $\mathcal{B}:= \{(x, \alpha) \in X \times U_0| x \in F_\alpha\}$ is a divisor in $X \times U_0$. Denote by $\bar{\mathcal{Y}}$ the closure of $\mathcal{B}$ in $X \times \hat{A}$. Noticing that $\mathrm{codim}_{\hat{A}}V^1(\omega_X, a)\geq 2$, by the see-saw principle we have (\cite{BLNP} Lemma 5.2)
$$\bar{\mathcal{Y}} \equiv p_1^* \omega_X(-M) \otimes \mathcal{P}_a \otimes p_2^*\mathcal{O}_{\hat{A}}(\bar{\mathcal{Y}}_p)$$
where $p \in X$ is a point mapped to $0 \in A$ via $a$.

With these settings, by similar argument as in \cite{BLNP} Lemma 5.3, we can show that $\chi(X, \omega_X) = 1$. Then we are done by Proposition \ref{refp}.
\end{proof}

\subsection{Universal divisors and separation} \label{rzh}
Recall the following results from \cite{Zh} Sec. 3.
\begin{Theorem}[\cite{Zh} Theorem 2.10]\label{pf}
Let $X$ and $Y$ be two projective normal varieties, and $\mathcal{L}$ a line bundle on $X \times Y$. Assume $E= (p_2)_*\mathcal{L}$ is a vector bundle and put $P = \mathbb{P}_Y(E)$.
Note that there exists an open set $U \subset P$ parametrizing the divisors in $|\mathcal{L}_y|, y \in Y$. Denote by $\mathcal{D} \subset X \times U$ the universal family. Then its closure $\bar{\mathcal{D}} \subset X
\times P$ is a divisor, and
$$\bar{\mathcal{D}} \equiv p^*\mathcal{L} \otimes q^*\mathcal{O}_P(1)$$
where $p,q$ denote the two projections $p: X \times P \rightarrow X
\times Y$, $q: X \times P \rightarrow P$.
\end{Theorem}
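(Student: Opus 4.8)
The plan is to realize $\bar{\mathcal{D}}$ as the zero divisor of a single explicit ``universal'' section of $p^*\mathcal{L}\otimes q^*\mathcal{O}_P(1)$; once that identification is in place the linear equivalence is immediate. First I would unwind the convention $P=\mathbb{P}_Y(E)=\mathrm{Proj}_{\mathcal{O}_Y}(\oplus_k S^k(E^*))$. Writing $\pi:P\to Y$ for the structure map, the tautological quotient $\pi^*E^*\twoheadrightarrow\mathcal{O}_P(1)$ dualizes to a tautological inclusion $\mathcal{O}_P(-1)\hookrightarrow\pi^*E$ whose fibre over a point $u\in P$ is the line in $E_{\pi(u)}$ that $u$ represents. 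Over the open set $U$, where $(p_2)_*\mathcal{L}$ commutes with base change and we may identify $E_y$ with $H^0(X,\mathcal{L}_y)$, such a point $u$ is thus a nonzero section $t_u\in H^0(X,\mathcal{L}_{\pi(u)})$ up to scalar, and the divisor it parametrizes is $\mathrm{div}(t_u)$.

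Next I would produce the section. On $X\times Y$ the counit of adjunction gives the evaluation map $\mathrm{ev}:p_2^*E=p_2^*(p_2)_*\mathcal{L}\to\mathcal{L}$, whose fibre at $(x,y)$ sends a section of $\mathcal{L}_y$ to its value at $x$. Since $p_2\circ p=\pi\circ q$ as maps $X\times P\to Y$ (here $p=\mathrm{id}_X\times\pi$), pulling $\mathrm{ev}$ back along $p$ yields $q^*\pi^*E\to p^*\mathcal{L}$, and precomposing with $q^*$ of the tautological inclusion $\mathcal{O}_P(-1)\hookrightarrow\pi^*E$ gives a morphism $q^*\mathcal{O}_P(-1)\to p^*\mathcal{L}$. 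This is the same datum as a global section
$$s\in H^0\big(X\times P,\ p^*\mathcal{L}\otimes q^*\mathcal{O}_P(1)\big).$$
By construction, for $u\in U$ the restriction $s|_{X\times\{u\}}$ is a scalar multiple of $t_u$, so its vanishing on the fibre is exactly $\mathrm{div}(t_u)$. Hence $s$ is not identically zero, its zero locus $Z(s)$ is an effective divisor in the class $p^*\mathcal{L}\otimes q^*\mathcal{O}_P(1)$, and $Z(s)\cap(X\times U)=\mathcal{D}$.

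It then remains to upgrade this to $Z(s)=\bar{\mathcal{D}}$, which simultaneously shows $\bar{\mathcal{D}}$ is a divisor and yields $\bar{\mathcal{D}}\equiv Z(s)\equiv p^*\mathcal{L}\otimes q^*\mathcal{O}_P(1)$. Since $U$ is dense in $P$, the inclusion $\mathcal{D}\subseteq Z(s)$ passes to closures, giving $\bar{\mathcal{D}}\subseteq Z(s)$ with $\bar{\mathcal{D}}$ pure of codimension one. The hard part will be ruling out any extra component of $Z(s)$: such a component lies in $X\times(P\setminus U)$ yet has codimension one, so it must be ``vertical'', of the form $X\times W$ with $W\subset P$ of codimension one. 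I would exclude this by observing that $X\times W\subseteq Z(s)$ forces $s$ to vanish identically on the fibres $X\times\{u\}$ for $u\in W$, which contradicts that $s|_{X\times\{u\}}$ is the nonzero section cut out by the line $u$ — the point being that, at a general point of any $W$, the tautological line injects into $H^0(X,\mathcal{L}_{\pi(u)})$. The delicate issue, and the genuine obstacle, is precisely controlling the base-change map $E_y\to H^0(X,\mathcal{L}_y)$ over $P\setminus U$; once its injectivity is secured there are no vertical components, and purity gives $Z(s)=\bar{\mathcal{D}}$, completing the argument.
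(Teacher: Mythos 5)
Your construction is the natural one for this statement (which the paper itself does not prove, but quotes from \cite{Zh}): with the convention $\mathbb{P}_Y(E)=\mathrm{Proj}_{\mathcal{O}_Y}(\oplus_k S^k(E^*))$, composing $q^*$ of the tautological inclusion $\mathcal{O}_P(-1)\hookrightarrow\pi^*E$ with the pullback of the counit $p_2^*(p_2)_*\mathcal{L}\to\mathcal{L}$ does produce a section $s\in H^0(X\times P,\,p^*\mathcal{L}\otimes q^*\mathcal{O}_P(1))$ whose zero divisor restricts to $\mathcal{D}$ over $X\times U$, and your dimension count showing that any component of $Z(s)$ lying over $P\setminus U$ must have the form $X\times W$ with $W\subset P$ a prime divisor is also correct. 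However, the proof as written is incomplete at exactly the point you flag: you never establish that at a general point $u$ of such a $W$ the composite $\ell_u\hookrightarrow E_{\pi(u)}\to H^0(X,\mathcal{L}_{\pi(u)})$ is nonzero, and you explicitly defer this as ``the genuine obstacle.'' This is not a cosmetic omission: the base-change map $\phi^0(y)\colon E\otimes k(y)\to H^0(X,\mathcal{L}_y)$ can genuinely fail to be injective over a base of dimension $\geq 2$ even when $(p_2)_*\mathcal{L}$ is locally free, so some argument is mandatory, and it is the only non-formal ingredient of the whole theorem.

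The missing step is where normality of $Y$ enters, and it suffices to control codimension one. First, $W$ cannot dominate $Y$: over the open locus $Y_0\subset Y$ where cohomology and base change holds, $s$ is fiberwise nonzero, so $W$ would meet $\pi^{-1}(Y_0)\subset U$, contradicting $X\times W\subseteq Z(s)$. Hence $D:=\overline{\pi(W)}$ is a proper closed subset, and your dimension count then forces $D$ to be a prime divisor of $Y$ with $W=\pi^{-1}(D)$ and $\phi^0$ vanishing identically along $D$, in particular at the generic point $\eta$ of $D$. Now use that $Y$, being normal, is regular in codimension one, so $\mathcal{O}_{Y,\eta}$ is a DVR. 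Take the Grothendieck complex $d\colon K^0\to K^1$ of finite free $\mathcal{O}_{Y,\eta}$-modules computing the cohomology of $\mathcal{L}$ universally; then $\mathrm{im}(d)\subseteq K^1$ is torsion-free, hence free over the DVR, so the sequence $0\to\ker d\to K^0\to\mathrm{im}\,d\to 0$ remains exact after $\otimes\, k(\eta)$, which says precisely that $\phi^0(\eta)\colon (\ker d)\otimes k(\eta)\to\ker(d\otimes k(\eta))$ is injective. Since $E\otimes k(\eta)\neq 0$, this contradicts $\phi^0(\eta)=0$. Inserting this lemma (injectivity of $\phi^0$ at all codimension-one points of a normal base) closes the gap, rules out vertical components, and yields $Z(s)=\bar{\mathcal{D}}$ and the claimed linear equivalence.
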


Let $E$ be an $IT^0$-vector bundle on an abelian variety $A$. Then $R\Phi_{\mathcal{P}}E$ is a vector bundle $\hat{A}$. Its dual $(R\Phi_{\mathcal{P}}E)^*$ is an $IT^0$-vector bundle, and $R\Psi_{\mathcal{P}}(R\Phi_{\mathcal{P}}(E)^*) \cong E^*$ (cf. Corollary \ref{Muc}).

Let $P = \mathbb{P}_A(E^*)$, $\hat{P} =
\mathbb{P}_{\hat{A}}(R\Phi_{\mathcal{P}}(E))$, and denote by $\pi: P \rightarrow A$ and $\hat{\pi}: \hat{P} \rightarrow \hat{A}$ the natural projections. Note that
$$(p_2)_*(p_1^*\mathcal{O}_{P}(1)\otimes (\pi \times \mathrm{id}_{\hat{A}})^*\mathcal{P}) \cong  R\Phi_{\mathcal{P}}(E)~\mathrm{and}~(p_1)_*(p_2^*\mathcal{O}_{\hat{P}}(1)\otimes (\mathrm{id}_{A} \times \hat{\pi})^*\mathcal{P}) \cong  E^*.$$
We can identify $\hat{P}$ (resp. $P$) with the Hilbert scheme parametrizing the divisors in $\{|\mathcal{O}_{P}(1)\otimes \alpha||\alpha \in \mathrm{Pic}^0(P) = \hat{A}\}$ (resp. $\{|\mathcal{O}_{\hat{P}}(1)\otimes \hat{\alpha}||\hat{\alpha} \in \mathrm{Pic}^0(\hat{P}) = A\}$). Denote by
$\mathcal{U} \subset P \times \hat{P}$ the universal family and  by $\tilde{\mathcal{P}}$
the pull-back $(\pi \times \hat{\pi})^*\mathcal{P}$ of the Poincar\'{e} bundle on $A \times \hat{A}$. We have
\begin{itemize}
\item
$\mathcal{U} \equiv p_1^* \mathcal{O}_P(1) \otimes \tilde{\mathcal{P}} \otimes p_2^*\mathcal{O}_{\hat{P}}(1) $ (by Theorem \ref{pf});
\item
identifying a divisor in $|\mathcal{O}_{P}(1)\otimes \alpha|,\alpha \in \hat{A}$ with a point in $\hat{P}$, for every $x \in P$, the fiber $\mathcal{U}_x$ parametrizes all those divisors passing through $x$;
\item
for $x,y \in P$, $\mathcal{U}_x \equiv \mathcal{U}_y \Leftrightarrow \pi(x) = \pi(y), ~\mathrm{and}~\mathcal{U}_x = \mathcal{U}_y \Leftrightarrow x = y$.
\end{itemize}

We can write that
\begin{equation}\label{dec}
\mathcal{U}_x = \mathcal{H}_x + \mathcal{V}_x ~\text{and}~
\mathcal{V}_x= \mathcal{V}^1_x + \cdots + \mathcal{V}^r_x
\end{equation}
where $\mathcal{H}_x$ is
the horizontal part (if $\mathrm{rank}(R\Phi_{\mathcal{P}}(E)) = 1$ then $\mathcal{H}_x = \emptyset$), $\mathcal{V}_x = \hat{\pi}^*V_x$ is the vertical part ($\mathcal{V}_x = \emptyset$ if $\mathcal{U}_x$ is irreducible), and the
$\mathcal{V}^i_x = \hat{\pi}^*V^i_x$'s are the reduced and
irreducible vertical components (two of them may equal).
In fact there is a decomposition $\mathcal{U}= \mathcal{H} + \mathcal{V}$ such that for general $x \in P$, $\mathcal{U}_x = \mathcal{H}_x + \mathcal{V}_x$.

\begin{Lemma}[\cite{Zh}, Lemma 3.3]\label{spr}
Let $x,y \in P$ be two distinct points. Write that $\mathcal{U}_x = \mathcal{H}_x +
\mathcal{V}_x $ and $\mathcal{U}_y = \mathcal{H}_y +
\mathcal{V}_y$ as in \ref{dec}. Then the following conditions are equivalent
\begin{itemize}
\item[(a)]{$|\mathcal{O}_P(2)|$ fails to separate $x,y$;}
\item[(b)]{$\mathcal{H}_x= \mathcal{H}_y$ and $\mathrm{Supp}(V_x +(-1)_{\hat{A}}^*V_x) = \mathrm{Supp}(V_y +(-1)_{\hat{A}}^*V_y)$.}
\end{itemize}
\end{Lemma}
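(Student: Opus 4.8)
The plan is to test $|\mathcal{O}_P(2)|$ against the explicit family of \emph{decomposable} members coming from the universal family $\mathcal{U}$, and to show that this sub-family already detects the separation of $x$ and $y$. For $\hat z \in \hat P$ lying over $b = \hat\pi(\hat z) \in \hat A = \mathrm{Pic}^0(P)$, write $\mathcal{U}^{\hat z} \subset P$ for the slice $\mathcal{U} \cap (P \times \{\hat z\})$; by the identification of $\hat P$ with the Hilbert scheme of the systems $|\mathcal{O}_P(1)\otimes b|$, the slice $\mathcal{U}^{\hat z}$ runs through all of $|\mathcal{O}_P(1)\otimes b|$ as $\hat z$ runs through the fibre $\hat\pi^{-1}(b)$. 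Consequently, for any $\hat z, \hat w$ with $\hat\pi(\hat w) = -\hat\pi(\hat z)$ one has $\mathcal{U}^{\hat z} + \mathcal{U}^{\hat w} \in |\mathcal{O}_P(2)|$, and these decomposable members form a sub-system parametrized by the irreducible incidence variety $\mathcal{N} = \{(\hat z, \hat w) : \hat\pi(\hat w) = -\hat\pi(\hat z)\}$. The basic input is the dictionary recorded in the excerpt, $x \in \mathcal{U}^{\hat z} \Leftrightarrow \hat z \in \mathcal{U}_x$, which converts membership of $x$ in such a divisor into membership of $\hat z$ (or $\hat w$) in $\mathcal{U}_x$.

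First I would dispose of (a)$\Rightarrow$(b). Since every decomposable member lies in $|\mathcal{O}_P(2)|$, if the full system fails to separate $x, y$ then so does each decomposable member; evaluating the corresponding section shows that for all $(\hat z, \hat w) \in \mathcal{N}$,
$$(\ast)\qquad (\hat z \in \mathcal{U}_x\ \text{or}\ \hat w \in \mathcal{U}_x)\ \Longleftrightarrow\ (\hat z \in \mathcal{U}_y\ \text{or}\ \hat w \in \mathcal{U}_y).$$
I would then decode $(\ast)$ using $\mathcal{U}_x = \mathcal{H}_x + \hat\pi^*V_x$. Choosing $\hat z$ over a general $b$ and $\hat w$ over $-b$ generically inside their fibres, one isolates the horizontal contribution and recovers $\mathcal{H}_x = \mathcal{H}_y$; the constraint $\hat\pi(\hat w) = -\hat\pi(\hat z)$ forces the two vertical slices to read off $V_x$ at $b$ and at $-b$, so the remaining content of $(\ast)$ is exactly $b \in \mathrm{Supp}(V_x + (-1)_{\hat A}^*V_x) \Leftrightarrow b \in \mathrm{Supp}(V_y + (-1)_{\hat A}^*V_y)$ for general $b$, i.e. the second half of (b). This yields $(\ast)\Leftrightarrow$(b), hence (a)$\Rightarrow$(b); the same computation gives (b)$\Rightarrow(\ast)$, so under (b) the decomposable sub-system does not separate $x, y$.

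The substance is the converse (b)$\Rightarrow$(a): I must upgrade non-separation by the decomposable sub-system to non-separation by all of $|\mathcal{O}_P(2)|$. The plan is a non-degeneracy argument. Regard the evaluation functionals $\mathrm{ev}_x, \mathrm{ev}_y \in H^0(P, \mathcal{O}_P(2))^*$ in local trivializations at $x, y$, so that (a) is the statement $\ker\mathrm{ev}_x = \ker\mathrm{ev}_y$. The map $\nu: \mathcal{N} \to \mathbb{P}(H^0(P, \mathcal{O}_P(2)))$ sending $(\hat z, \hat w)$ to the class of $\mathcal{U}^{\hat z} + \mathcal{U}^{\hat w}$ has irreducible image, and $(\ast)$ says precisely that the hyperplanes cut out by $\mathrm{ev}_x$ and by $\mathrm{ev}_y$ meet $\nu(\mathcal{N})$ along the same locus. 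Provided $\nu(\mathcal{N})$ is non-degenerate, this forces $\mathrm{ev}_x$ and $\mathrm{ev}_y$ to be proportional, i.e. (a). Non-degeneracy of $\nu(\mathcal{N})$ is equivalent to the decomposable sections spanning $H^0(P, \mathcal{O}_P(2))$, i.e. to surjectivity of
$$\bigoplus_{\beta \in \hat A} H^0\!\big(\mathcal{O}_P(1)\otimes\beta\big) \otimes H^0\!\big(\mathcal{O}_P(1)\otimes\beta^{-1}\big) \longrightarrow H^0\!\big(\mathcal{O}_P(2)\big).$$

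I expect this surjectivity to be the main obstacle. I would prove it from the hypothesis that $E$ is $IT^0$: then $E$ is M-regular, hence continuously globally generated by Proposition \ref{cgg}, so the sections of $\mathcal{O}_P(1)$ with all possible $\mathrm{Pic}^0$-twists continuously generate $\mathcal{O}_P(1)$; feeding the $\beta$- and $(-\beta)$-twisted systems into the multiplication map, and using the surjectivity of multiplication maps for M-regular sheaves (cf. \cite{PP2}, compatibly with Corollary \ref{Muc}), should yield the displayed surjectivity, where base-point freeness already holds fibrewise via the Veronese embedding of each $\mathbb{P}(E^*_a)$. A secondary technical point is that $(\ast)$ a priori gives only an equality of \emph{supports} of the two hyperplane sections on $\nu(\mathcal{N})$; I would remove the multiplicity ambiguity by choosing the probing pairs $(\hat z, \hat w)$ so that the incidences are generically reduced, whence support-equality promotes to divisor-equality and the rational function $\ell_x/\ell_y$ becomes a nowhere-zero regular function on the irreducible projective image $\nu(\mathcal{N})$, hence constant, giving the desired proportionality.
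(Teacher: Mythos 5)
The paper does not actually prove Lemma \ref{spr}; it imports it from \cite{Zh} (Lemma 3.3), so your argument has to stand on its own. Its architecture is the natural one and is surely the one behind the cited proof: reduce separation by $|\mathcal{O}_P(2)|$ to separation by the decomposable divisors $\mathcal{U}^{\hat z}+\mathcal{U}^{\hat w}$, $\hat\pi(\hat w)=-\hat\pi(\hat z)$, using that these span $H^0(\mathcal{O}_P(2))$, and then decode incidence through $\mathcal{U}_x=\mathcal{H}_x+\hat\pi^*V_x$. Your reduction of the spanning statement to Pareschi--Popa's continuous multiplication theorem is correct ($E$ is $IT^0$, hence M-regular and CGG, and the image of $\bigoplus_\beta H^0(E\otimes\beta)\otimes H^0(E\otimes\beta^{-1})\to H^0(E\otimes E)\to H^0(S^2E)=H^0(\mathcal{O}_P(2))$ is spanned by the decomposable sections), and the direction (a)$\Rightarrow$(b) goes through as you sketch; the point you leave implicit --- that one gets $\mathcal{H}_x=\mathcal{H}_y$ as divisors, not just as sets --- is harmless, because $\mathcal{O}_{\hat P}(1)$ has degree $1$ on the fibres of $\hat\pi$, so horizontal parts meet general fibres in reduced hyperplanes and are determined by their supports.

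The genuine gap is in (b)$\Rightarrow$(a), exactly at the point you flag and then dismiss. Let $q_1,q_2:\mathcal{N}\to\hat P$ be the two projections and $\rho:=\hat\pi\circ q_1$, so $\hat\pi\circ q_2=(-1)_{\hat A}\circ\rho$. The assignment $(\hat z,\hat w)=([s],[t])\mapsto (st)(x)$ is, after trivializing $\mathcal{O}_P(2)|_x$, a section $\tau_x$ of $q_1^*\mathcal{O}_{\hat P}(1)\otimes q_2^*\mathcal{O}_{\hat P}(1)$ on $\mathcal{N}$ (the Poincar\'e factors cancel along $\mathcal{N}$, so this bundle is the \emph{same} for $x$ and $y$), with divisor $q_1^*\mathcal{U}_x+q_2^*\mathcal{U}_x=q_1^*\mathcal{H}_x+q_2^*\mathcal{H}_x+\rho^*\bigl(V_x+(-1)_{\hat A}^*V_x\bigr)$. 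Granting the spanning, (a) is equivalent to all evaluation vectors $((st)(x),(st)(y))$ lying on one line of $\mathbb{C}^2$, i.e.\ to $\tau_x\propto\tau_y$, i.e.\ to the \emph{divisor} equality $\mathrm{div}(\tau_x)=\mathrm{div}(\tau_y)$. But (b) only gives $\mathrm{Supp}\bigl(V_x+(-1)_{\hat A}^*V_x\bigr)=\mathrm{Supp}\bigl(V_y+(-1)_{\hat A}^*V_y\bigr)$. If the multiplicities differ along a common component, then $\tau_y/\tau_x$ is a non-constant rational function on $\mathcal{N}$: for each fixed $\beta\in\hat A$ condition (b) does put the vectors on a line $L_\beta$ (this is what your decoding buys), but $L_\beta$ rotates with $\beta$, the vectors altogether span $\mathbb{C}^2$, and $|\mathcal{O}_P(2)|$ separates $x,y$ even though no single decomposable member does. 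This is the general phenomenon that set-theoretic equality of hyperplane sections of a non-degenerate irreducible variety does not force the hyperplanes to coincide (the two branch tangents of a nodal plane cubic cut the cubic in the same set, the node). Your proposed repair --- ``choosing the probing pairs $(\hat z,\hat w)$ so that the incidences are generically reduced'' --- is empty: the vertical multiplicities of $\mathrm{div}(\tau_x)$ are those of $V_x+(-1)_{\hat A}^*V_x$, intrinsic to $x$, and no choice of probe points alters them. In effect your method, carried out correctly, proves that (a) is equivalent to the \emph{stronger} condition $\mathcal{H}_x=\mathcal{H}_y$ and $V_x+(-1)_{\hat A}^*V_x=V_y+(-1)_{\hat A}^*V_y$ as divisors; what is missing is precisely the bridge from (b) to this divisor equality. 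Note that $\mathcal{H}_x=\mathcal{H}_y$ does force $V_x+(-1)_{\hat A}^*V_x\sim V_y+(-1)_{\hat A}^*V_y$, but linearly equivalent symmetric effective divisors with equal support need not be equal (e.g.\ $4S_1+2S_2$ and $2S_1+4S_2$ with $S_1\sim S_2$ symmetric primes), so closing this gap requires a further geometric input about the universal family that the proposal does not supply.
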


\section{The maps between two irregular varieties}\label{map}

Here we give a theorem comparing the Euler numbers of two varieties of m.A.d. and equipped with a generically finite surjective morphism.
Similar result has been proved by Tirabassi with a stronger assumption (\cite{Ti} Prop. 5.2.4). A weaker version also appeared in \cite{CLZ}, where it is applied to study the automorphism groups inducing trivial actions on cohomology of irregular varieties.

\begin{Theorem}\label{euln}
Let $\pi: X \rightarrow Z$ be a generically finite surjective morphism between two smooth projective varieties of m.A.d.. Then $\chi(\omega_X) \geq \chi(\omega_Z)$.

If moreover
\begin{itemize}
\item[(i)]{$\pi$ is not birational;}
\item[(ii)]{$\pi^*: \mathrm{Pic}^0 (Z) \rightarrow \mathrm{Pic}^0 (X)$ is an embedding; and}
\item[(iii)]{$\mathrm{gv}^i(\omega_X, \mathrm{a}_X)\geq 1~\mathrm{for} ~i=1,2,\cdots,\dim X-1$, where $\mathrm{a}_X:= \mathrm{alb}_Z \circ \pi: X \rightarrow Z \rightarrow \mathrm{Alb} (Z)$,}
\end{itemize}
then $\chi(X, \omega_X) > \chi(Z, \omega_Z)$.
\end{Theorem}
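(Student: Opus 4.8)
The plan is to reduce both Euler numbers to invariants of coherent sheaves on the single abelian variety $A := \mathrm{Alb}(Z)$, and to exploit a direct-sum decomposition coming from the trace of $\pi$. Write $a_Z := \mathrm{alb}_Z : Z \to A$ and $a_X := a_Z \circ \pi = \mathrm{a}_X : X \to A$. Since $\pi$ is generically finite and surjective and $Z$ is of m.A.d., $a_X$ is generically finite, so both $X$ and $Z$ are of m.A.d. with respect to their maps to $A$; by Koll\'ar vanishing (as in the proof of Corollary \ref{rmk}) $R^{>0}(a_X)_*\omega_X = R^{>0}(a_Z)_*\omega_Z = 0$, whence $\chi(\omega_X) = \chi(A, (a_X)_*\omega_X)$ and $\chi(\omega_Z) = \chi(A, (a_Z)_*\omega_Z)$, and by the Remark following Corollary \ref{rmk} both pushforwards are GV-sheaves on $A$. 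The canonical trace $\pi_*\omega_X \to \omega_Z$ splits the natural injection $\omega_Z \hookrightarrow \pi_*\omega_X$ (over $\mathbb{C}$ the composite is a nonzero multiple of the identity on the line bundle $\omega_Z$, hence an isomorphism), giving $\pi_*\omega_X \cong \omega_Z \oplus \mathcal{Q}$; pushing forward by $a_Z$ and using $(a_X)_* = (a_Z)_*\pi_*$ yields
\[
(a_X)_*\omega_X \cong (a_Z)_*\omega_Z \oplus \mathcal{G}, \qquad \mathcal{G} := (a_Z)_*\mathcal{Q}.
\]

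For the first inequality, observe that $\mathcal{G}$ is a direct summand of the GV-sheaf $(a_X)_*\omega_X$; since $V^i(\mathcal{G}) \subseteq V^i((a_X)_*\omega_X)$ for every $i$, the sheaf $\mathcal{G}$ is itself GV. Consequently $h^{>0}(\mathcal{G}\otimes\alpha) = 0$ for general $\alpha \in \hat{A}$, so $\chi(\mathcal{G}) = h^0(\mathcal{G}\otimes\alpha) \geq 0$. Additivity of $\chi$ over the decomposition then gives $\chi(\omega_X) = \chi(\omega_Z) + \chi(\mathcal{G}) \geq \chi(\omega_Z)$.

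For the strict inequality I would show that, under (i)--(iii), the summand $\mathcal{G}$ is a \emph{nonzero M-regular sheaf}. Assumption (iii), together with $V^i(\mathcal{G}) \subseteq V^i((a_X)_*\omega_X)$ and the identity $V^i(\omega_X, a_X) = V^i((a_X)_*\omega_X)$, gives $\mathrm{gv}^i(\mathcal{G}) \geq 1$ for $1 \leq i \leq d-1$, where $d = \dim X$. The delicate point --- the step I expect to be the main obstacle --- is the top index $i = d$. Here one uses (ii): since $\mathrm{alb}_Z^*$ is an isomorphism and $\pi^*$ is an embedding, the composite $a_X^*$ is injective on $\mathrm{Pic}^0$, so by Serre duality $V^d((a_X)_*\omega_X) = \{\alpha : a_X^*\alpha \cong \mathcal{O}_X\} = \{\hat{0}\}$, and likewise $V^d((a_Z)_*\omega_Z) = \{\hat{0}\}$. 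Evaluating the decomposition at $\alpha = \hat{0}$ and using $h^d(X,\omega_X) = h^d(Z,\omega_Z) = 1$ forces $h^d(A,\mathcal{G}) = 1 - 1 = 0$; combined with $V^d(\mathcal{G}) \subseteq \{\hat{0}\}$ this yields $V^d(\mathcal{G}) = \emptyset$, i.e. $\mathrm{gv}^d(\mathcal{G}) = +\infty$. Hence $\mathrm{gv}(\mathcal{G}) \geq 1 > 0$ and $\mathcal{G}$ is M-regular.

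Finally, assumption (i) guarantees $\deg\pi \geq 2$, so $\mathcal{Q}$ has generic rank $\deg\pi - 1 \geq 1$ and thus $\mathcal{G} = (a_Z)_*\mathcal{Q} \neq 0$. A nonzero M-regular sheaf is CGG by Proposition \ref{cgg}, so for general $\alpha \in \hat{A}$ the twist $\mathcal{G}\otimes\alpha$ carries a nonzero global section; as $\mathcal{G}$ is GV its higher cohomology vanishes there, so $\chi(\mathcal{G}) = h^0(\mathcal{G}\otimes\alpha) > 0$. Therefore $\chi(\omega_X) = \chi(\omega_Z) + \chi(\mathcal{G}) > \chi(\omega_Z)$, as claimed. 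The only places I anticipate needing extra care are the precise statement of the trace splitting for a merely generically finite $\pi$ (which I would cite along the lines of the splitting used in Corollary \ref{fm}) and the vanishing $V^d(\mathcal{G})=\emptyset$, where the interplay of (ii), Serre duality, and the cancellation $h^d(X,\omega_X)=h^d(Z,\omega_Z)$ is essential.
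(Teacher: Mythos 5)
Your proposal is correct and follows essentially the same route as the paper: the trace splitting $\pi_*\omega_X \cong \omega_Z \oplus \mathcal{F}$, pushforward to $\mathrm{Alb}(Z)$, the GV property of the summand for the weak inequality, and M-regularity plus CGG (Proposition \ref{cgg}) for the strict one. The only cosmetic difference is that you verify the top-index condition $V^d(\mathcal{G}) = \emptyset$ via Serre duality and the cancellation $h^d(X,\omega_X) = h^d(Z,\omega_Z) = 1$, whereas the paper phrases the same fact through the Fourier--Mukai transform, namely $R^d\Phi_{\mathcal{P}}((\mathrm{a}_X)_*\omega_X) \cong R^d\Phi_{\mathcal{P}}((\mathrm{alb}_Z)_*\omega_Z) \cong \mathbb{C}(\hat{0})$, forcing $R^d\Phi_{\mathcal{P}}((\mathrm{alb}_Z)_*\mathcal{F}) = 0$; the two formulations are equivalent by cohomology and base change.
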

\begin{proof}
By assumption we have a splitting $\pi_*\omega_X \cong \omega_Z \oplus \mathcal{F}$. Since $(\mathrm{a}_X)_*\omega_X$ is a GV-sheaf on $\mathrm{Alb} (Z)$, the direct summand $(\mathrm{alb}_Z)_*\mathcal{F}$ is also a GV-sheaf. Then
$$\chi(Z, \mathcal{F}) = \chi(\mathrm{Alb} (Z), (\mathrm{alb}_Z)_*\mathcal{F}) = h^0(\mathrm{Alb} (Z), (\mathrm{alb}_Z)_*\mathcal{F} \otimes \alpha) \geq 0 ~\mathrm{for~ general}~ \alpha \in \mathrm{Pic}^0 (Z),$$
and it follows that
$$\chi(X, \omega_X) =  \chi(Z, \omega_Z) + \chi(Z, \mathcal{F}) \geq \chi(Z, \omega_Z).$$

Now assume (i, ii, iii). Note that

(i) implies that $\mathcal{F} \neq 0$;

(ii) implies that $R^d \Phi_\mathcal{P}((\mathrm{a}_X)_*\omega_X) \cong R^d \Phi_\mathcal{P}((\mathrm{alb}_Z)_*\omega_Z) \cong \mathbb{C}(\hat{0})$ where $d = \dim X$, thus $R^d \Phi_\mathcal{P}((\mathrm{alb}_Z)_*\mathcal{F}) = 0$;

(iii) implies that $\mathrm{gv}^i((\mathrm{alb}_Z)_*\mathcal{F}) \geq 1~\mathrm{for} ~i=1,2,\cdots,\dim X-1$.

So we conclude that $(\mathrm{alb}_Z)_*\mathcal{F}$ is a non-zero M-regular sheaf. Since $(\mathrm{alb}_Z)_*\mathcal{F}$ is CGG (cf. Proposition \ref{cgg}), for general $\alpha \in \mathrm{Pic}^0 (Z)$, we have
$$\chi(\mathrm{Alb} (Z), (\mathrm{alb}_Z)_*\mathcal{F}) = h^0(\mathrm{Alb} (Z), (\mathrm{alb}_Z)_*\mathcal{F} \otimes \alpha) > 0.$$
As a consequence we get that
$$\chi(X, \omega_X) > \chi(Z, \omega_Z).$$
\end{proof}

\section{The bicanonical map}\label{bicmap}

\begin{Assumption-Notation}\label{not2}
Let $X$ be a smooth projective variety of general type and of m.A.d., with $q(X) = \dim X = d\geq 2$. Denote by $a: X \rightarrow A$ the Albanese
map, and assume $A$ is simple, which implies that $X$ is primitive. Suppose that the bicanonical map $\phi: X \dashrightarrow \mathbb{P}^{P_2(X) - 1}$ is not birational.
\end{Assumption-Notation}

\subsection{The Fourier-Mukai transform of $\omega_X$}
\begin{Lemma}\label{pre}
$R^0\Phi_{\mathcal{P}_a}(\omega_X)  \cong
\mathcal{O}_{\hat{A}}(-\hat{D})^{\oplus \chi(\omega_X)}$ where
$\hat{D}$ is an ample divisor on $\hat{A}$.
\end{Lemma}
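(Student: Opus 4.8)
The plan is to work on the dual abelian variety $\hat{A}$ and reduce the statement to the Fourier–Mukai transform of the M-regular part of $a_*\omega_X$. First I would record that, since $X$ has $q(X)=\dim X$ and is of m.A.d., the Albanese map $a$ is generically finite and surjective onto $A$, and $a^{*}\colon\mathrm{Pic}^{0}(A)\to\mathrm{Pic}^{0}(X)$ is an isomorphism; moreover the simplicity of $A$ forces every positive-dimensional component of $V^{i}(\omega_X,a)$ ($i>0$) to be a subtorus translate (Theorem \ref{gv}(ii)), which is impossible, so $X$ is primitive and each $V^{i}(\omega_X,a)$, $i>0$, is a finite set of torsion points. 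As in the proof of Corollary \ref{fm}, the embedding $a^{*}$ yields a splitting $a_*\omega_X\cong\omega_A\oplus\mathcal{F}\cong\mathcal{O}_A\oplus\mathcal{F}$, with $\mathcal{F}$ M-regular, $R^{d}\Phi_{\mathcal{P}}\mathcal{F}=0$, and $\mathrm{rank}_{\hat{A}}\,R^{0}\Phi_{\mathcal{P}}\mathcal{F}=\chi(\mathcal{F})=\chi(\omega_X)$. Since $R^{i}\Phi_{\mathcal{P}}\mathcal{O}_A=0$ for $i<d$, we obtain $R^{0}\Phi_{\mathcal{P}_a}(\omega_X)=R^{0}\Phi_{\mathcal{P}}(a_*\omega_X)=R^{0}\Phi_{\mathcal{P}}\mathcal{F}=:\mathcal{G}$, a coherent sheaf of rank $\chi(\omega_X)$ on $\hat{A}$.

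The first substantive step is to prove that $\mathcal{F}$ is an $IT^{0}$-sheaf, equivalently (Proposition \ref{sjt}(ii)) that $R^{i}\Phi_{\mathcal{P}}\mathcal{F}=0$ for $0<i<d$. By Corollary \ref{rmk}(iii) these are $(-1)^{*}\mathcal{E}xt^{i}(\widehat{\mathcal{O}_X},\mathcal{O}_{\hat{A}})$ and are supported on the finite sets $V^{i}(\omega_X,a)$. At an isolated point $\alpha\in V^{i}$ the tangent cone is trivial, so Theorem \ref{gv}(iv) makes the derived complex exact for every direction $v\in H^{1}(\mathcal{O}_X)$; I would leverage this (run over all $v$) to show that the would-be stalk of $R^{i}\Phi_{\mathcal{P}}\mathcal{F}$ at $\alpha$ vanishes, whence $V^{i}(\mathcal{F})=\emptyset$ for $0<i<d$. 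Then $\mathcal{G}=R\Phi_{\mathcal{P}}\mathcal{F}$ is a vector bundle of rank $\chi(\omega_X)$ (Mukai; Corollary \ref{Muc}), one has $R\Psi_{\mathcal{P}}\mathcal{G}\cong(-1)^{*}\mathcal{F}[-d]$, and $\mathcal{G}^{\vee}$ is again $IT^{0}$. Because $\mathcal{F}$ is M-regular it is CGG (Proposition \ref{cgg}), and I would use this to deduce that $\mathcal{G}^{\vee}$ is globally generated, so that $\mathcal{G}$ is a negative bundle; this is what will eventually force the divisor $\hat{D}$ below to be ample.

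The heart of the argument is to upgrade ``$\mathcal{G}$ is a vector bundle'' to ``$\mathcal{G}\cong\mathcal{O}_{\hat{A}}(-\hat{D})^{\oplus\chi(\omega_X)}$.'' Here I would exploit the simplicity of $\hat{A}$: the aim is to show that $\mathcal{G}$ is semihomogeneous, so that its isotypic type is governed by a single line bundle, and then that the bundle on the $A$-side $(-1)^{*}R^{d}\Psi_{\mathcal{P}}\mathcal{G}\cong\mathcal{F}$ is a direct sum of copies of one $IT^{0}$ semihomogeneous bundle $\mathcal{W}=R^{d}\Psi_{\mathcal{P}}\mathcal{O}_{\hat{A}}(-\hat{D})$. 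Transforming back via $R^{0}\Phi_{\mathcal{P}}$ and Corollary \ref{Muc} then produces the asserted splitting, with $\hat{D}$ ample: on the simple $\hat{A}$ the line-bundle slope of $\mathcal{G}$ must be nondegenerate (so $\hat{D}$ is ample, consistent with the negativity from the previous step), and the extensions among the isotypic pieces should vanish, giving an honest direct sum rather than a nonsplit semihomogeneous bundle.

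The main obstacle is precisely this last splitting: showing that $\mathcal{G}$ is not merely a semihomogeneous bundle of rank $\chi(\omega_X)$ but literally a direct sum of $\chi(\omega_X)$ copies of a single ample-dual line bundle. The vanishing of the middle transforms (the $IT^{0}$ step) is delicate but controlled by primitivity together with Theorem \ref{gv}(iv); the uniform-line-bundle decomposition is where the simplicity of $A$ enters in an essential way, and where I expect to spend most of the effort, most plausibly by combining Mukai's theory of semihomogeneous bundles on $\hat{A}$ with the positivity supplied by the M-regularity of $\mathcal{F}$.
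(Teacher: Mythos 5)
Your setup (splitting $a_*\omega_X\cong\mathcal{O}_A\oplus\mathcal{F}$ with $\mathcal{F}$ M-regular of Euler characteristic $\chi(\omega_X)$, and identifying $R^0\Phi_{\mathcal{P}_a}\omega_X$ with $R^0\Phi_{\mathcal{P}}\mathcal{F}$) agrees with the paper's Corollary \ref{fm}, but both of your substantive steps break down. For the $IT^0$ step: exactness of the derived complex in Theorem \ref{gv}(iv) at the middle term says $\ker(\cup v)=\mathrm{im}(\cup v)$ there; it does \emph{not} force that middle term to vanish. Concretely, $\hat 0$ is an isolated point of $V^i(\omega_X,a)$ for $i>0$ (primitivity), so the tangent cone there is trivial and the complex is exact for all $v\neq 0$, and yet $h^i(X,\omega_X)\geq\binom{d}{i}>0$; the same phenomenon occurs at any nontrivial torsion point of $V^i(\omega_X,a)$, and such points remain in $V^i(\mathcal{F})$ after splitting off $\omega_A$. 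Primitivity makes these loci finite, not empty, so $\mathcal{F}$ need not be $IT^0$, and no argument of this shape can show it is. The paper, tellingly, never claims this: it keeps only M-regularity and replaces $\mathcal{F}$ by the genuinely $IT^0$ bundle $E=(-1)_A^*R^d\Psi_{\mathcal{P}}R^0\Phi_{\mathcal{P}}(a_*\omega_X)$, which merely \emph{surjects} onto $\mathcal{F}$ (sequence (\ref{4})).

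The second and larger gap is the ``heart'' of your plan: there is no mechanism behind the claim that simplicity of $\hat A$ makes $\mathcal{G}=R^0\Phi_{\mathcal{P}_a}\omega_X$ semihomogeneous, let alone a direct sum of copies of one line bundle, and no such bundle-theoretic statement is true. By Corollary \ref{Muc}, for any two ample divisors $\hat D_1\not\equiv\hat D_2$ on $\hat A$ there are $IT^0$ (hence M-regular, CGG) bundles $W_i$ on $A$ with $R^0\Phi_{\mathcal{P}}W_i\cong\mathcal{O}_{\hat A}(-\hat D_i)$, so $W_1\oplus W_2$ is an M-regular sheaf on a simple abelian variety whose transform is a rank-two bundle not of the form $\mathcal{O}_{\hat A}(-\hat D)^{\oplus 2}$. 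The special shape asserted in the lemma therefore cannot come from classification of bundles; in the paper it comes from the geometry of the paracanonical systems. One shows via \cite{BLNP} Theorem 4.13 that the fixed parts $F_\alpha$ of $|\omega_X\otimes\alpha|$ sweep out a universal divisor $\bar{\mathcal{Y}}\subset X\times\hat A$, uses simplicity of $\hat A$ through \cite{BLNP} Lemma 5.1(a) to prove the movable part $|M|$ is \emph{independent} of $\alpha$, and then computes by see-saw that $\mathcal{P}_a\cong\mathcal{O}_{X\times\hat A}(\bar{\mathcal{Y}})\otimes p_1^*(\omega_X^{-1}\otimes M)\otimes p_2^*\mathcal{O}_{\hat A}(-\hat D)$ with $\hat D=\bar{\mathcal{Y}}_p$. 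Applying $R^d(p_2)_*$ to the resulting short exact sequence gives $0\to\tau\to(-1)_{\hat A}^*\widehat{\mathcal{O}_X}\to\mathcal{O}_{\hat A}(\hat D)^{\oplus\chi(\omega_X)}\to\tau'\to 0$ with $\tau,\tau'$ torsion, where the third term is literally $H^d(X,\omega_X\otimes M^{-1})\otimes\mathcal{O}_{\hat A}(\hat D)$ --- the $\chi(\omega_X)$ copies of a \emph{single} line bundle arise from the fixed projective space $|M|$ --- and dualizing with Corollary \ref{rmk}(iii) finishes the proof. Without this analysis of the continuous system, your outline cannot be completed.
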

\begin{proof}
Let $U_0 = \hat{A} \setminus V^1(\omega_X, a)$ and $\mathcal{B}_a(x) = \{\alpha \in U_0| x~ \mathrm{is ~a ~ base~ point~ of }~ |\omega_X \otimes \alpha|\}$. Applying \cite{BLNP} Theorem 4.13 gives that $\mathrm{codim}_{\hat{A}}\mathcal{B}_a(x) = 1$ for general $x \in X$. Denote by $\bar{\mathcal{Y}}$ the divisorial part of the closure of $\mathcal{B}:= \{(x, \alpha) \in X \times U_0| \alpha \in B_a(x)\}$ in $X\times \hat{A}$. We conclude that for $\alpha \in U_0$, $|\omega_X \otimes \alpha| = |M_\alpha| + F_\alpha$, where $|M_\alpha|$ is the movable part and $F_\alpha = \bar{\mathcal{Y}}_\alpha$ is the fixed part. As in  \cite{BLNP} Sec. 5.1, we define a map $f: \hat{A} \rightarrow \hat{A}$. Since $\hat{A}$ is simple, we conclude that $f = \mathrm{id}_{\hat{A}}$ by \cite{BLNP} Lemma 5.1 (a). As a consequence $|M_\alpha|$ is independent of $\alpha$., i.e.,
$$|\omega_X \otimes \alpha| = |M| + F_\alpha$$
By \cite{BLNP} Lemma 5.2, we have
$$\mathcal{P}_a \cong \mathcal{O}_{X \times \hat{A}}(\bar{\mathcal{Y}}) \otimes p_1^*(\omega_X^{-1} \otimes M) \otimes p_2^*\mathcal{O}_{\hat{A}}(-\hat{D})$$
where $\hat{D}$ is a fiber $\bar{\mathcal{Y}}_p$ for some $p \in X$.
Then there is an exact sequence
$$0 \rightarrow \mathcal{P}_a^{-1} \rightarrow p_1^*(\omega_X \otimes M^{-1})\otimes p_2^*\mathcal{O}_{\hat{A}}(\hat{D}) \rightarrow p_1^*(\omega_X \otimes M^{-1})\otimes p_2^*\mathcal{O}_{\hat{A}}(\hat{D})|_{\bar{\mathcal{Y}}} \rightarrow 0$$
Applying $R^d(p_2)_*$ to the sequence above, we obtain the following exact sequence
\begin{equation}\label{3}
0 \rightarrow \tau \rightarrow (-1)_{\hat{A}}^*\widehat{\mathcal{O}_X} \rightarrow \mathcal{O}_{\hat{A}}(\hat{D})^{\oplus \chi(\omega_X)} \rightarrow \tau' \rightarrow 0
\end{equation}
where
\begin{enumerate}
\item[(a)]{The rank $\chi(\omega_X)$ in the third term appears because $h^d(\omega_X \otimes M^{-1}) = h^0(M) = \chi(\omega_X)$.}
\item[(b)]{$\tau'$ is supported at the locus of the $\alpha \in \hat{A}$ such that the fiber $\bar{\mathcal{Y}}_\alpha$ of the projection $p_2: \bar{\mathcal{Y}} \rightarrow \hat{A}$ has dimension $d$. Such locus is contained in $V^1(\omega_X,a)$, hence consists of finitely many torsion points.}
\item[(c)]{Since $(-1)_{\hat{A}}^*\widehat{\mathcal{O}_X}$ (cf. Corollary \ref{rmk} (ii)) also has rank $\chi(\omega_X)$, the kernel of the map $(-1)_{\hat{A}}^*\widehat{\mathcal{O}_X} \rightarrow \mathcal{O}_{\hat{A}}(\hat{D})^{\oplus \chi(\omega_X)}$ is the torsion part $\tau \cong \mathbb{C}(\hat{0})$ of $(-1)_{\hat{A}}^*\widehat{\mathcal{O}_X}$ (cf. \cite{BLNP} Prop. 6.1). }
\item[(d)]{Note that
$R\Psi_{\mathcal{P}}(\tau'), R\Psi_{\mathcal{P}}(\tau)~\mathrm{and}~
R\Psi_{\mathcal{P}}((-1)_{\hat{A}}^*\widehat{\mathcal{O}_X}) \cong a_*\mathcal{O}_X$ (by Theorem \ref{Mu}) are all sheaves on
$A$. Applying $R\Psi_{\mathcal{P}}$ to Sq. \ref{3}, then by using spectral sequence we conclude that $R\Psi_{\mathcal{P}}(\mathcal{O}_{\hat{A}}(\hat{D})^{\oplus
\chi(\omega_X)})$ is also a sheaf, hence $\hat{D}$ is an ample
divisor on $\hat{A}$.}
\end{enumerate}
Note that
$$\mathcal{E}xt^i(\mathcal{O}_{\hat{A}}(\hat{D})^{\oplus \chi(\omega_X)}, \mathcal{O}_{\hat{A}}) = 0 ~\mathrm{if}~i\neq 0~\mathrm{and}~ \mathcal{E}xt^i(\tau, \mathcal{O}_{\hat{A}}) = \mathcal{E}xt^i(\tau', \mathcal{O}_{\hat{A}}) = 0~\mathrm{if}~i\neq d.$$
Recall that  $R^i\Phi_{\mathcal{P}_a}(\omega_X) \cong (-1)_{\hat{A}}^*\mathcal{E}xt^i(\widehat{\mathcal{O}_X}, \mathcal{O}_{\hat{A}})$ (cf. Corollary \ref{rmk} (iii)). Applying $\mathcal{E}xt(-, \mathcal{O}_{\hat{A}})$ to (\ref{3}), by using spectral sequence, we conclude that
$$R^0\Phi_{\mathcal{P}_a}(\omega_X) \cong (-1)_{\hat{A}}^*\mathcal{E}xt^0(\widehat{\mathcal{O}_X}, \mathcal{O}_{\hat{A}}) \cong \mathcal{O}_{\hat{A}}(-\hat{D})^{\oplus \chi(\omega_X)}$$
\end{proof}

\subsection{The universal divisor}\label{ud}
Since $\hat{D}$ is an ample divisor on $\hat{A}$, $\mathcal{O}_{\hat{A}}(\hat{D})^{\oplus \chi(\omega_X)}$ is an $IT^0$-vector bundle, using Corollary \ref{Muc}, we have that the sheaf
$$E := (-1)_A^*R^d\Psi_{\mathcal{P}}R^{0}\Phi_{\mathcal{P}_a}(\omega_X) \cong (-1)_A^*R^d\Psi_{\mathcal{P}}(\mathcal{O}_{\hat{A}}(-\hat{D})^{\oplus \chi(\omega_X)}) \cong (R\Psi_{\mathcal{P}}(\mathcal{O}_{\hat{A}}(\hat{D})^{\oplus \chi(\omega_X)}))^*$$
is an $IT^0$-vector bundle such that $R^0\Phi_{\mathcal{P}}(E) = R^{0}\Phi_{\mathcal{P}_a}(\omega_X)$. By Corollary \ref{fm}, $E$ fits into the following exact sequence
\begin{equation}\label{4}
E \rightarrow a_*\omega_X \rightarrow \omega_A \rightarrow 0.
\end{equation}

Let
$$\hat{P} =
\mathbb{P}_{\hat{A}}(R^{0}\Phi_{\mathcal{P}_a}(\omega_X))=\mathbb{P}_{\hat{A}}(R^0\Phi_{\mathcal{P}}(E)) \cong \hat{A} \times \mathbb{P}^{\dim |M|}~\mathrm{and}~P = \mathbb{P}_A(E^*).$$
Then $\hat{P}$ is one component of the Hilbert scheme
parametrizing the divisors in $|K_X \otimes \alpha|, \alpha \in \hat{A}$. We denote by $\mathcal{K}
\subset X \times \hat{P}$
the universal family (cf. Theorem \ref{pf}). Let the notation $\tilde{\mathcal{P}}$, $\mathcal{U}$, $\pi$ and $\hat{\pi}$ be as in Sec. \ref{rzh}.
By the proof of Lemma \ref{pre}, for $\hat{p} = (\alpha, M) \in U_0 \times \mathbb{P}^{\dim |M|} \subset \hat{P}$, where $M\in |M|$, we have
$$\mathcal{K}_{\hat{p}} = \bar{\mathcal{Y}}_\alpha + M.$$
So $(\mathrm{id}_X \times \hat{\pi})^*\bar{\mathcal{Y}} \subset \mathcal{K}$, and we can write that
\begin{equation}
\mathcal{K} = \mathcal{H} + (\mathrm{id}_X \times \hat{\pi})^*\bar{\mathcal{Y}}.
\end{equation}

\begin{Fact}\label{facts}
\begin{itemize}
\item[(a)]
If $\mathcal{H}$ is non-empty (i.e., $\chi(X, \omega_X) > 1$), then it is dominant over $\hat{P}$, and for $\hat{p} = (\alpha, M) \in \hat{P}$, $\mathcal{H}_{\hat{p}} = M$;
\item[(b)]
for general $x \in X$, $\mathcal{H}_x = \hat{A} \times H_x \subset \hat{A} \times \mathbb{P}^{\dim |M|}$ where $H_x$ is the hyperplane in $\mathbb{P}^{\dim |M|}$ parametrizing all the divisors in $|M|$ passing through $x$;
\item[(c)]
for the divisor $\bar{\mathcal{Y}}$, denoting by $\mathcal{V}$ the sum of all the components dominant over $X$, then $\bar{\mathcal{Y}} = \mathcal{V} + p_1^*F$ where $F \subset X$ is the common fixed part of all $F_\alpha, \alpha \in U_0$;
\item[(d)]
for a general point $x \in X \setminus F$, $\mathcal{Y}_x = \mathcal{V}_x\equiv \mathcal{O}_{\hat{A}}(\hat{D}) \otimes \mathcal{P}_{a(x)}$.
\end{itemize}
\end{Fact}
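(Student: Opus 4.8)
The plan is to read all four items off the structural description of $\bar{\mathcal{Y}}$ produced in Lemma \ref{pre} together with the line-bundle identity recorded there, treating the Fact as successive bookkeeping on how the universal family $\mathcal{K}$ and its subfamily $(\mathrm{id}_X \times \hat{\pi})^*\bar{\mathcal{Y}}$ slice over points of $\hat{P}$ and of $X$. No new geometric input is needed beyond the decomposition $\mathcal{K} = \mathcal{H} + (\mathrm{id}_X \times \hat{\pi})^*\bar{\mathcal{Y}}$, the fiberwise relation $\mathcal{K}_{\hat{p}} = \bar{\mathcal{Y}}_\alpha + M$ for $\hat{p} = (\alpha, M)$, and the formula for $\mathcal{P}_a$ from Lemma \ref{pre}.

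For (a) I would start from the relation $\mathcal{K} = \mathcal{H} + (\mathrm{id}_X \times \hat{\pi})^*\bar{\mathcal{Y}}$ and note that $((\mathrm{id}_X \times \hat{\pi})^*\bar{\mathcal{Y}})_{\hat{p}} = \bar{\mathcal{Y}}_{\hat{\pi}(\hat{p})} = \bar{\mathcal{Y}}_\alpha$; subtracting this from $\mathcal{K}_{\hat{p}} = \bar{\mathcal{Y}}_\alpha + M$ gives $\mathcal{H}_{\hat{p}} = M$. As $\hat{p}$ ranges over $\hat{P}$ the divisor $M$ sweeps out the full $\mathbb{P}^{\dim|M|}$-factor, so $\mathcal{H}$ dominates $\hat{P}$; and $\mathcal{H} = \emptyset$ is equivalent to $\dim|M| = 0$, which by the identity $h^0(M) = \chi(\omega_X)$ established in Lemma \ref{pre}(a) is exactly the dichotomy $\chi(\omega_X) > 1 \Leftrightarrow \mathcal{H} \neq \emptyset$. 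Item (b) is then immediate: for general $x$ the fiber $\mathcal{H}_x = \{\hat{p} \in \hat{P} : x \in \mathcal{H}_{\hat{p}}\}$ equals $\{(\alpha, M) : x \in M\}$ by (a), a condition independent of $\alpha$, so $\mathcal{H}_x = \hat{A} \times H_x$ with $H_x = \{M \in \mathbb{P}^{\dim|M|} : x \in M\}$; since general $x$ is not a base point of $|M|$, the evaluation functional on $H^0(M)$ is nonzero and $H_x$ is a genuine hyperplane.

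Item (c) is the place I expect to do the real work, and it is a dimension count on the components of $\bar{\mathcal{Y}} \subset X \times \hat{A}$, each of which has dimension $2d-1$. If $p_1|_W$ is not dominant for a component $W$, then $W \subseteq p_1(W) \times \hat{A}$ forces $\dim p_1(W) \geq d-1$; as $p_1(W)$ is proper this means $p_1(W)$ is a divisor $D \subset X$ and, by equality of dimensions, $W = D \times \hat{A}$. Collecting these vertical components yields $p_1^*F$ with $F = \sum D_i$, and the remaining components, whose sum I call $\mathcal{V}$, are precisely those dominating $X$. A component of $\mathcal{V}$ cannot have $\alpha$-independent fiber, for otherwise it would again be of the form $D \times \hat{A}$ and fail to dominate $X$; hence $\mathcal{V}$ contributes nothing constant to the slices $\bar{\mathcal{Y}}_\alpha = F_\alpha$, and $F = \bigcap_\alpha F_\alpha$ is exactly the common fixed part, giving $\bar{\mathcal{Y}} = \mathcal{V} + p_1^*F$.

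Finally (d) follows by restricting the identity
$$\mathcal{P}_a \cong \mathcal{O}_{X\times\hat{A}}(\bar{\mathcal{Y}}) \otimes p_1^*(\omega_X^{-1}\otimes M) \otimes p_2^*\mathcal{O}_{\hat{A}}(-\hat{D})$$
of Lemma \ref{pre} to a general slice $\{x\} \times \hat{A}$: the pulled-back term $p_1^*(\omega_X^{-1}\otimes M)$ restricts trivially, $\mathcal{P}_a$ restricts to $\mathcal{P}_{a(x)}$, and $\mathcal{O}(\bar{\mathcal{Y}})$ restricts to $\mathcal{O}_{\hat{A}}(\bar{\mathcal{Y}}_x)$, so solving gives $\mathcal{O}_{\hat{A}}(\bar{\mathcal{Y}}_x) \equiv \mathcal{O}_{\hat{A}}(\hat{D}) \otimes \mathcal{P}_{a(x)}$. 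For general $x \in X \setminus F$ the vertical part $p_1^*F$ misses the slice, so $\bar{\mathcal{Y}}_x = \mathcal{V}_x$ and the claimed equivalence follows. The only subtlety to keep track of throughout is that these fiberwise formulas are asserted for \emph{general} $x$, i.e. away from the finitely many bad loci — the base locus of $|M|$, the support of $F$, and the torsion points carrying $V^1(\omega_X, a)$ — so I would fix the general position of $x$ once at the outset and use it uniformly in (b) and (d).
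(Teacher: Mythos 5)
Your proposal is correct and takes essentially the paper's own route: the paper states Fact \ref{facts} without separate proof precisely because items (a)--(d) are meant to be read off the construction in Lemma \ref{pre} and Section \ref{ud}, and your arguments --- extracting (a) and (b) from the decomposition $\mathcal{K} = \mathcal{H} + (\mathrm{id}_X \times \hat{\pi})^*\bar{\mathcal{Y}}$ and the fiber formula $\mathcal{K}_{\hat{p}} = \bar{\mathcal{Y}}_\alpha + M$, proving (c) by the dimension count on components of the divisor $\bar{\mathcal{Y}} \subset X \times \hat{A}$, and obtaining (d) by restricting the see-saw identity for $\mathcal{P}_a$ to a slice $\{x\} \times \hat{A}$ --- are exactly the bookkeeping the paper leaves implicit. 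The only looseness is minor: in (a) the fiber identity is first established only over $U_0 \times \mathbb{P}^{\dim|M|}$ and extends to all of $\hat{P}$ because every component of $\mathcal{K}$ (hence of $\mathcal{H}$) dominates $\hat{P}$ by flatness of the universal family, and in (c) the claim should be phrased as ``the slices $\mathcal{V}_\alpha$ share no common component,'' which your closure argument does in fact establish.
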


Let $\tilde{\mathcal{P}}_a = (a \times \hat{\pi})^*\mathcal{P}$. By Theorem \ref{pf}, we have
$$\mathcal{K} \equiv p_1^*\omega_X \otimes \tilde{\mathcal{P}}_a \otimes p_2^*\mathcal{O}_{\hat{P}}(1)~~~\mathrm{and}~~~  \mathcal{U} \equiv p_1^* \mathcal{O}_P(1) \otimes \tilde{\mathcal{P}}\otimes p_2^*\mathcal{O}_{\hat{P}}(1).$$
Observe that for a general $x \in X$, the fiber $\mathcal{K}_x$ is a divisor on $\hat{P}$ linearly equivalent to $\mathcal{O}_{\hat{P}}(1) \otimes \hat{\pi}^*\mathcal{P}_{a(x)}$, hence is a fiber of $\mathcal{U} \rightarrow P$. We can define a rational map relative over $A$
$$h: X \dashrightarrow P ~\mathrm{via}~x \mapsto \mathcal{K}_x,$$
Assume that $h$ is a morphism by blowing up $X$. There exists an open set $U \subset X$ such that the restriction $\mathcal{K}|_{U \times \hat{P}} = (h \times \mathrm{id}_{\hat{P}})^* \mathcal{U}|_{U \times \hat{P}}$. Since $\mathcal{U}$ is flat over $P$, $(h \times \mathrm{id}_{\hat{P}})^* \mathcal{U}$ is the closure of $(h \times \mathrm{id}_{\hat{P}})^* \mathcal{U}|_{U \times \hat{P}}$ in $X \times \hat{P}$, thus $(h \times \mathrm{id}_{\hat{P}})^* \mathcal{U} \subset \mathcal{K}$.
\begin{Fact}\label{fact}
\begin{itemize}
\item[(1)] Using the see-saw principle, $\mathcal{K} = (h \times id_{\hat{P}})^*\mathcal{U} \otimes p_1^*\mathcal{O}_X(G)$ where $G$ is an effective divisor on $X$ such that
$h^*\mathcal{O}_P(1) + G \equiv \omega_X$.
\item[(2)] We have a natural homomorphism $\otimes s: h^*\mathcal{O}_P(1) \rightarrow \omega_X$ where $s \in H^0(X, \mathcal{O}_X(G))$ is a section with zero locus $G$, then pushing forward gives a homomorphism $a_*h^*\mathcal{O}_P(1) \rightarrow a_*\omega_X$.
\item[(3)]
The relative map $h: X/A \rightarrow P/A$ is induced by the homomorphism $E =\pi_*\mathcal{O}_P(1)  \rightarrow
a_*h^*\mathcal{O}_P(1)$.
\item[(4)] The composite homomorphism $E \rightarrow a_*h^*\mathcal{O}_P(1) \rightarrow a_*\omega_X$ coincides with the natural homomorphism $E \rightarrow a_*\omega_X$ in (\ref{4}) up to multiplication by a non-zero constant.
\end{itemize}
\end{Fact}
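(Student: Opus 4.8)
The plan is to treat parts (1)--(3) as formal consequences of the universal-divisor formalism and to concentrate the real work on (4), which identifies the two a priori different maps $E \to a_*\omega_X$.

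For (1) I would compare the effective divisors $\mathcal{K}$ and $(h \times \mathrm{id}_{\hat{P}})^*\mathcal{U}$ by the see-saw principle. Using the formulas $\mathcal{K} \equiv p_1^*\omega_X \otimes \tilde{\mathcal{P}}_a \otimes p_2^*\mathcal{O}_{\hat{P}}(1)$ and $\mathcal{U} \equiv p_1^*\mathcal{O}_P(1) \otimes \tilde{\mathcal{P}} \otimes p_2^*\mathcal{O}_{\hat{P}}(1)$, together with the relations $\pi \circ h = a$ (so that $(h \times \mathrm{id}_{\hat{P}})^*\tilde{\mathcal{P}} = \tilde{\mathcal{P}}_a$) and $(h \times \mathrm{id}_{\hat{P}})^*p_1^*\mathcal{O}_P(1) = p_1^* h^*\mathcal{O}_P(1)$, the difference $\mathcal{K} - (h \times \mathrm{id}_{\hat{P}})^*\mathcal{U}$ is linearly equivalent to $p_1^*(\omega_X \otimes h^*\mathcal{O}_P(1)^{-1})$. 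Since the two divisors already agree over the open set $U \subset X$ on which $h$ classifies the fibres, this difference is effective and supported over $X \setminus U$; a dimension count on its irreducible components then forces it to be a genuine pullback $p_1^*G$ with $G$ effective, whence $h^*\mathcal{O}_P(1) + G \equiv \omega_X$. Part (2) is then immediate: the effective divisor $G$ has a tautological section $s \in H^0(X, \mathcal{O}_X(G))$ with zero locus $G$, and $\otimes s$ composed with the isomorphism $h^*\mathcal{O}_P(1) \otimes \mathcal{O}_X(G) \cong \omega_X$ from (1) gives $h^*\mathcal{O}_P(1) \to \omega_X$, which I push forward by $a_*$. Part (3) is the universal property of $P = \mathbb{P}_A(E^*)$: a morphism $h$ over $A$ together with the line bundle $h^*\mathcal{O}_P(1)$ corresponds to a surjection $a^*E \twoheadrightarrow h^*\mathcal{O}_P(1)$, which by $(a^*, a_*)$-adjunction is the same as the map $E = \pi_*\mathcal{O}_P(1) \to a_*h^*\mathcal{O}_P(1)$ inducing $h$.

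The heart of the matter is (4), which I first reduce to a statement about Fourier--Mukai transforms. Because $E$ is $IT^0$ we have $R\Phi_{\mathcal{P}}E = R^0\Phi_{\mathcal{P}}E =: \mathcal{G} = \mathcal{O}_{\hat{A}}(-\hat{D})^{\oplus \chi(\omega_X)}$, while $R\Phi_{\mathcal{P}}(a_*\omega_X)$ is concentrated in non-negative degrees with $\mathcal{H}^0 = R^0\Phi_{\mathcal{P}_a}(\omega_X) = \mathcal{G}$ and higher cohomology sheaves supported on the finite loci $V^i(\omega_X, a)$ (as $X$ is primitive). A truncation argument then shows that $R^0\Phi_{\mathcal{P}}$ induces an isomorphism $\mathrm{Hom}(E, a_*\omega_X) \xrightarrow{\sim} \mathrm{Hom}(\mathcal{G}, \mathcal{G})$, so a map $E \to a_*\omega_X$ is determined by its transform. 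The natural map of (\ref{4}) transforms to the identity of $\mathcal{G}$ by construction, being the natural surjection furnished by Proposition \ref{sjt}(i) under the identification $R^0\Phi_{\mathcal{P}}E = R^0\Phi_{\mathcal{P}_a}\omega_X$. It therefore suffices to prove that the transform of the composite $E \to a_*h^*\mathcal{O}_P(1) \to a_*\omega_X$ is a non-zero scalar multiple of $\mathrm{id}_{\mathcal{G}}$. Since $\mathrm{End}(\mathcal{O}_{\hat{A}}(-\hat{D})) = \mathbb{C}$, this transform is a constant $\chi(\omega_X) \times \chi(\omega_X)$ matrix, and I would compute it on the fibre over one general $\alpha \in \hat{A}$: there the composite sends a section $\sigma \in H^0(E \otimes \alpha) = H^0(P, \mathcal{O}_P(1) \otimes \pi^*\alpha)$, cutting out a divisor $D_\sigma$ (equivalently a point $\hat{p} \in \hat{P}$), to $s \cdot h^*\sigma \in H^0(\omega_X \otimes \alpha)$, whose divisor is $h^{-1}(D_\sigma) + G$. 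By the decomposition $\mathcal{K} = (h \times \mathrm{id}_{\hat{P}})^*\mathcal{U} \otimes p_1^*\mathcal{O}_X(G)$ of (1), this is precisely the universal canonical divisor $\mathcal{K}_{\hat{p}}$ indexed by the same point $\hat{p}$; hence on this fibre the composite agrees with the canonical identification $H^0(E \otimes \alpha) = H^0(\omega_X \otimes \alpha)$ up to a scalar independent of $\alpha$, the matrix is $c\,\mathrm{id}_{\mathcal{G}}$ with $c \neq 0$, and (4) follows.

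The main obstacle is exactly this last step: showing the transform of the composite is a multiple of the \emph{identity} rather than an arbitrary element of $\mathrm{Hom}(\mathcal{G}, \mathcal{G})$, which is a genuine constraint when $\chi(\omega_X) > 1$. The resolution rests on the fact that the single universal family $\mathcal{K}$ simultaneously defines $\hat{P}$, the bundle $E$, the natural map, \emph{and} the morphism $h$; the delicate bookkeeping is to verify that the identification of $h^{-1}(D_\sigma) + G$ with the classified canonical divisor is the canonical fibrewise identification for all general $\sigma$ at once, so that no off-diagonal entries appear. A lesser but real subtlety, already in (1), is upgrading ``linearly trivial on the general fibre of $p_1$'' to ``a true pullback $p_1^*G$''; this is handled by the effectivity of the difference together with the support-and-dimension argument on its components.
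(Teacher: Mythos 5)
Your proposal is correct and follows essentially the same route as the paper: the paper likewise reduces (4) to a comparison of Fourier--Mukai transforms (it invokes the CGG property of $E$ where you use Mukai's equivalence plus a truncation argument) and then identifies the transform of the composite with that of the natural map $E \rightarrow a_*\omega_X$ through the relation $\mathcal{K} = (h \times \mathrm{id}_{\hat{P}})^*\mathcal{U} \otimes p_1^*\mathcal{O}_X(G)$ --- your fibrewise computation over general $\alpha \in \hat{A}$ is exactly the paper's sheaf-level map $\lambda'$ made pointwise. The paper states (1)--(3) without proof, so your arguments for those items simply supply details the paper treats as immediate.
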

We explain (4). Since $E$ is CGG, the composite homomorphism $E \rightarrow a_*h^*\mathcal{O}_P(1) \rightarrow a_*\omega_X$ is determined by its Fourier-Mukai transform $\lambda: R^0\Phi_{\mathcal{P}}E \rightarrow R^0\Phi_{\mathcal{P}}(a_*h^*\mathcal{O}_P(1)) \rightarrow R^0\Phi_{\mathcal{P}}(a_*\omega_X)$. By abuse of notation, we also use $\mathcal{U}$ and $\mathcal{K}$ for the line bundles on $P \times \hat{P}$ and $X \times \hat{P}$ linearly equivalent to $\mathcal{U}$ and $\mathcal{K}$ respectively. Then with the corresponding terms being isomorphic, we have that $\lambda$ coincides with the following natural composite homomorphism
$$\lambda': \hat{\pi}_*(p_2)_*(\mathcal{U} \otimes p_2^*\mathcal{O}_{\hat{P}}(-1)) \rightarrow \hat{\pi}_*(p_2)_*((h \times \mathrm{id}_{\hat{P}})^*\mathcal{U} \otimes p_2^*\mathcal{O}_{\hat{P}}(-1)) \rightarrow \hat{\pi}_*(p_2)_*(\mathcal{K} \otimes p_2^*\mathcal{O}_{\hat{P}}(-1)).$$
We can see that $\lambda'$ coincides with the Fourier-Mukai transform of $E \rightarrow a_*\omega_X$ in (\ref{4}) up to multiplication by a non-zero constant, then (4) follows.

\begin{Lemma}\label{embd}
If $\deg(a) > 2$, then $h : X \rightarrow P$ is an embedding generically, which means that for two general distinct points $x, y \in X$, $\mathcal{K}_x \neq \mathcal{K}_y$.
\end{Lemma}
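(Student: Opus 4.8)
The plan is to prove that $h$ is injective on a general fibre of $a$; since $h$ is a morphism over $A$ (so $\pi\circ h=a$) and is generically finite onto its image, with $h^{-1}(h(x))\subset a^{-1}(a(x))$, this injectivity on a general fibre is exactly the assertion that $h$ is generically injective, which is the meaning of ``embedding generically'' here. Two general points of $X$ landing in different fibres of $a$ are automatically separated, because $\mathcal{K}_x\equiv\mathcal{O}_{\hat{P}}(1)\otimes\hat{\pi}^*\mathcal{P}_{a(x)}$ has class depending on $a(x)$; so the whole content is to separate two distinct points lying over one general $t\in A$.

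Fix a general $t\in A$ and write $a^{-1}(t)=\{x_1,\dots,x_n\}$ with $n=\deg(a)\ge 3$. First I describe $h$ on this fibre. By Fact \ref{fact}(3) the relative map $h\colon X/A\to P/A=\mathbb{P}_A(E^*)/A$ is induced by the adjoint surjection $a^*E\to h^*\mathcal{O}_P(1)$, so in the identification $P_t=\mathbb{P}(E_t^*)$ the point $h(x_i)$ is the class of the functional $E_t\to (h^*\mathcal{O}_P(1))_{x_i}\cong\mathbb{C}$. By Fact \ref{fact}(4), after the isomorphism $(h^*\mathcal{O}_P(1))_{x_i}\cong(\omega_X)_{x_i}$ given by $\otimes s$ (an isomorphism since $x_i\notin G$ for general $x_i$), this functional is, up to a nonzero scalar, the $i$-th component of the fibre $\psi_t\colon E_t\to (a_*\omega_X)_t=\bigoplus_{k}(\omega_X)_{x_k}=\mathbb{C}^n$ of the map $E\to a_*\omega_X$ of (\ref{4}). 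Hence $h(x_i)=[\,\mathrm{pr}_i\circ\psi_t\,]$.

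Next I identify the image of $\psi_t$. By the exactness of (\ref{4}), $\mathrm{im}(\psi_t)=\ker(\rho_t)$, where $\rho_t\colon\mathbb{C}^n\to(\omega_A)_t$ is the fibre of the projection $\rho\colon a_*\omega_X\to\omega_A$. Over the \'etale locus one has $a_*\omega_X\cong\omega_A\otimes a_*\mathcal{O}_X$, and $\rho$ corresponds to a monodromy-invariant functional on the rank-$n$ bundle $a_*\mathcal{O}_X$, whose fibre is $\mathbb{C}^n$ with the monodromy of the covering permuting the basis $\{x_k\}$. Because $X$ is irreducible, the covering $X^\circ\to A^\circ$ over the \'etale locus is connected, so this monodromy is transitive, and the only invariant functionals are the scalar multiples of the trace $(v_k)\mapsto\sum_k v_k$; thus $\rho_t=c\,(1,\dots,1)$ with $c\neq 0$, and $\ker\rho_t=\{\sum_k v_k=0\}=:W$.

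Finally the linear-algebra step. Since $\psi_t$ is surjective onto $W$, we get $h(x_i)=h(x_j)$ if and only if $\mathrm{pr}_i|_W$ and $\mathrm{pr}_j|_W$ are proportional, i.e. $e_i^*-\lambda e_j^*\in W^\perp=\langle(1,\dots,1)\rangle$ for some $\lambda$; comparing coordinates this forces the $k$-th coordinate of $(1,\dots,1)$ to vanish for every $k\neq i,j$. Since all coordinates equal $1$, this is impossible as soon as $n\ge 3$ (there is an index $k\neq i,j$), so $h(x_i)\neq h(x_j)$ for $i\neq j$. Hence $h$ is injective on the general fibre and so generically injective, which is the claim. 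I expect the main obstacle to be the bookkeeping of the second and third paragraphs: pinning down $h(x_i)$ as the $i$-th component of the fibre of $E\to a_*\omega_X$ via Fact \ref{fact} and the isomorphism $\otimes s$, and then recognizing that the cokernel map $a_*\omega_X\to\omega_A$ is forced (by transitive monodromy) to be the trace. It is here, and only here, that the hypothesis $\deg(a)>2$ enters, for when $n=2$ the trace is supported on the whole two-element fibre and $h$ necessarily identifies the two points.
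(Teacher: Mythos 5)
Your argument is sound in outline and takes a genuinely different route from the paper's, which is a three-line degree count from the same inputs. From Fact \ref{fact} (3), (4) and the exactness of (\ref{4}), the paper observes that the composite $E \to a_*h^*\mathcal{O}_P(1) \to a_*\omega_X$ has image $\mathcal{F}=\ker(a_*\omega_X \to \omega_A)$, of rank $\deg(a)-1$; since proportional functionals span at most one dimension, the number of distinct points of $h(X)$ over a general $t\in A$ is at least $\mathrm{rank}\,(E \to a_*h^*\mathcal{O}_P(1)) \ge \deg(a)-1$, whence $\deg(h)\le\deg(a)/(\deg(a)-1)<2$ as soon as $\deg(a)\ge 3$. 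Note that this uses only the \emph{rank} of the image of $\psi_t$, never its position inside $(a_*\omega_X)_t\cong\mathbb{C}^n$. Your proof instead pins down $\mathrm{im}(\psi_t)$ exactly as the kernel $W$ of the trace and then separates all $n$ points of a general fibre by hand. This is more explicit, it proves fibrewise injectivity directly rather than deducing it a posteriori from $\deg(h)=1$, and it isolates precisely why $\deg(a)=2$ is the exceptional case ($\mathrm{pr}_1=-\mathrm{pr}_2$ on the antidiagonal); the price is that you must know \emph{which} functional cuts out $\mathcal{F}_t$, which the paper never needs.

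That identification is also where your proof has a real gap. You assert that $\rho$ ``corresponds to a monodromy-invariant functional'' on $a_*\mathcal{O}_X$ over the \'etale locus and then invoke transitivity of the monodromy. But an $\mathcal{O}$-linear map $a_*\mathcal{O}_{X^\circ}\to\mathcal{O}_{A^\circ}$ is not automatically flat: via the trace pairing such maps correspond to arbitrary holomorphic functions $g$ on $X^\circ$ (acting on fibres by $(f(x_k))_k\mapsto\sum_k g(x_k)f(x_k)$), and only locally constant $g$ give monodromy-invariant functionals. So the invariance of $\rho$ is exactly what has to be proved, and as written the step is circular. The conclusion is nevertheless true and standard, and there are two clean patches: (a) the splitting $a_*\omega_X\cong\omega_A\oplus\mathcal{F}$ of \cite{CV} Prop. 1.2, from which (\ref{4}) is built, has the relative trace map as its projection by construction; or (b) by Koll\'ar's theorem $Ra_*\omega_X\cong a_*\omega_X$ together with Grothendieck duality (here $\dim X=\dim A$, so $a^{!}\omega_A\cong\omega_X$), one has $\mathrm{Hom}_A(a_*\omega_X,\omega_A)\cong\mathrm{Hom}_X(\omega_X,\omega_X)=\mathbb{C}$, so \emph{every} nonzero map $a_*\omega_X\to\omega_A$ is a nonzero multiple of the trace. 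With either patch inserted, your linear-algebra argument goes through and the lemma follows.
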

\begin{proof}
By Fact \ref{fact} (3, 4), the degree of the restriction map $\pi|_{h(X)}: h(X) \rightarrow A$ is $\geq \mathrm{rank}(E \rightarrow a_*h^*\mathcal{O}_P(1)) \geq \deg(a)-1$. Then we have $\deg(h) \leq
\frac{\deg(a)}{\deg(a) -1}$, and the assertion
follows easily by assumption.
\end{proof}

\subsection{The Euler number $\chi(X, \omega_X)$}

\begin{Proposition}\label{ir}
$\mathcal{V}$ is irreducible.
\end{Proposition}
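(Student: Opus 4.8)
The plan is to argue by contradiction: suppose $\mathcal{V} = \sum_{i=1}^r \mathcal{V}^i$ with $r \geq 2$ distinct irreducible components, each dominant over $X$ (this is automatic from Fact \ref{facts}(c)). For general $x$ the fibre decomposes as $\mathcal{V}_x = \sum_i \mathcal{V}^i_x$, where each $\mathcal{V}^i_x$ is a nonzero effective divisor on $\hat{A}$. First I would observe that the image of the class of $\mathcal{V}^i_x$ in $\mathrm{NS}(\hat{A})$ is locally constant, hence constant on the irreducible variety $X$; write it as a fixed class $\hat{D}_i$ with $\sum_i \hat{D}_i = \hat{D}$. Flatness of $\mathcal{V}^i$ over a dense open subset then yields a morphism $\phi_i : X \to \mathrm{Pic}^{0}(\hat{A}) = A$ recording the $\mathrm{Pic}^0$-part of the class of $\mathcal{V}^i_x$, and Fact \ref{facts}(d) forces $\sum_i \phi_i = a$ up to a constant translation.

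The key structural input I would exploit next is the universal property of the Albanese map: since $A = \mathrm{Alb}(X)$ and $\mathrm{alb}_X = a$, every $\phi_i$ factors as $\phi_i = \psi_i \circ a$ for some $\psi_i \in \mathrm{End}(A)$ up to translation, so the relation $\sum_i \phi_i = a$ becomes $\sum_i \psi_i = \mathrm{id}_A$ in $\mathrm{End}(A)$. Because $A$ is simple, each $\psi_i$ is either $0$ or an isogeny. I would first rule out $\psi_i = 0$: such a component gives a nonzero effective $\mathcal{V}^i_x$ whose class is independent of $x$, so that $x \mapsto \mathcal{V}^i_x$ is a nonconstant family inside the fixed linear system $|\hat{D}_i \otimes \mathcal{P}_{c_i}|$ that does not vary with $a(x)$; tracing this back through $h$ and the identification $\mathcal{K} = \mathcal{H} + (\mathrm{id}_X \times \hat{\pi})^*\bar{\mathcal{Y}}$ should produce a subsystem of $|2K_X|$ factoring rationally through $a$, contradicting Proposition \ref{cnm}. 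Hence every $\psi_i$ is a nonzero isogeny, and in particular the class of $\mathcal{V}^i_x$ depends on $x$ only through $a(x)$; I would then upgrade this to the statement that $\mathcal{V}^i$ itself is pulled back from a divisor $W_i$ on $A \times \hat{A}$, again invoking Proposition \ref{cnm} to kill any residual linear-system freedom not coming from $a$.

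Having reduced to $\mathcal{V} = (a \times \mathrm{id}_{\hat{A}})^* W$ with $W = \sum_i W_i \subset A \times \hat{A}$ and each fibre $W_x \in |\hat{D} \otimes \mathcal{P}_x|$, I would finish by a positivity argument on the simple abelian variety $\hat{A}$: the ample class $\hat{D}$ is written as $\sum_i \hat{D}_i$, the translation law of each $\mathcal{V}^i_x \in |\hat{D}_i \otimes \mathcal{P}_{\psi_i(a(x))}|$ is governed by $\sum_i \psi_i = \mathrm{id}_A$, and the compatibility of these data with the $IT^0$-structure of $E$ and with the single-block form $\mathcal{O}_{\hat{A}}(-\hat{D})^{\oplus \chi(\omega_X)}$ of $R^0\Phi_{\mathcal{P}_a}(\omega_X)$ in Lemma \ref{pre} should force $r = 1$.

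I expect the main obstacle to be the second step: excluding the fixed-class components and, more seriously, promoting the statement \emph{the class of $\mathcal{V}^i_x$ depends only on $a(x)$} to \emph{the divisor $\mathcal{V}^i$ descends through $a \times \mathrm{id}_{\hat{A}}$}. Controlling this linear-system freedom is exactly where the non-birationality of $\phi$ and Proposition \ref{cnm} have to be combined delicately; by contrast, the concluding positivity count is comparatively routine once the descent is established.
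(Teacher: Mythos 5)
Your opening moves are correct, and they are in fact ``dual'' to what the paper does: the constancy of the N\'eron--Severi classes $\hat{D}_i$, the morphisms $\phi_i\colon X\to A$, the factorization $\phi_i=\psi_i\circ a$ through the Albanese map, and the relation $\sum_i\psi_i=\mathrm{id}_A$ with each $\psi_i$ zero or an isogeny (by simplicity) are all fine. The fatal problem is the step you call ``comparatively routine'': no positivity or compatibility count on $\hat{A}$ can force $r=1$, because the data you have assembled admits reducible solutions even on a simple abelian variety. Concretely, let $\hat{\Theta}$ be a principal polarization on $\hat{A}$, let $\lambda_{\hat{\Theta}}\colon\hat{A}\to A=\mathrm{Pic}^0(\hat{A})$ be the induced isomorphism, and take $\psi_1=2\,\mathrm{id}_A$, $\psi_2=-\mathrm{id}_A$. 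Then $W^i:=\{(x,\alpha)\in A\times\hat{A}\mid \alpha\in t^*_{\lambda_{\hat{\Theta}}^{-1}\psi_i(x)}\hat{\Theta}\}$ has fibres $W^i_x\in|\hat{\Theta}\otimes\mathcal{P}_{\psi_i(x)}|$, so $W=W^1+W^2$ is a reducible family with constant classes $\hat{D}_i=\hat{\Theta}$, with $W_x\in|2\hat{\Theta}\otimes\mathcal{P}_x|$, with $\psi_1+\psi_2=\mathrm{id}_A$, and with exactly the see-saw/Fourier--Mukai shape that $\mathcal{V}$ has (Lemma \ref{pre} and Fact \ref{facts} hold irrespective of irreducibility, as Example \ref{eg} already shows). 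So nothing at the level of classes, endomorphisms and ampleness on $\hat{A}$ distinguishes this from an irreducible family; the contradiction must use geometry of $X$ that your argument never touches. The decisive property --- the entire engine of the paper's proof --- is that the fibres of $\mathcal{V}$ in the \emph{other} direction, over $\alpha\in\hat{A}$, are contained in the fixed part $F_\alpha$ of $|\omega_X\otimes\alpha|$, and a fixed part cannot move inside its linear equivalence class. A secondary but real gap is your use of Proposition \ref{cnm} to exclude $\psi_i=0$: producing \emph{a subsystem} of $|2K_X|$ that factors rationally through $a$ contradicts nothing, since Proposition \ref{cnm} only forbids the full pluricanonical map from factoring; such subsystems genuinely exist (e.g.\ $a^*|B|\subset|2K_X|$ in Example \ref{eg} and in the conjectural double-cover picture). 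Together with the descent step that you yourself flag as unproven, this leaves all three later stages of your plan open.

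For comparison, the paper's proof is short and works entirely with the fibres over $\hat{A}$. Writing $\mathcal{V}=\mathcal{V}^1+\mathcal{V}^2$, one defines $\iota_i\colon\hat{A}\to\hat{A}$, $\alpha\mapsto[\mathcal{V}^i_\alpha-\mathcal{V}^i_{\alpha_0}]\in\mathrm{Pic}^0(X)=\hat{A}$ (these are precisely the duals of your $\psi_i$). If some $\iota_i$ were constant, then for general $\alpha$ one has $\mathcal{V}^i_\alpha\equiv\mathcal{V}^i_{\alpha_0}$ but $\mathcal{V}^i_\alpha\neq\mathcal{V}^i_{\alpha_0}$, so $\mathcal{V}^i_\alpha+\mathcal{V}^j_{\alpha_0}$ ($j\neq i$) is a member of $|\mathcal{V}_{\alpha_0}|$ distinct from $\mathcal{V}_{\alpha_0}$, contradicting that $\mathcal{V}_{\alpha_0}$ sits inside the fixed part of $|\omega_X\otimes\alpha_0|$. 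Hence both $\iota_i$ are non-constant, therefore surjective because $\hat{A}$ is simple; choosing $\alpha_1,\alpha_2$ with $\iota_1(\alpha_1)=\alpha$ and $\iota_2(\alpha_2)=-\alpha$ gives $\mathcal{V}^1_{\alpha_1}+\mathcal{V}^2_{\alpha_2}\equiv\mathcal{V}_{\alpha_0}$, again moving the fixed part --- contradiction. Note that this argument needs neither the descent of $\mathcal{V}$ to $A\times\hat{A}$ nor Proposition \ref{cnm}; simplicity enters only through the surjectivity of the non-constant maps $\iota_i$. If you want to salvage your framework, the repair is to transport the fixed-part property into it: after (or instead of) descent, run the above variation argument on the $\hat{A}$-fibres; the positivity count should be abandoned, as it cannot be made to work.
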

\begin{proof}
We argue by contradiction. Suppose that $\mathcal{V} = \mathcal{V}^1 + \mathcal{V}^2$ is reducible. Note that both $\mathcal{V}^1$ and $\mathcal{V}^2$ are dominant over $X$ and $\hat{A}$. Fixing a general $\alpha_0$, we define two maps $\iota_i: \hat{A} \rightarrow \hat{A}$ via $\alpha \mapsto \mathcal{V}^i_\alpha - \mathcal{V}^i_{\alpha_0}$ with $\mathcal{V}^i_\alpha - \mathcal{V}^i_{\alpha_0}$ identified as an element in $\hat{A} = \mathrm{Pic}^0(X)$, which extend to two morphisms. We claim that neither of the two maps are constant. Indeed, say, if $\iota_1$ is constant, then for a general $\alpha \in \hat{A}$ $\mathcal{V}^1_\alpha \equiv \mathcal{V}^1_{\alpha_0}$ and $\mathcal{V}^1_\alpha \neq \mathcal{V}^1_{\alpha_0}$, so $\mathcal{V}^1_\alpha + \mathcal{V}^2_{\alpha_0} \equiv \mathcal{V}^1_{\alpha_0} + \mathcal{V}^2_{\alpha_0} \in |\mathcal{V}_{\alpha_0}|$, contradicting that $\mathcal{V}_{\alpha_0}$ is contained in the fixed part of $|\omega_X \otimes \alpha_0|$. Therefore, both $\iota_1$ and $\iota_2$ are surjective since $\hat{A}$ is simple. For general $\alpha \in \hat{A}$, there exist $\alpha_1, \alpha_2 \in \hat{A}$ such that $\iota_1(\alpha_1) = \alpha, \iota_2(\alpha_2) = \alpha^{-1}$. Then we conclude that
$$\mathcal{V}_{\alpha_0} = \mathcal{V}^1_{\alpha_0} + \mathcal{V}^2_{\alpha_0} \equiv \mathcal{V}^1_{\alpha_1} + \mathcal{V}^2_{\alpha_2}$$
which contradicts that $\mathcal{V}_{\alpha_0}$ is contained in the fixed part of $|\omega_X \otimes \alpha_0|$ again.
\end{proof}

\begin{Theorem}\label{eun}
The Euler number $\chi(\omega_X) = 1$.
\end{Theorem}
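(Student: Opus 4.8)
The plan is to prove that $\chi(\omega_X)=1$ by contradiction, leveraging the structure of the universal divisor established in Lemma \ref{pre} and the irreducibility of $\mathcal{V}$ from Proposition \ref{ir}. Suppose $\chi(\omega_X)>1$. Then the movable part $|M|$ is nontrivial (of positive dimension), so the horizontal component $\mathcal{H}$ in the decomposition $\mathcal{K}=\mathcal{H}+(\mathrm{id}_X\times\hat{\pi})^*\bar{\mathcal{Y}}$ is nonempty by Fact \ref{facts}(a). The central idea is to exploit the non-birationality of the bicanonical map $\phi$: since $\phi$ is not birational, the linear system $|2K_X|$ fails to separate some two general points $x,y\in X$, and I would translate this failure into the combinatorial condition of Lemma \ref{spr}, namely $\mathcal{H}_x=\mathcal{H}_y$ and $\mathrm{Supp}(V_x+(-1)^*_{\hat{A}}V_x)=\mathrm{Supp}(V_y+(-1)^*_{\hat{A}}V_y)$ for the fibers of the universal divisor $\mathcal{U}$ on $P=\mathbb{P}_A(E^*)$, pulled back along the map $h:X\dashrightarrow P$ of Section \ref{ud}.

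The key steps, in order, are as follows. First I would analyze the two possibilities for non-birationality of $\phi$ separately according to whether $\deg(a)=2$ or $\deg(a)>2$. When $\deg(a)>2$, Lemma \ref{embd} guarantees $h$ is generically injective, so the failure of $|2K_X|=|2h^*\mathcal{O}_P(1)+2G|$ to separate general points must come through $|\mathcal{O}_P(2)|$ failing to separate the corresponding points $h(x),h(y)$; I would then invoke Lemma \ref{spr} directly. When $\deg(a)=2$, the Albanese map itself identifies pairs of points, and I would argue that the two sheets over a general point of $A$ must be separated by $|2K_X|$ unless the geometry forces $\chi(\omega_X)=1$; this is where the conclusion about $|2K_X|$ separating two points over the same general point of $A$ (the second assertion of Theorem \ref{main}) naturally interlocks. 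Second, using Fact \ref{facts}(d), for general $x\in X\setminus F$ the vertical fiber is $\mathcal{Y}_x=\mathcal{V}_x\equiv\mathcal{O}_{\hat{A}}(\hat{D})\otimes\mathcal{P}_{a(x)}$, a single irreducible divisor by Proposition \ref{ir}. I would feed the separation condition $\mathrm{Supp}(V_x+(-1)^*V_x)=\mathrm{Supp}(V_y+(-1)^*V_y)$ into this description: since $\mathcal{V}_x$ depends on $x$ only through $a(x)$ and the Poincaré twist $\mathcal{P}_{a(x)}$, the equality of supports would force $a(x)$ and $a(y)$ to be related by a rigid constraint (essentially $a(x)=\pm a(y)$ up to the finitely many torsion ambiguities), severely restricting the pairs that can be identified.

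Third, and this is where I expect the main obstacle, I would show that the horizontal condition $\mathcal{H}_x=\mathcal{H}_y$ combined with the restricted vertical condition is incompatible with $\chi(\omega_X)>1$. By Fact \ref{facts}(b), $\mathcal{H}_x=\hat{A}\times H_x$ where $H_x\subset\mathbb{P}^{\dim|M|}$ is the hyperplane of divisors in $|M|$ through $x$; thus $\mathcal{H}_x=\mathcal{H}_y$ means $H_x=H_y$, i.e. $|M|$ itself fails to separate $x$ and $y$. If $\dim|M|>0$, the movable system $|M|$ would then have to fail to separate an entire positive-dimensional family of point-pairs, which I would argue contradicts either the bigness of $\omega_X$ (since $X$ is of general type and of maximal Albanese dimension) or the non-degeneracy forced by Proposition \ref{cnm}, which prevents the canonical map from factoring through $a$. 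The delicate point is ruling out the degenerate configurations where the identified pairs all lie over a proper subvariety, so that $|M|$ could conceivably contract them; here I would use the simplicity of $A$ together with Proposition \ref{ir} to argue that the vertical part already separates generic fibers of $a$, leaving no room for a nontrivial movable part, and hence forcing $\mathcal{H}=\emptyset$, i.e. $\chi(\omega_X)=1$.

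Finally, once $\chi(\omega_X)=1$ is reached, I would confirm consistency: with $\chi(\omega_X)=1$ the bundle $E$ has the minimal rank compatible with the exact sequence (\ref{4}), $\hat{P}\cong\hat{A}$, and the only remaining mechanism for non-birationality of $\phi$ is precisely that $|2K_X|$ fails to separate two distinct points lying over the same general point of $A$—which is the statement proved in the companion Theorem \ref{spr2}. The hardest part of the argument is the third step: converting the two abstract separation conditions of Lemma \ref{spr} into a genuine geometric contradiction requires careful bookkeeping of how $\mathcal{V}_x$ varies with $a(x)$ and a sharp use of the simplicity of $A$ to exclude the subtorus-supported degenerations that the generic vanishing theory (Theorem \ref{gv}(ii)) would otherwise permit.
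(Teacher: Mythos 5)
Your proposal has a genuine gap at its core, and it is precisely the step you yourself flag as ``the main obstacle'': nothing in your argument actually derives a contradiction from $\chi(\omega_X)\geq 2$. The separation analysis you set up (non-separated general points $x,y$ give $\mathcal{H}_x=\mathcal{H}_y$ plus the symmetrized vertical condition of Lemma \ref{spr}) is fine as far as it goes, but by itself this configuration is \emph{not} incompatible with $\chi(\omega_X)\geq 2$: Example \ref{eg} exhibits varieties (over non-simple $A$) with $\chi(\omega_X)>1$, non-birational bicanonical map, and exactly this structure of non-separated pairs. So any contradiction must use the simplicity of $A$ through a concrete mechanism, and none of your candidates supplies one. ``Bigness of $\omega_X$'' is no obstruction to the movable part $|M|$ contracting pairs of points; Proposition \ref{cnm} only forbids the (pluri)canonical map from factoring through $a$, which does not conflict with $\mathcal{H}_x=\mathcal{H}_y$ --- indeed the structure expected even when $\chi(\omega_X)=1$ (cf. Section \ref{sre}, Remark (1)) is that $\phi$ factors through an involution lying over $A\to A/((-1)_A)$, and $|M|$ is then constant on those pairs; and the assertion that simplicity makes ``the vertical part separate generic fibers of $a$'' is unsupported: for $a(x)=a(y)$ one always has $\mathcal{V}_x\equiv\mathcal{V}_y$, and you give no argument that they differ as divisors. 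Two further flaws: you cite Proposition \ref{ir} to claim $\mathcal{V}_x$ is irreducible, but that proposition concerns the total space $\mathcal{V}$ only --- irreducibility of the fibers $\mathcal{V}_x$ is explicitly left open in the paper (Section \ref{sre}, Remark (1)), which is why Theorem \ref{deg} only obtains the bound $2^r$; and letting Theorem \ref{spr2} ``interlock'' with this proof would be circular, since its proof rests on $\chi(\omega_X)=1$ via Theorem \ref{euln}.

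The paper's actual proof takes an entirely different route after the set-up. From $\chi(\omega_X)\geq 2$ it extracts a pencil inside $|M|$, Stein-factorizes the resulting map to get a fibration $g:X\to Y$ whose base is a rational curve (simplicity of $A$ and $\dim A\geq 2$ force $q(Y)=0$), and takes a general fiber $M'$. It then proves in three steps that $a_{M'}\colon M'\to A_{M'}$ satisfies the hypotheses of Corollary \ref{ref} --- your separation analysis appears only inside Step 1 there, in a proof by contradiction whose sole purpose is to show that the auxiliary systems $|\omega_X(M')\otimes\alpha|$ have a base divisor --- so $M'$ is birational to a theta divisor. But then $q(X)=q(M')+q(\mathbb{P}^1)$, and Theorem \ref{cp} forces $X$ to be birational to $M'\times\mathbb{P}^1$, contradicting that $X$ is of general type. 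This detour through the theta-divisor characterization (Corollary \ref{ref}) and the product-structure theorem (Theorem \ref{cp}) is exactly the machinery your direct approach lacks, and without a substitute for it the proposal cannot be completed.
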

\begin{proof}
We argue by contradiction. Suppose that $\chi(\omega_X) \geq 2$. So for general $\alpha \in \mathrm{Pic}^0 (X)$, the movable part $|M|$ of $|\omega_X \otimes \alpha|$ is non-trivial. By taking two different general elements $M_1, M_2 \in |M|$, we define a rational map $f: X \dashrightarrow \mathbb{P}^1$, and assume that $f$ is a morphism by blowing up $X$. Let $f = \pi \circ g: X \rightarrow Y \rightarrow \mathbb{P}^{1}$ be the Stein factorization.

Since $A$ is simple and $\dim A \geq 2$, we conclude that $Y$ is a rational curve. Take a general fiber $M'$ of $g: X \rightarrow Y$. Then $M'$ is smooth, $M \equiv kM'$ for some integer $k>0$, and the restriction map of the bicanonical map $\phi|_{M'}$ is not birational.

We claim that $M'$ is not birational to a theta divisor on a p.p.a.v.. Indeed, otherwise we have $q(M') = \dim M' +1 = \dim X$, thus  $q(X) = q(M') + q(Y)$. Then $X$ is birational to $M' \times \mathbb{P}^1$ by Theorem \ref{cp}, contradicting that $X$ is of general type.

It is reduced to prove that
\begin{Claim}
$M'$ is birational to a theta divisor on a p.p.a.v..
\end{Claim}
\emph{Proof of the claim:} We break the proof into 3 steps.

\underline{Step 1}: Consider the line bundle $\omega_X(M')$. For general $x\in X$, we define the locus $B'_x := \{\alpha \in \hat{A}|x~ \mathrm{is ~a ~base ~ point~of}~|\omega_X(M')\otimes \alpha|\}$. Then $\mathrm{codim}_{\hat{A}}B'_x = 1$.

Assume to the contrary that $\mathrm{codim}_{\hat{A}}B'_x > 1$. Then take two general distinct points $x,y \in X$ such that $|2K_X|$ fails to separate them. We can see that every $M \in |M|$ containing $x$ must contain $y$, thus $\mathcal{H}_x = \mathcal{H}_y$ by Fact \ref{facts} (b).

We claim that $\mathcal{V}_x = \mathcal{V}_y$, as a consequence $\mathcal{K}_x = \mathcal{K}_y$ and $a(x) = a(y)$ by Fact \ref{facts} (d).
Indeed, if not, we can choose $\alpha \in \hat{A}$ contained in $\mathcal{V}_x$ while not in $\mathcal{V}_y$ such that $-\alpha$ is not contained in $B'_y$ since $\mathrm{codim}_{\hat{A}}(B'_y) > 1$. It follows that $\mathcal{V}_\alpha$ contains $x$ but not $y$, and there exists $D \in |\omega_X(M')\otimes \alpha^{-1}|$ which does not contain $y$. Taking a divisor $M' \in |M'|$ not containing $y$, then the divisor $\mathcal{V}_\alpha + D + (k-1)M' + F \in |2K_X|$ contains $x$ but not $y$ (where $F$ is introduced in Fact \ref{facts} (c)), a contradiction.

Then we obtain a contradiction by Proposition \ref{cnm} if $\deg (a) = 2$, and by Lemma \ref{embd} if $\deg (a) > 2$.

\underline{Step 2}: $|\omega_X(M')\otimes \alpha| = |H| + F'_{\alpha}$, where the movable part $H$ is independent of general $\alpha\in \hat{A}$.

Since $\mathrm{codim}_{\hat{A}}B'_x = 1$ for general $x \in X$, similarly as in the proof of Lemma \ref{pre} we get a divisor $\bar{\mathcal{Y}}'$ dominant over $\hat{A}$, such that $|\omega_X(M')\otimes \alpha| = |H_{\alpha}| + F'_{\alpha}$ for general $\alpha \in \hat{A}$, where $|H_{\alpha}|$ is the movable part and $F'_{\alpha} = \bar{\mathcal{Y}}'_\alpha$ is the fixed part. Since $|M'|$ is base point free, we have $F'_{\alpha} \leq F_\alpha$, i.e., $\bar{\mathcal{Y}}'_\alpha \leq \bar{\mathcal{Y}}_\alpha$. There exists a non-empty open set $U \subset \hat{A}$ such that, the restriction $\bar{\mathcal{Y}}'|_{X \times U} \leq \bar{\mathcal{Y}}|_{X \times U}$, thus $\bar{\mathcal{Y}}' \leq \bar{\mathcal{Y}}$ because they are the closure of $\bar{\mathcal{Y}}'|_{X \times U}$ and $\bar{\mathcal{Y}}|_{X \times U}$ in $X \times \hat{A}$ respectively. Denote by $\mathcal{V}'$ the sum of the components of $\bar{\mathcal{Y}}'$ dominant over $X$. We have $\mathcal{V}' \leq \mathcal{V}$, thus $\mathcal{V}' = \mathcal{V}$ by Proposition \ref{ir}. Denote by $F'$ the common fixed part of $|\omega_X(M')\otimes \alpha|$. We can see that $H_{\alpha} \equiv \omega_X(M' - \mathcal{V}_\alpha -F')\otimes \alpha$ is independent of general $\alpha\in \hat{A}$. Setting $F'_{\alpha} = \mathcal{V}_\alpha + F'$, then we are done.

\underline{Step 3}: Tensoring the following exact sequence
$$0\rightarrow \omega_X \rightarrow \omega_X(M') \rightarrow \omega_{M'} \rightarrow 0$$
with $\alpha \in U_0$ and taking cohomology, we conclude that the restriction map $H^0(X, \omega_X(M')\otimes \alpha) \rightarrow H^0(M', \omega_{M'}\otimes \alpha)$ is surjective since $H^1(X,\omega_X \otimes \alpha) = 0$. Then by $|\omega_X(M')\otimes \alpha| = |H| + F'_{\alpha}$, we have that
$$|\omega_{M'}\otimes \alpha| = |H||_{M'} +  F'_{\alpha}|_{M'}$$
By assumption that $A$ and $\hat{A}$ are simple, they have no proper subtorus of positive dimension, we conclude that $A$ is generated by the translates through the origin of $a(M')$, and $\dim V^1(\omega_{M'}, a|_{M'}) = 0$ by generic vanishing theorem.

The restriction morphism $a|_{M'}: M'\rightarrow A$ factors through a morphism to an abelian variety $a_{M'}: M'\rightarrow A_{M'}$, where $A_{M'}$ is the dual of the image of the natural map $(a|_{M'})^*: \mathrm{Pic}^0(A) \rightarrow \mathrm{Pic}^0 (M')$. So $(a_{M'})^*:\mathrm{Pic}^0(A_{M'}) \rightarrow \mathrm{Pic}^0(M')$ is an embedding. Write that $ a|_{M'} =\eta \circ a_{M'}: M'\rightarrow A_{M'} \rightarrow A$. Then $\eta$ is finite, and thus $\eta^*: \mathrm{Pic}^0(A) \rightarrow  \mathrm{Pic}^0(A_{M'})$ is an epimorphism. So $V^1(\omega_{M'}, a_{M'}) = \eta^*V^1(\omega_{M'}, a|_{M'})$ is of dimension $0$. Applying Corollary \ref{ref} to $a_{M'}: M'\rightarrow A_{M'}$ shows that $M'$ is birational to a theta divisor.
\end{proof}

\begin{Remark}
To prove $\chi(X, \omega_X) = 1$, the simplicity of $\mathrm{Alb}(X)$ is necessary by Example \ref{eg}.
\end{Remark}

\subsection{The degree of the bicanonical map}\label{sdeg}

$\chi(X, \omega_X) = 1$ implies that $R^0\Phi_{\mathcal{P}_a}(\omega_X)  \cong
\mathcal{O}_{\hat{A}}(-\hat{D})$ is a line bundle, and $\hat{P} = A$. By Fact \ref{facts}, we have that $\mathcal{K} = \bar{\mathcal{Y}}$, and for $x \in X\setminus F$, $\mathcal{K}_x = \bar{\mathcal{Y}}_x= \mathcal{V}_x$. Write that
$\mathcal{V}_x = V^1_x + ...+V^r_x$ as in Sec. \ref{rzh}.
\begin{Theorem}\label{deg}
Let the notation be as above. Then $\deg(\phi) \leq 2^r$.
\end{Theorem}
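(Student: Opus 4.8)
The plan is to exploit the separation criterion for $|\mathcal{O}_P(2)|$ given in Lemma \ref{spr} together with the generic embedding $h: X \to P$. Since $\chi(X,\omega_X)=1$ forces $\hat{P}=A$ and $\mathcal{K}=\bar{\mathcal{Y}}$, the bicanonical map $\phi$ is, up to the fixed part $F$ and the pullback structure through $h$, governed by $|\mathcal{O}_P(2)|$ on the image $h(X)$. Concretely, because $|2K_X| = |2M| + |\text{(vertical/fixed contributions)}|$ and the movable behaviour is captured by $h^*\mathcal{O}_P(1)$, two general points $x,y$ with $\mathcal{V}_x=V^1_x+\cdots+V^r_x$ and $\mathcal{V}_y=V^1_y+\cdots+V^r_y$ fail to be separated by $\phi$ precisely when $|\mathcal{O}_P(2)|$ fails to separate $h(x),h(y)$; so I would first reduce the estimate of $\deg(\phi)$ to counting, for a fixed general $x$, the number of general $y$ with $\mathcal{K}_y=\mathcal{K}_x$, i.e.\ with $\mathcal{V}_y$ and $\mathcal{V}_x$ sharing the same support up to the $(-1)^*_{\hat{A}}$-symmetrization appearing in condition (b) of Lemma \ref{spr}.

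Next I would analyze the combinatorics of the decomposition $\mathcal{V}_x = V^1_x + \cdots + V^r_x$. When $\chi(X,\omega_X)=1$ the horizontal part $\mathcal{H}$ is empty, so Lemma \ref{spr}(b) reduces to the single condition $\mathrm{Supp}(V_x + (-1)^*_{\hat{A}}V_x) = \mathrm{Supp}(V_y + (-1)^*_{\hat{A}}V_y)$. The key observation is that each irreducible component $V^i_x$ determines an element of $\hat{A}$ (via Fact \ref{facts}(d), $\mathcal{V}_x \equiv \mathcal{O}_{\hat{A}}(\hat{D}) \otimes \mathcal{P}_{a(x)}$, so $V^i_x$ varies in a fixed algebraic family as $x$ moves), and the matching of supports means $y$ is obtained from $x$ by independently choosing, for each of the $r$ components, whether $V^i_y$ matches $V^i_x$ or its $(-1)^*_{\hat{A}}$-reflection. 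This gives at most $2^r$ choices, hence at most $2^r$ points $y$ in a general fiber of $\phi$, yielding $\deg(\phi) \le 2^r$.

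The main obstacle I anticipate is making the passage from the abstract separation condition (b) on $P$ into an honest count of $2^r$ on $X$ fully rigorous, because several subtleties intervene: one must verify that the generic embedding $h$ (Lemma \ref{embd}, valid for $\deg(a)>2$, with the $\deg(a)=2$ case handled separately via Proposition \ref{cnm}) identifies $\mathcal{K}_x$ with the universal-family fiber $\mathcal{U}_{h(x)}$, so that the support-matching of $\mathcal{V}_x$ really corresponds to the support-matching of $V_{h(x)}$; one must confirm that the reflection $(-1)^*_{\hat{A}}$ acts on the set of $r$ components as an involution whose orbits under the matching condition contribute the binary factor $2^r$ and not something smaller or larger; and one must ensure that components fixed by the reflection, or coincidences among the $V^i_x$, only decrease the count. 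I would therefore structure the argument so that the bound $2^r$ is an upper bound coming from freely choosing a sign per component, and leave refinements (equality conditions, symmetric components) for subsequent analysis. The flat family structure of $\mathcal{U}$ over $P$ and the see-saw computation $\mathcal{V}_x \equiv \mathcal{O}_{\hat{A}}(\hat{D}) \otimes \mathcal{P}_{a(x)}$ are the technical inputs that keep the component-by-component bookkeeping under control.
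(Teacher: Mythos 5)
Your proposal follows essentially the same route as the paper's proof: Lemma \ref{spr} supplies the $2^r$ count of non-separated partners on $P$ (with $\mathcal{H}$ empty since $\chi(\omega_X)=1$), the generic injectivity of $h$ (Lemma \ref{embd}) transfers this count to $X$ when $\deg(a)>2$, and Proposition \ref{cnm} closes the $\deg(a)=2$ case by forcing $|2K_X|$ to separate the two points in a general fiber of $a$, which is birationally the same cover as $h$. The only small imprecision is your ``precisely when'': all that holds, and all the upper bound needs, is the one implication coming from $h^*|\mathcal{O}_P(2)| \subset |2K_X|$, exactly as in the paper.
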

\begin{proof}
By Lemma \ref{spr}, if $x, y \in P$ are two distinct points such that $|\mathcal{O}_P(2)|$ fails to separate them, then
$\mathcal{H}_x = \mathcal{H}_y$
and
$$\mathcal{V}_y =  ((-1)_{\hat{A}}^{\epsilon_1})^*V^1_x + \cdots +
((-1)_{\hat{A}}^{\epsilon_r})^*V^r_x, ~for~some~\epsilon_i \in \{0,1\}, i = 1,2,...,r$$
which has $2^r$ possibilities.

If $\deg(h) = 1$, then we are done since $h^*|\mathcal{O}_P(2)| \subset |2K_X|$.

If $\deg(h) > 1$, then $\deg(a) = 2$ by Lemma \ref{embd}, and thus $a$ and $h$ are birationally equivalent. The assertion follows by combining the two facts that the restriction of $|\mathcal{O}_P(2)|$ on $h(X)$ defines a map of degree $\leq 2^r$ and that the bicanonical map does not factor through $a$ rationally (cf. Proposition \ref{cnm}).
\end{proof}

\begin{Theorem}\label{spr2}
$|2K_X|$ separates the points over the same general point on $A$.
\end{Theorem}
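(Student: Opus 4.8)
The plan is to work inside the situation already set up in Section \ref{sdeg}, where $\chi(\omega_X)=1$ forces $R^0\Phi_{\mathcal{P}_a}(\omega_X)\cong\mathcal{O}_{\hat{A}}(-\hat{D})$, $\hat{P}=A$, and $\mathcal{K}=\bar{\mathcal{Y}}=\mathcal{V}$, so that for general $x\in X$ one has $h(x)=\mathcal{K}_x=\mathcal{V}_x=:V_x$, a divisor on $\hat{A}$ of class $\mathcal{O}_{\hat{A}}(\hat{D})\otimes\mathcal{P}_{a(x)}$ (Fact \ref{facts}(d)). To prove the statement I must separate two distinct points $x,y$ with $a(x)=a(y)=\bar{a}$ for general $\bar{a}$. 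Since $h^*|\mathcal{O}_P(2)|\subseteq|2K_X|$, I would reduce everything to showing that $|\mathcal{O}_P(2)|$ separates $h(x)$ and $h(y)$, and then split according to $\deg a$.

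When $\deg a=2$, the extension $\mathbb{C}(X)/\mathbb{C}(A)$ is Galois, so $a$ is birationally the quotient by an involution $\sigma$, and the two points over $\bar{a}$ are $x$ and $\sigma(x)$. A factorization of $\phi=\phi_{2K_X}$ through $a$ is exactly $\sigma$-invariance of $\phi$, i.e. the failure of $|2K_X|$ to separate $x$ and $\sigma(x)$; since $\chi(\omega_X)=1>0$, Proposition \ref{cnm} forbids this, and the case is done. When $\deg a>2$, Lemma \ref{embd} makes $h$ birational onto its image, hence injective on $a^{-1}(\bar{a})$ for general $\bar{a}$, so in particular $V_x\ne V_y$. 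Because $\chi(\omega_X)=1$ the horizontal part $\mathcal{H}$ is empty (Fact \ref{facts}(a)), so Lemma \ref{spr} reduces non-separation of $h(x)$ and $h(y)$ to the purely vertical condition $\mathrm{Supp}(V_x+(-1)_{\hat{A}}^*V_x)=\mathrm{Supp}(V_y+(-1)_{\hat{A}}^*V_y)$.

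Writing $V_x=V_x^1+\cdots+V_x^r$ as in Section \ref{rzh} and using $V_x\equiv V_y$, this coincidence of supports lets me express $V_y=\sum_{i\notin T}V_x^i+\sum_{i\in T}(-1)_{\hat{A}}^*V_x^i$ for some index set $T$, with $T\ne\emptyset$ because $V_y\ne V_x$. Choosing a symmetric representative of the ample class (so that $(-1)_{\hat{A}}^*$ fixes the numerical part of each $[V_x^i]$) and writing $[V_x^i]=\mathcal{O}_{\hat{A}}(\hat{D}_i)\otimes\mathcal{P}_{\gamma_i}$, the identity $[V_y]=[V_x]$ collapses to $\mathcal{P}_{2\sum_{i\in T}\gamma_i}\cong\mathcal{O}_{\hat{A}}$, that is, $\sum_{i\in T}\gamma_i$ is a $2$-torsion point of $A=\mathrm{Pic}^0(\hat{A})$. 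If $T=\{1,\dots,r\}$ (in particular whenever $r=1$) then $\sum_{i\in T}\gamma_i=a(x)=\bar{a}$ and the relation reads $2\bar{a}=0$, which is impossible for general $\bar{a}$ on a simple $A$ of dimension $\ge 2$; this is the clean heart of the argument, and it is the geometric reason why the non-separated pairs of $\phi$ must lie over $\bar{a}$ and $-\bar{a}$ rather than over a single fibre of $a$.

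The main obstacle is the genuinely reducible case, where $T$ is a proper nonempty subset and $\sum_{i\in T}\gamma_i\in A[2]$ does not immediately contradict generality. Here I would invoke the irreducibility of $\mathcal{V}$ (Proposition \ref{ir}): the $r$ reduced components $V_x^i$ form a single transitive monodromy orbit as $x$ varies, so the partial-sum $2$-torsion relation propagates over the whole orbit. Summing $\sum_{i\in T}\gamma_i$ over the transitive monodromy action then exhibits a positive multiple of $\sum_i\gamma_i=a(x)$ as a locally constant, hence constant, function valued in the finite set of $2$-torsion points, which would force $a$ to be constant and contradict that $a$ is generically finite onto $A$. Making this monodromy-and-simplicity step precise — in particular controlling the dependence of $T$ on the pair $(x,y)$ and the fact that only the symmetric functions of the $\gamma_i$ are globally single-valued — is the point I expect to require the most care; the remainder is the class bookkeeping carried out above.
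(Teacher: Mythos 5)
Your route is genuinely different from the paper's. The paper's own proof is short and global, and never touches the universal divisor: it forms the diagonal map $(a\times\phi)\colon X\to Z\subseteq A\times\mathbb{P}^{P_2(X)-1}$ and notes that if separation over a general point of $A$ failed, $X\to Z$ would be generically finite of degree $\geq 2$; since $a$ factors through $Z$, the pullback $\mathrm{Pic}^0(Z)\to\mathrm{Pic}^0(X)$ is an embedding, and simplicity of $A$ gives hypothesis (iii) of Theorem \ref{euln}, so $\chi(\omega_Z)<\chi(\omega_X)=1$, i.e.\ $\chi(\omega_Z)=0$; by \cite{BLNP} Prop.\ 4.10, $Z$ is then birational to $A$, so $\phi$ factors rationally through $a$, contradicting Proposition \ref{cnm}. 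Thus the paper converts Theorem \ref{eun} into the separation statement purely through the Euler-number comparison of Section \ref{map}, with no case division on $\deg a$ and no analysis of the components $V^i_x$. Your proof instead pushes the machinery of Theorem \ref{deg} one step further. Your $\deg a=2$ case ($\sigma$-invariance plus Proposition \ref{cnm}) is correct, and your case $T=\{1,\dots,r\}$ (in particular $r=1$) is correct as well, modulo the harmless point that it is $h^*|\mathcal{O}_P(2)|+2G$, not $h^*|\mathcal{O}_P(2)|$, that sits in $|2K_X|$, so the pair must be chosen off $G$; the normalization constants from the symmetric representatives do cancel, so the relation is exactly $\sum_{i\in T}\gamma_i(x)\in A[2]$, the $2$-torsion of $A$. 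What your route buys, once completed, is the finer geometric statement anticipated in Section \ref{sre}: non-separated pairs of $\phi$ can only lie over $\bar a$ and $-\bar a$. What the paper's route buys is brevity and complete avoidance of the reducible case.

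The step you flagged is a genuine hole as written, but it does close; here is what must be supplied. If the theorem fails, the non-separation locus contains an irreducible $W\subseteq (X\times_A X)\setminus\Delta$ dominating $A$; both projections $W\to X$ are then dominant (their images have dimension $\geq\dim A$), and this is what legitimizes applying Lemma \ref{embd} and Facts \ref{facts} to pairs in $W$ --- note that Lemma \ref{embd} by itself speaks only of pairs general in $X\times X$, and pairs in $W$ are not such. Next let $X'\to X$ be (the normalization of) the Galois closure of the cover parametrizing the components of $\mathcal{V}_x$; Proposition \ref{ir} makes this cover, hence $X'$, irreducible, its deck group $G$ acts transitively on the $r$ labels, and each $\gamma_i$ is an honest morphism $X'\to A$. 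Pull $W$ back to $X'$, take an irreducible component $W''$ whose image in $X'$ is dense, and pigeonhole over the finitely many pairs $(T,\tau)$ with $\emptyset\neq T\subseteq\{1,\dots,r\}$ and $\tau\in A[2]$: since each such condition is constructible, one relation $\sum_{i\in T_0}\gamma_i=\tau$ holds on a dense subset of $W''$, hence --- both sides being morphisms to $A$ --- identically on $X'$. Applying every $\sigma\in G$ and summing, transitivity gives
\begin{equation*}
\frac{|G|\,|T_0|}{r}\sum_{i=1}^r\gamma_i \;=\; |G|\,\tau ,
\end{equation*}
and since $\sum_i\gamma_i$ differs from $a$ by a constant, a positive multiple of $a$ is constant on $X$, contradicting the surjectivity of $a$ (multiplication by a positive integer is surjective on $A$). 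This is precisely the averaging you proposed; with these details filled in, your argument is a valid, if considerably longer, alternative to the paper's proof.
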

\begin{proof}
Consider the diagonal map $(a \times \phi): X \dashrightarrow A \times \mathbb{P}^{P_2(X) - 1}$. We can assume this map is a morphism by blowing up $X$, and denote by $Z$ the image. If the theorem is not true, then $X \rightarrow Z$ is not birational. Note that $a$ factors through $(a \times \phi): X \rightarrow Z$, so $(a \times \phi)^*: \mathrm{Pic}^0(Z) \rightarrow \mathrm{Pic}^0(X)$ is an embedding. Since $\chi(\omega_X) = 1$, Theorem \ref{euln} implies that $\chi(\omega_Z) = 0$, so $Z$ is birational to $A$ (\cite{BLNP}, Prop. 4.10). Therefore, $(a \times \phi): X \rightarrow Z$ is birational to $a: X \rightarrow A$, and $\phi$ factors through $a$ rationally. However, this contradicts Proposition \ref{cnm}.
\end{proof}

\subsection{Remarks and an example}\label{sre}

We remark the following:

(1) $\mathcal{V}$ is irreducible (Proposition \ref{ir}), it is expected that $\mathcal{V}_x$ is irreducible for general $x \in X$. If this is true, then by Theorem \ref{deg}, the bicanonical map $\phi$ is of degree 2. Precisely, using the idea of \cite{Zh}, we know that $\phi$ factors through an involution $\sigma$, and up to a translate on $A$, the quotient map $X \rightarrow X/(\sigma)$ fits into the following commutative diagram
\[\begin{CD}
X      @> >>      X/(\sigma) \\
@Va VV               @Va' VV \\
A       @> >>    A/((-1)_A)
\end{CD} \]

(2) For a primitive variety $X$, if we do not assume $A= \mathrm{Alb}(X)$ is simple, then $R^0\Phi_{\mathcal{P}_a} \omega_X \cong (\mathcal{E}(\mathcal{D}))^*$ where $\mathcal{E}$ is a vector bundle and $\mathcal{D}$ is a divisor on $\mathrm{Pic}^0 (X)$ (cf. \cite{BLNP} proof of Lemma 5.3). Assume that $\mathcal{E}(\mathcal{D})$ is an $IT^0$-vector bundle. Then $E = (R^0\Psi_{\mathcal{P}}\mathcal{E}(\mathcal{D}))^*$ is an $IT^0$-vector bundle on $A$. Similarly we can define $P, \hat{P}, \mathcal{K}$ and $\mathcal{V}$. For general $x \in X$, if $\mathcal{V}_x = \hat{\pi}^*(V^1_x + \cdots+V^r_x)$ as before, then by similar argument we can prove $\deg(\phi) \leq 2^r$. This bound is analogous to \cite{Zh} Corollary 3.4, and is optimal (cf. Example \ref{eg}). Stimulated by \cite{Zh} Theorem 1.2 and 3.5, it is expected that $A$ is decomposable if $r>1$ (Example \ref{eg} provides an evidence). So it is possible to give an upper bound to $\deg(\phi)$ relying on the numbers of the factors of $A$.

\begin{Example}\label{eg}
Let
\begin{itemize}
\item
$(A_i, \Theta_i) , i = 1,2,\cdots,r$ be $r$ simple p.p.a.v. and $A_{r+1}$ a simple abelian variety;
\item
$A = A_1 \times A_2 \times \cdots \times A_r \times A_{r+1}$;
\item
$B = p_1^*B_1 + \cdots + p_r^*B_r + p_{r+1}^*B_{r+1}$ where $B_i \in |2\Theta_i|$ is a smooth divisor on $A_i$ for $i =1,2,\cdots,r$ and $B_{r+1}\equiv 2D_{r+1}$ is a smooth very ample divisor on $A_{r+1}$;
\item
$Y \rightarrow A$ the double cover given by the relation $2L \equiv B$ where $L$ is a line bundle linearly equivalent to $p_1^*\Theta_1 + \cdots + p_r^*\Theta_r + p_{r+1}^*D_{r+1}$;
\item
$X \rightarrow Y$ a smooth resolution.
\end{itemize}
Note that $Y$ has at most canonical singularities since $B$ is a reduced simple normal crossing divisor. We denote by $\pi: X \rightarrow Y \rightarrow A$ the composed map which coincides with the Albanese map.

Immediately we have
\begin{itemize}
\item[(i)]
$\omega_X \equiv \pi^*L$, thus $\pi_*\omega_X \cong \omega_A \oplus L$ and $X$ is primitive;
\item[(ii)]
$\pi_*\omega_X^2 \cong L \oplus \mathcal{O}_A(B)$, and the linear system $|B|$ defines a morphism of degree $2^r$ on $A$;
\item[(iii)]
$E \cong L$ is a line bundle, $P = A$ and $R\Phi_{\mathcal{P}} E$ is a vector bundle of rank $\chi(X, \omega_X) = \chi(A_{r+1}, \mathcal{O}_{A_{r+1}}(D_{r+1}))$.
\end{itemize}

We can prove that (with details left to readers)
\begin{itemize}
\item[(1)]
for general $x \in X$, $\mathcal{K}_x = \mathcal{H}_x + \mathcal{V}_x^1 + \mathcal{V}_x^2 + \cdots + \mathcal{V}_x^r$ where $\mathcal{V}_x^i$ is the pull-back of the divisor on $\hat{A}_i$ parametrizing the theta divisors passing through $p_i (\pi (x))$ via the projection $\hat{P} \rightarrow \hat{A} \rightarrow \hat{A}_i$ for $i = 1,2,\cdots,r$;
\item[(2)]
the degree of the bicanonical map of $X$ is $2^r$.
\end{itemize}
\end{Example}

\section{Appendix: An inequality on the fibrations of irregular varieties}\label{inequ}
Let $f:S\rightarrow C$ be a fibration of a smooth surface and $F$ a general fiber. Beauville proved that $q(S) \leq q(F) + q(C)$; and if $q(F) \geq 2$, then the equality is attained if, and only if $S$ is birational to $C \times F$ (\cite{Beau}).

For arbitrary dimensional case, we have
\begin{Theorem}\label{ineq}
Let $f: X \rightarrow Y$ be a fibration between two smooth projective varieties and $F$ a general fiber. Then $q(X) \leq q(Y) + q(F)$, and the kernel of the restriction map $r: \mathrm{Pic}^0(X) \rightarrow \mathrm{Pic}^0(F)$ contains $f^*\mathrm{Pic}^0(Y)$, as the whole component passing through the identity point $\hat{0} \in \mathrm{Pic}^0(X)$.
\end{Theorem}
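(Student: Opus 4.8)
The plan is to dualize the statement and reduce it to a single geometric identity between abelian subvarieties of $\mathrm{Alb}(X)$. Let $i: F \hookrightarrow X$ be the inclusion of a general (hence smooth) fiber. Functoriality of the Albanese map produces homomorphisms $i_*: \mathrm{Alb}(F) \to \mathrm{Alb}(X)$ and $\mathrm{Alb}(f): \mathrm{Alb}(X) \to \mathrm{Alb}(Y)$, the latter surjective because $f$ is dominant. Under the canonical duality $\mathrm{Pic}^0(X) = \widehat{\mathrm{Alb}(X)}$ the restriction map $r$ is precisely $\widehat{i_*}$, while $f^*\mathrm{Pic}^0(Y)$ is the image of $\widehat{\mathrm{Alb}(f)}$. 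Since dualization is an exact anti-equivalence on abelian varieties, the identity component $\ker(r)^0$ is the dual of $\mathrm{Alb}(X)/\mathrm{Im}(i_*)$, whereas $f^*\mathrm{Pic}^0(Y)$, being a connected subtorus of dimension $q(Y)$, equals $\widehat{\mathrm{Alb}(X)/K}$ with $K := \ker(\mathrm{Alb}(f))^0$. Hence both halves of the theorem follow at once from the single identity
$$\mathrm{Im}\big(i_*: \mathrm{Alb}(F) \to \mathrm{Alb}(X)\big) = K = \ker\big(\mathrm{Alb}(f)\big)^0.$$

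I would establish the inclusion $\mathrm{Im}(i_*) \subseteq K$ first, as it is formal: the composite $\mathrm{Alb}(f)\circ i_*$ is induced by the constant map $f\circ i: F \to Y$, hence is zero, and $\mathrm{Im}(i_*)$ is connected. The reverse inclusion is the crux. Set $B := \mathrm{Alb}(X)/\mathrm{Im}(i_*)$ and consider $u: X \xrightarrow{\mathrm{alb}_X} \mathrm{Alb}(X) \to B$. By construction $u|_F$ factors through $\mathrm{Alb}(F) \to \mathrm{Alb}(X) \to B$, which is zero; moreover the subtorus $\mathrm{Im}(\mathrm{Alb}(F')\to\mathrm{Alb}(X))$ is independent of the general fiber $F'$ (a connected family of abelian subvarieties of fixed dimension is constant), so $u$ is in fact constant on \emph{every} general fiber of $f$.

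The main obstacle is the rigidity step: upgrading ``constant on the general fiber'' to an honest factorization $u = v\circ f$. Here I would argue that $(f,u): X \to Y\times B$ has image birational over $Y$, so by the normality of $Y$ and Stein factorization $u$ factors rationally through $f$; since $B$ is an abelian variety and $X,Y$ are smooth, the resulting rational map $v: Y \dashrightarrow B$ is automatically a morphism. Once $u = v\circ f$, the universal property of the Albanese map gives $\bar u = \bar v\circ \mathrm{Alb}(f)$ for the induced homomorphisms, so the quotient map $\mathrm{Alb}(X) \to B$ factors through $\mathrm{Alb}(f)$. As $\ker(\mathrm{Alb}(X)\to B) = \mathrm{Im}(i_*)$ by the definition of $B$, this yields $\ker(\mathrm{Alb}(f)) \subseteq \mathrm{Im}(i_*)$, hence $K \subseteq \mathrm{Im}(i_*)$.

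Finally I would read off the two conclusions from the equality $\mathrm{Im}(i_*) = K$. On dimensions it gives $q(X) - q(Y) = \dim K = \dim\mathrm{Im}(i_*) \le \dim \mathrm{Alb}(F) = q(F)$, that is, $q(X) \le q(Y) + q(F)$. On subtori, the dual of the equality $\mathrm{Im}(i_*) = K$ is exactly $\ker(r)^0 = \widehat{\mathrm{Alb}(X)/\mathrm{Im}(i_*)} = \widehat{\mathrm{Alb}(X)/K} = f^*\mathrm{Pic}^0(Y)$, which is precisely the assertion that $f^*\mathrm{Pic}^0(Y)$ is the whole component of $\ker r$ through $\hat{0}$. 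I expect the rigidity/factorization step of the third paragraph to require the most care, the rest being formal consequences of Albanese functoriality and Pontryagin-type duality for abelian varieties.
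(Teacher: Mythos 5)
Your proposal is correct, and it differs from the paper mainly in that it is self-contained where the paper's proof is essentially a citation: the paper obtains the inequality by running the proof of \cite{CH1} Cor.~3.5 (stated there for the Iitaka fibration) for an arbitrary fibration, and gets the assertion on $\ker(r)$ from \cite{CH1} Lemma~2.6~iii). The one nontrivial input the paper quotes from \cite{CH1} --- surjectivity of the natural map $\mathrm{A}(X_y)\to J$ --- is exactly your key identity $\mathrm{Im}\bigl(i_*:\mathrm{Alb}(F)\to\mathrm{Alb}(X)\bigr)=\ker\bigl(\mathrm{Alb}(f)\bigr)^0$, which you prove rather than cite; your dualization then recovers both assertions from it at once, just as the paper does implicitly. (The paper's alternative route, Beauville's argument \cite{Beau} together with \cite{Lan} Chap.~VIII Thm.~13, is the Picard-side mirror of your Albanese-side argument: there one shows that a line bundle in $\mathrm{Pic}^0(X)$ trivial on general fibers is pulled back from $Y$, whereas you show that a map to an abelian variety constant on general fibers factors through $Y$.) Two refinements to your crux step: you do not actually need constancy of $\mathrm{Im}\bigl(\mathrm{Alb}(F')\to\mathrm{Alb}(X)\bigr)$ as $F'$ varies, since constancy of $u$ on the single general fiber $F$ already forces $(f,u)(X)\to Y$ to be generically finite by upper semicontinuity of fiber dimension, and connectedness of the fibers of $f$ (it is a fibration) then gives birationality; and if you do want the family statement, over $\mathbb{C}$ it follows from Ehresmann local triviality over the smooth locus of $f$, because $\mathrm{Im}\bigl(H_1(F_y,\mathbb{Z})\to H_1(X,\mathbb{Z})\bigr)$ is then locally constant in $y$ and determines the subtorus. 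What your argument buys is independence from \cite{CH1} and a proof valid verbatim for any fibration; what the paper's buys is brevity.
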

\begin{proof}
The inequality has been proved in \cite{CH1} Cor. 3.5 for Iitaka fibration. We use the notation in \cite{CH1} Lemma 2.6 and Cor. 3.5 for convenience. Noticing that the natural map $\mathrm{A}(X_y) \rightarrow J$ is surjective by the proof of \cite{CH1} Lemma 2.6, the inequality $q(X) \leq q(Y) + q(F)$ is obtained by applying the proof of \cite{CH1} Cor. 3.5 straightforwardly. The remaining assertion follows from \cite{CH1} Lemma 2.6 iii).

Another approach is using Beauville's argument (\cite{Beau}) and \cite{Lan} Chap. VIII Theorem 13.
\end{proof}

\begin{Theorem}\label{cp}
Let $f: X \rightarrow Y$ be a fibration between two smooth projective varieties. Suppose that for general $y \in Y$, $\mathrm{Alb}(X_y)$ is a p.p.a.v., the general fiber $X_y$ is birational to a theta divisor $F_y \subset \mathrm{Alb}(X_y)$, and that $q(X) = q(X_y) + q(Y)$. Then $\mathrm{Alb}(X_y)$ is isomorphic to a p.p.a.v. $A$ independent of general $y \in Y$, $F_y\cong F$ where $F$ is a theta divisor on $A$, and $X$ is birational to $F \times Y$.
\end{Theorem}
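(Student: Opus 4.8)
The plan is to exploit the equality $q(X) = q(Y) + q(X_y)$ via the dual of Theorem \ref{ineq}, to build an auxiliary map from $X$ to a fixed abelian variety that restricts to the Albanese embedding on every general fibre, and finally to recognise $X$ as the graph of a family of translated theta divisors inside a trivial bundle. First I would record the consequence of the equality case. Dualising Theorem \ref{ineq}, the restriction $r : \mathrm{Pic}^0(X) \to \mathrm{Pic}^0(X_y)$ is surjective with $\ker(r)^0 = f^*\mathrm{Pic}^0(Y)$, so on Albanese varieties one obtains a short exact sequence
\[ 0 \to K \to \mathrm{Alb}(X) \to \mathrm{Alb}(Y) \to 0 \]
in which $K$ is the image of $\mathrm{Alb}(X_y) \to \mathrm{Alb}(X)$ and $\mathrm{Alb}(X_y) \to K$ is an isogeny. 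By the universal property of the Albanese, $\mathrm{alb}_X$ carries each general fibre $X_y$ into a translate $c(y) + K$ of the fixed subvariety $K$ (the ``direction'' $K$ being independent of $y$, since $\mathrm{Alb}(X)$ has only countably many abelian subvarieties and the family of images is algebraic), and the induced map $Y \to \mathrm{Alb}(X)/K = \mathrm{Alb}(Y)$ is, up to translation, the Albanese map of $Y$.

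Next I would prove the isotriviality. Over a suitable open $Y^o \subset Y$ the fibres assemble into a smooth family $X^o \to Y^o$ whose relative Albanese is a principally polarised abelian scheme $\mathcal{A} \to Y^o$ with fibre $A_y = \mathrm{Alb}(X_y)$ and a relative theta divisor. The isogenies $A_y \to K$ induced by $\mathrm{alb}_X$ vary algebraically, hence define a section of the Hom-scheme $\underline{\mathrm{Hom}}(\mathcal{A}, K \times Y^o)$; as this scheme is unramified over the connected base $Y^o$ (rigidity of homomorphisms of abelian schemes), the section is constant and produces a single isogeny $\mathcal{A} \to K \times Y^o$ over $Y^o$. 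Thus $\mathcal{A}$ is isogenous to a constant abelian scheme, its monodromy is trivial, and $\mathcal{A} \cong A \times Y^o$ for a fixed principally polarised abelian variety $A \cong A_y$; transporting the relative theta divisor gives $F_y \cong F$, the theta divisor of $A$. Equivalently one may invoke the finiteness, up to isomorphism, of principally polarised abelian varieties isogenous to the fixed $K$, together with constancy of the induced moduli map $Y^o \to \mathcal{A}_g$.

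Finally I would assemble the product. Composing the relative Albanese $X^o \to \mathcal{A}$ with the trivialisation $\mathcal{A} \cong A \times Y^o$ and projecting yields a rational map $\psi : X \dashrightarrow A$ whose restriction to a general fibre is, up to translation, the Albanese map $\mathrm{alb}_{X_y} : X_y \to A_y = A$, that is, the birational map onto the theta divisor $F$. The pair $(f, \psi) : X \dashrightarrow Y \times A$ is then birational onto its image, which is the family of translated theta divisors $\{(y, w) : w \in t(y) + F\}$; untwisting by the translation $t(y)$ identifies this image with $Y \times F$, so $X$ is birational to $F \times Y$.

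I expect the main obstacle to be the third step together with the monodromy control in the second: the delicate point is to pass from statements valid only ``up to isogeny'' (the maps $A_y \to K$, or a Poincar\'e-reducibility splitting of $\mathrm{Alb}(X) \to \mathrm{Alb}(Y)$) to an honest trivialisation in which the fibrewise map has degree one onto the theta divisor. It is precisely the globally defined Albanese map $X \to \mathrm{Alb}(X)$, compatible with the fixed sequence above, that rigidifies the family and forces the monodromy to be trivial; without it one would only obtain a twisted product $(F \times Y')/G$ after a finite base change $Y' \to Y$, and extra work would be needed to rule out the twist.
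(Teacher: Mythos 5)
Your first two steps are sound, and they constitute a genuinely different route from the paper's: after establishing (as you do) that $\mathrm{Alb}(X_y)\to K$ is an isogeny and that $(A,F)$ is constant, the paper does \emph{not} trivialize the family directly; it passes to an equivariant resolution, invokes Levine's structure theorem \cite{Le} for isotrivial families of general type to write $X$ birationally as $(\tilde F\times Z)/G$, and then kills the finite group $G$. Your replacement of this by abelian-scheme rigidity is legitimate: an abelian scheme isogenous to the constant scheme $K\times Y^o$ is itself constant (dualize the isogeny; its kernel is a finite \'etale closed subgroup scheme of the constant scheme $\hat K[N]\times Y^o$, hence constant over the connected reduced base), and the polarization is then constant because $\mathrm{NS}(A)$ is discrete. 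This avoids Levine entirely, which is a real simplification.

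The gap is in your third step, and it is exactly at the point where the theorem's content lies. The relative Albanese map of $X^o/Y^o$ does not land in the abelian scheme $\mathcal{A}$ but in a torsor $\mathcal{A}^1$ under $\mathcal{A}$; trivializing $\mathcal{A}\cong A\times Y^o$ does \emph{not} produce your map $\psi\colon X\dashrightarrow A$, so the object you propose to "untwist" has not yet been constructed. Moreover, the untwisting translation $t(y)$ is well defined and algebraic only because a \emph{principal} theta divisor has trivial stabilizer under translations, i.e.\ $\phi_F\colon A\to \hat A$, $a\mapsto t_a^*F-F$, is an isomorphism. This fact never appears in your argument, yet it is the crux: it is precisely the paper's final step ($t_a^*F=F\Rightarrow a=0$), and it is exactly what rules out the twisted products $(F\times Z)/G$ that you mention at the end. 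Your attribution of the rigidification to "the globally defined Albanese map $X\to\mathrm{Alb}(X)$" is also off target: that map only sends each fibre, with degree $\deg(\mathrm{Alb}(X_y)\to K)>1$ in general, onto a translate of the image of $F$ in $K$; lifting those maps through the isogeny $A\to K$ to degree-one maps onto $F$ \emph{is} the twisting problem, so the global Albanese map cannot by itself resolve it.

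The fix is short and merges your two final steps: let $\mathcal{F}\subset\mathcal{A}^1$ be the closure of the image of $X^o$ under the relative Albanese map; fibrewise $\mathcal{F}_y$ is a translate of $F$ (same polarization class by discreteness of $\mathrm{NS}(A)$). By principality there is a \emph{unique} point $s(y)\in\mathcal{A}^1_y$ such that $\mathcal{F}_y=s(y)+F$, and $y\mapsto s(y)$ is an algebraic section of $\mathcal{A}^1\to Y^o$ (the translates of $F$ form a component of the Hilbert scheme of $A$ isomorphic to $A$ itself). This section simultaneously trivializes the torsor and identifies $(\mathcal{A}^1,\mathcal{F})\cong(A\times Y^o,\,F\times Y^o)$; composing $X^o\to\mathcal{A}^1\cong A\times Y^o$ with the projection to $A$ then gives your $\psi$, and $(f,\psi)$ is birational onto $Y\times F$. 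With this insertion your proof is complete and genuinely different from the paper's.
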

\begin{proof}
By Theorem \ref{ineq} and the assumption $q(X) = q(X_y) + q(Y)$, the restriction map $\mathrm{Pic}^0(X)/f^*\mathrm{Pic}^0(Y) \rightarrow \mathrm{Pic}^0(X_y)$ is an epimorphism between two abelian varieties of the same dimension, and the kernel consists of finitely many torsion points which is independent of general $y$. Then we can see that $\mathrm{Pic}^0(X_y)$ is independent of general $y$ up to isomorphisms, so is its dual $\mathrm{Alb}(X_y)$. We can assume $\mathrm{Alb}(X_y) \cong A$, and $A$ has a theta divisor $F$ such that $F_y\cong F$.

Note that $f: X \rightarrow Y$ has a birational model $f': X' \rightarrow Y$ such that the general fibers are all isomorphic to $F$. Take an equivariant resolution $\tilde{f}: \tilde{X} \rightarrow Y$ of $f': X' \rightarrow Y$ (\cite{Ka} p.14), whose general fibers are smooth and isomorphic to each other. Let $\tilde{F}$ be a general fiber of $\tilde{f}$. Since $\tilde{F}$ is of general type, using \cite{Le} Proposition 1, we know that $\tilde{f}: \tilde{X} \rightarrow Y$ is birational to $(\tilde{F} \times Z)/G \rightarrow Z/G$, where $G$ is a finite group and the action of $G$ on the product $\tilde{F} \times Z$ is compatible with the actions on each factor. The action $G$ on $\tilde{F}$ descends to $F$, and since $q(X) = q(F) + q(Y)$, $G$ induces a trivial action on $H^1(F, \mathcal{O}_F)$.

If we can show $G$ acts on $F$ trivially, then we are done. From now on fix the Albanese map $a: F \rightarrow A$, and take $\sigma \in G$. By the universal property of Albanese maps, we obtain the following commutative diagram

$$\centerline{\xymatrix{
&F   \ar[d]^{a} \ar[r]^\sigma     &F  \ar[d]^{a}\\
&A   \ar[r]^{\tilde{\sigma}}   &A
}}$$
i.e., $\sigma$ extends to an isomorphism $\tilde{\sigma}$ of $A$ fixing $F$.

Since $\sigma$ acts trivially on $H^1(F, \mathcal{O}_{F})$, $\tilde{\sigma}$ acts trivially on $H^1(A, \mathcal{O}_{A})$, so it is nothing but a translate $t_a$ for some $a \in A$. Since $F$ is a theta divisor, the morphism $\phi_{F}: A \rightarrow \mathrm{Pic}^0(A)$ via $a' \mapsto t_{a'}^*F - F$ is an isomorphism. Then since $\tilde{\sigma}$ fixes $F$, we have $t_a^*F = F$, thus $a = 0$, this means $\sigma$ is identity.
\end{proof}

\end{document}